\providecommand\@dotsep{5}
\def\listtodoname{List of Todos}
\def\listoftodos{\@starttoc{tdo}\listtodoname}
\numberwithin{equation}{section}
\newtheorem{theorem}{Theorem}[section]
\newtheorem{proposition}[theorem]{Proposition}
\newtheorem{lemma}[theorem]{Lemma}
\newtheorem{corollary}[theorem]{Corollary}
\newtheorem{remark}{Remark}
\begin{document}

\title[Multiple solutions FOR A CLASS OF QUASILINEAR...]
{Multiple solutions  FOR A CLASS OF QUASILINEAR PROBLEMS WITH DOUBLE CRITICALITY}
\author{KARIMA AIT-MAHIOUT, CLAUDIANOR O. ALVES AND PRASHANTA GARAIN}

\address[Karima Ait-Mahiout]
{\newline\indent  Laboratoire "Th\'eorie du point fixe et Applications",
\newline\indent \'Ecole Normale Sup\'erieure, BP 92,
\newline\indent Kouba, 16006, Algiers, ALGERIA
\newline\indent
Email: {\tt karima\_ait@hotmail.fr} }

\address[Claudianor O. Alves ]
{\newline\indent Unidade Acad\^emica de Matem\'atica
\newline\indent
Universidade Federal de Campina Grande
\newline\indent
58429-970, Campina Grande - PB, Brazil 
\newline\indent
Email: {\tt coalves@mat.ufcg.edu.br} }

\address[Prashanta Garain ]
{\newline\indent Department of Mathematics
	\newline\indent
Ben-Gurion University of the Negev
	\newline\indent
P.O.B. 653
\newline\indent
Beer Sheva 8410501, Israel
\newline\indent
Email: {\tt pgarain92@gmail.com} }

\pretolerance10000

\begin{abstract}
We establish multiplicity results for the following class of quasilinear problems
$$
\left\{
\begin{array}{l}
-\Delta_{\Phi}u=f(x,u) \quad \mbox{in} \quad  \Omega, \\
u=0 \quad  \mbox{on} \quad  \partial \Omega,
\end{array}
\right.
\leqno{(P)}
$$
where $\Delta_{\Phi}u=\text{div}(\varphi(x,|\nabla u|)\nabla u)$ for a generalized N-function $\Phi(x,t)=\int_{0}^{|t|}\varphi(x,s)s\,ds$. We consider $\Omega\subset\mathbb{R}^N$ to be a smooth bounded domain that contains two disjoint open regions $\Omega_N$ and $\Omega_p$ such that $\overline{\Omega_N}\cap\overline{\Omega_p}=\emptyset$. The main feature of the problem $(P)$ is that the operator $-\Delta_{\Phi}$ behaves like $-\Delta_N$ on $\Omega_N$ and $-\Delta_p$ on $\Omega_p$. We assume the nonlinearity $f:\Omega\times\mathbb{R}\to\mathbb{R}$ of two different types, but both behaves like $e^{\alpha|t|^\frac{N}{N-1}}$ on $\Omega_N$ and $|t|^{p^*-2}t$ on $\Omega_p$ as $|t|$ is large enough, for some $\alpha>0$ and $p^*=\frac{Np}{N-p}$ being the critical Sobolev exponent for $1<p<N$. In this context, for one type of nonlinearity $f$, we provide multiplicity of solutions in a general smooth bounded domain and for another type of nonlinearity $f$, in an annular domain $\Omega$, we establish existence of multiple solutions for the problem $(P)$ that are nonradial and rotationally nonequivalent. 
\end{abstract}

%\thanks{Corresponding author: Prashanta Garain}
\thanks{ C. O. Alves was partially
supported by  CNPq/Brazil 304804/2017-7 }
\subjclass[2010]{35A15; 35J62, 46E30}
\keywords{Variational methods, Quasilinear problems, Musielak-Sobolev space}

\maketitle

\section{Introduction}

In this paper, we establish the existence of multiple solutions for the following class of quasilinear problems
$$
\left\{
\begin{array}{l}
	-\Delta_{\Phi}u=f(x,u) \quad \mbox{in} \quad  \Omega, \\
	u=0 \quad  \mbox{on} \quad  \partial \Omega,
\end{array}
\right.
\leqno{(P)}
$$
where $\Omega \subset \mathbb{R}^N$, with $N \geq 2$, is a smooth bounded domain, $\Delta_{\Phi}u={\rm div}\,(\varphi(x,|\nabla u|)\nabla u)$ is the $\Phi$ Laplace operator, where $\Phi(x,t)=\int_{0}^{|t|}\varphi(x,s)s\,ds, \varphi: \Omega \times [0,+\infty)  \to [0,+\infty) $ and $f: \Omega \times \mathbb{R}  \to \mathbb{R} $ are continuous functions that satisfy some hypothesis that will be mentioned later on.

Before proceeding further, let us go through some known results associated with the $\Phi$ Laplace equations. In the recent past, the study of such equations concerning the existence theory has been a research topic of considerable attention. Such operator extends the $p$-Laplace operator, the variable exponent $p$-Laplace operator, weighted $p$-Laplace operator, $p,q$-Laplace operator and indeed occur in many physical phenomena.

When $\Phi$ is independent of $x$, solutions of $(P)$ are investigated in the Orlicz-Sobolev space \cite{PKJF} and we refer the reader to Alves, Figueiredo and Santos \cite{TMNA2014},  Fukagai, Ito and Narukawa \cite{FN}, Carvalho,  Silva, Gon\c calves and Goulart \cite{MSGC}, Fukagai and Narukawa \cite{FN2},  Harjulehto and H\"{a}st\"{o}  \cite{HH},   and their  references for the study of such PDEs. When $\Phi$ also depends on $x$, we are led to study the problems in variable exponent Sobolev \cite{die,KR} or Musielak-Sobolev spaces \cite{Hud, Iwona, Mu, PKJF}. Differential equations in variable exponent Sobolev spaces have been studied extensively in the last years,  most part of them involves the $p(x)$-Laplacian operator, see for example, Alves and Barreiro \cite{AlvesBarreiro}, Alves and Ferreira \cite{AlvesFerreira}, Alves and Souto \cite{AlvesSouto}, Alves and R\u{a}dulescu \cite{AlvesR}, Chabrowski and Fu \cite{chabrowki}, Fan and Zhang \cite{FanZhang}, Fan \cite{Fansub-super},  R\u{a}dulescu and Repov\v{s}
\cite{radrep} and their references. However, Differential equations in general Musielak-Sobolev spaces have been studied very little, see for instance, Azroul, Benkirane,  Shimi and Srati \cite{ABSS}, Benkirane and Sidi El Vally \cite{BSEL2}, Fan \cite{Fan}, Liu and Zhao \cite{LH}, Wang and Liu \cite{WL} and the references therein.

In  the present paper, we will apply some recent results involving the Musielak-Sobolev spaces to study the existence of nontrivial solution for the problem $(P)$. Next, we will state our main hypothesis on the functions $\Phi$ and $\varphi$:
\begin{enumerate}
	\item[$(\varphi_1)$] For each $x \in \Omega$, $\varphi(x,.)$ is a $C^1$ function in the interval $(0,+\infty)$.
	\item[$(\varphi_2)$] $\varphi(x,t)$, $\partial_{t}(\varphi(x,t)t)>0$, for $x \in \Omega$ and $t>0$.
	\item[$(\varphi_3)$] There exists $1<p<N<q<p^*$ such that
	\[
	p\leq \frac{\varphi(x,|t|)|t|^{2}}{\Phi(x,|t|)}\leq q,  \,\,\, \mbox{for} \quad x \in \Omega \quad \mbox{and} \quad t \not= 0.
	\]
\end{enumerate}
\noindent Arguing as in  Fukagai, Ito and Narukawa \cite{FN}, it is possible to prove with few modifications that if $\varphi$ satisfies the conditions $(\varphi_1)-(\varphi_3)$, then the function $\Phi$ is a generalized N-function.

The complementary function $\widetilde{\Phi}$ associated with $\Phi$ is given
by the Legendre's transformation, that is,
\begin{equation} \label{tildePhi}
	\widetilde{\Phi}(x,s) = \max_{t\geq 0}\{ st - \Phi(x,t)\}, \quad x \in \Omega  \quad  \mbox{and} \quad s \in \mathbb{R}.
\end{equation}
The functions $\Phi$ and $\widetilde{\Phi}$ are complement of each other and $\widetilde{\Phi}$ is also a generalized N-function. Hereafter, we also assume that for some constant $d_1$,
\begin{enumerate}
	\item[$(\varphi_4)$] $\displaystyle \inf_{x \in \Omega}\Phi(x,1)=d_1>0$.
	\item[$(\varphi_5)$] For each $t_0 \not=0$, there is a constant $c_0>0$ such that
	$$
	\frac{\Phi(x,t)}{t} \geq c_0 \quad \mbox{and} \quad  \frac{\tilde{\Phi}(x,t)}{t} \geq c_0 \quad \mbox{for} \quad t \geq t_0 \quad \mbox{and} \quad x \in \Omega.
	$$
\end{enumerate}

The conditions $(\varphi_1)-(\varphi_5)$ are very important in our approach, because they permit us to conclude that the Musielak-Orlicz space $L^{\Phi}(\Omega)$ and Musielak-Sobolev space $W^{1,\Phi}(\Omega)$ are reflexive and separable Banach spaces, for more details see Section 2.

In a recent paper, Alves, Garain and R\u{a}dulescu \cite{AGR} proved the existence of at least one nontrivial solution for the following prototype problem
$$
\left\{
\begin{array}{l}
	-\Delta_{\Phi}u=f(x,u) \quad \mbox{in} \quad  \Omega, \\
	u=0 \quad  \mbox{on} \quad  \partial \Omega,
\end{array}
\right.
\leqno{(Q)}
$$
provided $\Omega$ is a smooth bounded domain in $\mathbb{R}^N$ with $N\geq 2$, $f$ is a continuous function, $\varphi:\Omega\times[0,+\infty)\to[0,+\infty)$ satisfies the hypothesis $(\varphi_1)-(\varphi_5)$ above (see \cite{AGR} for precise assumptions).

In the present paper, as in \cite{AGR}, $\Omega$ satisfies the following conditions: There are three smooth domains $\Omega_N, \Omega_q, \Omega_p \subset \Omega$
with nonempty interior such that
$$
\Omega=\Omega_N \cup \Omega_q \cup \Omega_p 
$$
and there is $\delta>0$ such that 
$$
(\overline{\Omega_N})_{\delta} \cap (\overline{\Omega_p})_{\delta}=\emptyset.
$$ 
Hereafter, if $A \subset \Omega$, we denote by $A_\delta$ to be the $\delta$-neighborhood of $A$ restricted to $\Omega$, that is,
$$
A_\delta=\{x \in \Omega\,:\, {\rm dist}\,(x,A)<\delta\}.
$$
Associated with the sets  $\Omega_N, \Omega_q$ and $\Omega_p$, there are three continuous functions $\eta_N,\eta_q,\eta_p:\overline{\Omega} \to [0,1]$ satisfying:
$$
\eta_N(x)=1, \quad \forall x \in \overline{\Omega_N},
$$
$$
\eta_p(x)=1, \quad \forall x \in \overline{\Omega_p},
$$
and
$$
\eta_q(x)=1, \quad \forall x \in \Omega_q= {\Omega} \setminus \overline{(\Omega_N \cup \Omega_p)},
$$

$$
\eta_N(x)=0, \;\; \forall x \in (\overline{\Omega_N})^c_{\delta}, \;\; \eta_p(x)=0, \quad \forall x \in (\overline{\Omega_p})^c_{\delta}, \;\; \eta_q(x)>0, \quad \forall x \in (\overline{\Omega_q})_{\delta}, \eta_q(x)=0, \;\; \forall x \in (\overline{\Omega_q})^c_{\delta}
$$ 
and  for some positive constant $c_4$,
$$
\eta_q(x)\leq c_4 \text{dist}(x, \partial(\Omega_q)_{\delta} \cap \Omega_p)^{l}, \quad \forall x \in \overline{\Omega_p} \cap (\Omega_q)_{\delta},
$$
where $l>q$ and $\text{dist}(x, \partial(\Omega_q)_{\delta} \cap \Omega_p)=\inf\{|x-y|\,:\,y \in \partial(\Omega_q)_{\delta} \cap \Omega_p\}$.

Related to the function $f:\overline{\Omega} \times \mathbb{R} \to \mathbb{R}$ we assume that it is a continuous function of  one the the following forms:
$$
f(x,t)=\lambda\eta_N(x)|t|^{\beta-2}te^{\alpha|t|^{\frac{N}{N-1}}}+\mu\tilde{\eta}_{q}(x)g(x,t)+\eta_p(x)(\tau|t|^{\zeta-2}t+|t|^{p^*-2}t), \quad \forall\,(x,t) \in {\Omega} \times \mathbb{R},
\leqno{(f_1)}
$$
or
$$
f(x,t)=\lambda \eta_N(x)|t|^{\beta-2}te^{\alpha|t|^{\frac{N}{N-1}}}+\tilde{\eta}_{q}(x)g(x,t)+\eta_p(x)|t|^{p^*-2}t, \quad \forall\,(x,t) \in {\Omega} \times \mathbb{R},
\leqno{(f_2)}
$$
where $\lambda,\mu,\tau$ are positive parameters, $\alpha>0$, $p^*>\zeta>q>N>p>\frac{N}{2}$,\,$\beta>q$, where $p^*=\frac{Np}{N-p}$,  $g:\overline{\Omega} \times \mathbb{R} \to \mathbb{R}$ and  $\tilde{\eta}_q: \overline{\Omega}\to [0,1]$ are continuous functions such that
$$
\tilde{\eta}_q(x)=1, \quad\,\forall x \in \Omega_q= {\Omega} \setminus \overline{(\Omega_N \cup \Omega_p)}
$$
and
$$
\tilde{\eta}_q(x)=0, \quad \forall\,x \in (\overline{\Omega_q})^c_{\delta/2}.
$$

Related to the function $g$, we assume the following conditions: 
$$
g \quad \mbox{is odd with respect to the second variable }t, \leqno{(g_0)}
$$
$$
g(x,t)=o(|t|^{q_1-1}), \quad \mbox{as} \quad t \to 0, \quad \mbox{uniformly in} \quad x \in (\overline{\Omega_q})_{\delta/2}  \leqno{(g_1)}
$$
for some $q_1>q$ and there is $\theta>q$ such that
$$
0<\theta G(x,t)\leq g(x,t)t, \quad \forall\,x \in (\overline{\Omega}_q)_{\delta/2} \leqno{(g_2)}
$$
where $G(x,t)=\int_{0}^{t}g(x,s)\,ds$, for $t \in \mathbb{R}$.

There exists a constant $c>0$, such that
$$
g(x,t)\geq ct^{q_2-1},\quad \forall t\geq 0,\,\forall x\in\Omega_q, \leqno{(g_3)}
$$
for some $q_2>q_1,$

With these notations, we are ready to mention the last conditions on $\varphi$. If $f$ is the form $(f_1)$, we assume for each $t>0$ the following:
\begin{enumerate}
	\item[$(\varphi_6)$] $\varphi(x,t) \geq t^{N-2}, \quad \mbox{for} \quad x \in \Omega_N \quad \mbox{and} \quad c_1 t^{N-2} \geq \varphi(x,t), \quad x \in \Omega_N \setminus \overline{(\Omega_q)_{\delta}}.$
	\item[$(\varphi_7)$] $ \varphi(x,t) \geq \tau_1(x)t^{q-2}, \quad \mbox{for} \quad x \in (\Omega_q)_{\delta}$ where $\tau_1:\overline{\Omega} \to \mathbb{R}$ is a continuous function satisfying:
	$$
	\tau_1(x)>0, \quad \forall x \in (\Omega_q)_{\delta} \quad \mbox{and} \quad \tau_1(x)=0, \quad \forall x \in  ((\Omega_q)_{\delta})^c.
	$$
	\item[$(\varphi_8)$] $ \tau_2(x)t^{q-2}+c_2t^{p-2} \geq \varphi(x,t) \geq t^{p-2}, \quad x \in \Omega_p$  where $\tau_2:\overline{\Omega_p} \to \mathbb{R}$ is a nonnegative continuous function satisfying:
	$$
	\tau_2(x)\leq c_3 {\rm dist}(x, \partial(\Omega_q)_{\delta} \cap \Omega_p)^{s}, \quad \forall x \in \overline{\Omega_p} \cap (\Omega_q)_{\delta}
	$$ 
	for some  $s>q$ and 
	$$
	\tau_2(x)=0, \quad \forall x \in \overline{\Omega_p} \setminus \overline{(\Omega_q)_{\delta}},
	$$
	for some constants $c_i>0$ with $i=1,2,3.$
\end{enumerate}
Now, if $f$ is the form $(f_2)$, the condition $(\varphi_6)$ is assumed of the following way: 
\begin{enumerate}
	\item[$(\varphi_6)$] $\varphi(x,t) \geq t^{N-2}, \quad \mbox{for} \quad x \in \Omega_N.$
\end{enumerate}

As a model of a function that satisfies the conditions $(\varphi_1)-(\varphi_8)$ is the function \linebreak $\varphi:{\Omega} \times [0,+\infty) \to [0,+\infty)$ defined by
\begin{equation} \label{vaphi0}
	\varphi(x,t)=\eta_N(x)t^{N-2}+\eta_{q}(x)t^{q-2}+\eta_{p}(x)t^{p-2}, \quad \forall\,(x,t) \in {\Omega} \times [0,+\infty)
\end{equation}
and so,
\begin{equation} \label{Phi}
	\Phi(x,t)=\frac{\eta_N(x)}{N}|t|^{N}+\frac{\eta_{q}(x)}{q}|t|^{q}+\frac{\eta_{p}(x)}{p}|t|^{p}, \quad \forall\, (x,t)\in {\Omega} \times \mathbb{R}.
\end{equation}

Motivated by the study made in \cite{AGR}, we intend to prove existence of multiple solutions for the problem $(Q)$ working with the same operator under the nonlinearities $(f_1)$ and $(f_2)$. Here we have two main results involving multiple solutions and their motivation are mentioned below. 

Our first main result is motivated by the study made in Wei and Wu \cite{ZX92}, where the authors showed the existence of multiple solutions for the following class of problems involving the $p$-Laplacian operator
\begin{equation} \label{WeiWu}
	\left\{	\begin{array}{l}
		-	div(|\nabla u|^{p-2}\nabla u)=f(x,u)+\lambda |u|^{p^*-2}u \quad\text{in }\Omega,\\
		u=0 \quad\text{on } \partial\Omega,
	\end{array}
	\right.
\end{equation}
where $\Omega$ is a bounded domain, $\lambda$ is a positive parameter and $f$ is a continuous function with subcritical growth and $p^*=\frac{Np}{N-p}$ for $N > p$. Using a version of an abstract theorem due to Ambrosetti and Rabinowitz \cite{AmbrosettiRabinowitz} that involves the genus theory for $C^{1}$ even functional, it was proved that given $n \in \mathbb{N}$, there is $\lambda_*=\lambda_*(n)>0$ such that problem (\ref{WeiWu}) has at least $n$ nontrivial solutions for $\lambda \in (0, \lambda_*)$. In \cite{SX2003}, Silva and Xavier improved the main results proved in \cite{ZX92}.

Here, we proved a version of the above mentioned result for the problem $(P)$ and the statement of our result is the following.

\begin{theorem} \label{T1}
	Assume $(g_0)-(g_3), (f_1)$ and  $(\varphi_1)-(\varphi_8)$. Then, for each $k\in\mathbb{N}$, there exists positive real numbers $\lambda_k,\mu_k$ and $\tau_k$ such that for $\lambda\geq\lambda_k,\mu\geq\mu_k$ and $\tau\geq\tau_k$, the problem $(P)$ has at least $k$ pairs of nontrivial solutions.
\end{theorem}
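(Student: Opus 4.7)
The plan is to apply a symmetric minimax principle based on Krasnoselskii's genus to the Euler--Lagrange functional
$$I(u)=\int_\Omega\Phi(x,|\nabla u|)\,dx-\int_\Omega F(x,u)\,dx,\qquad u\in W_0^{1,\Phi}(\Omega),$$
where $F(x,t)=\int_0^t f(x,s)\,ds$. Hypothesis $(g_0)$ together with the odd powers appearing in $(f_1)$ makes $F(x,\cdot)$ even, so $I$ is an even $C^1$ functional on $W_0^{1,\Phi}(\Omega)$. The abstract tool will be a genus-based version of the Ambrosetti--Rabinowitz symmetric mountain pass theorem, in the spirit of Wei--Wu \cite{ZX92} and Silva--Xavier \cite{SX2003}, producing minimax values
$$c_k=\inf_{A\in\Sigma_k}\sup_{u\in A}\tilde I(u),$$
where $\Sigma_k$ denotes the class of closed symmetric subsets of $W_0^{1,\Phi}(\Omega)\setminus\{0\}$ of Krasnoselskii genus at least $k$, and $\tilde I$ is a suitable truncation of $I$. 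The target is to show that each $c_k$ is actually a critical value of $I$ provided $\lambda,\mu,\tau$ are large enough.

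The first technical step is the local Palais--Smale analysis. Because $(f_1)$ simultaneously carries Trudinger--Moser exponential growth on $\Omega_N$ and Sobolev-critical polynomial growth on $\Omega_p$, $I$ cannot satisfy $(PS)$ at arbitrary levels. Exploiting the separation $(\overline{\Omega_N})_\delta\cap(\overline{\Omega_p})_\delta=\emptyset$ and the behaviour of $\varphi$ given by $(\varphi_6)$ and $(\varphi_8)$, I would decouple any $(PS)_c$ sequence into its contributions on $\Omega_N$ and $\Omega_p$, ruling out loss of compactness on the former by sharp Trudinger--Moser estimates in $W_0^{1,N}$ and on the latter by a Brezis--Lieb / Lions concentration-compactness argument in $W_0^{1,p}$. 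This produces an explicit threshold $c^\ast>0$, depending only on $N,p,\alpha$ and the geometry of $\Omega_N,\Omega_p$, below which $(PS)_c$ holds. To obtain compactness at every level I would then follow Silva--Xavier and modify $I$ outside a large ball by a smooth cutoff, producing a functional $\tilde I$ which satisfies $(PS)_c$ for every $c\in\mathbb{R}$ and whose critical points of energy less than $c^\ast$ are critical points of $I$.

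To force $c_k<c^\ast$, I would choose for each $k$ a $k$-dimensional subspace $E_k\subset W_0^{1,\Phi}(\Omega)$ spanned by smooth functions whose supports sample the three regions $\Omega_N,\Omega_q,\Omega_p$ so that each parameter-dependent nonlinearity is active. On the finite-dimensional $E_k$ all norms are equivalent, and combining $(\varphi_3)$ with $(g_3)$ (which yields $G(x,t)\ge (c/q_2)|t|^{q_2}$ on $\Omega_q$ with $q_2>q$) and the pointwise lower bounds for the $|t|^\beta$ and $|t|^\zeta$ contributions yields an estimate of the form
$$\sup_{u\in E_k,\,\|u\|=\rho}I(u)\le C_1(k)\bigl(\rho^{p}+\rho^{q}\bigr)-\lambda\,C_2(k)\rho^{\beta}-\mu\,C_3(k)\rho^{q_2}-\tau\,C_4(k)\rho^{\zeta},$$
with $\beta,q_2,\zeta>q>p$. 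Choosing a suitable $\rho=\rho_k$ and then taking $\lambda,\mu,\tau$ sufficiently large furnishes thresholds $\lambda_k,\mu_k,\tau_k$ such that for $\lambda\ge\lambda_k$, $\mu\ge\mu_k$ and $\tau\ge\tau_k$ the right-hand side is strictly less than $c^\ast$ (in fact negative). Since the sphere of radius $\rho_k$ in $E_k$ belongs to $\Sigma_k$ by standard genus theory, one concludes $c_k<c^\ast$.

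The genus-based deformation lemma applied to $\tilde I$ then delivers $k$ critical values $c_1\le\cdots\le c_k<c^\ast$ of $\tilde I$, each producing a pair $\{\pm u_i\}$ of nontrivial critical points by evenness, with standard genus arguments handling multiplicities when some $c_j$ coincide; all lie below the compactness threshold and therefore solve $(P)$. The main obstacle, which is where the double criticality truly matters, is the local $(PS)_c$ analysis: handling the Trudinger--Moser critical term on $\Omega_N$ and the Sobolev critical term on $\Omega_p$ within a single Musielak--Sobolev space $W_0^{1,\Phi}(\Omega)$ requires a delicate disjoint-support decomposition reducing matters on each region to the classical $N$- and $p$-Laplacian critical problems, together with a verification via $(\varphi_3)$, $(g_2)$ and the conditions $\beta,\zeta>q$ that every $(PS)_c$ sequence is uniformly bounded in $W_0^{1,\Phi}(\Omega)$.
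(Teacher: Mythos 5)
Your overall strategy (genus-based symmetric minimax, local Palais--Smale below a double-critical threshold determined by the Trudinger--Moser constant on $\Omega_N$ and the Sobolev constant on $\Omega_p$, and finite-dimensional level estimates pushed below the threshold by taking $\lambda,\mu,\tau$ large) is the right one and matches the paper's plan. However, two of the structural steps you propose do not work as stated.

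\textbf{The truncation step is flawed.} You propose to ``modify $I$ outside a large ball by a smooth cutoff, producing a functional $\tilde I$ which satisfies $(PS)_c$ for every $c\in\mathbb{R}$.'' Cutting $I$ off for $\|u\|$ large cannot restore the Palais--Smale condition: the failure of compactness in the presence of the exponential term on $\Omega_N$ and the $|u|^{p^*-2}u$ term on $\Omega_p$ occurs for \emph{bounded} Palais--Smale sequences (concentration of the gradient at points, or loss of mass at the Trudinger--Moser level), and a radial cutoff in $W_0^{1,\Phi}(\Omega)$ leaves the functional untouched precisely on the region where the concentration happens. The Wei--Wu and Silva--Xavier references you invoke do not truncate either; they, like the present paper, use a genus version of Ambrosetti--Rabinowitz (here Theorem~\ref{AmRbthm}) which only requires $(PS)_c$ for $0<c<M$, together with the estimate that all the relevant minimax levels fall below $M$. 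The paper establishes that local $(PS)$ through Lemma~\ref{boundedness}, Corollary~\ref{C1} (which gives the uniform bound $\|\nabla u_n\|_{L^N(\Omega_N)}^{N'}\leq\tau\alpha_N/(2^{N'}\alpha)$ needed to invoke the Cianchi--Trudinger--Moser inequality and Corollary~\ref{TM6}) and Lemma~\ref{PS} (which handles $\Omega_p$ with the Medeiros/Lions concentration-compactness Lemma~\ref{concentration}). Your decoupling idea is correct in spirit, but the truncation is an unnecessary and in fact unworkable detour; you should drop it and work with $I$ itself and the threshold $M$.

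\textbf{The minimax class is too large.} Taking $c_k=\inf_{A\in\Sigma_k}\sup_A\tilde I$ over \emph{all} closed symmetric sets of genus $\geq k$ does not automatically give $c_k\geq\alpha>0$, especially once you have made $\tilde I$ coercive or bounded by truncation; sets of high genus can sit far from the origin where the functional is small or negative. The paper's class $\Gamma_j$ in \eqref{gamma} is built from odd maps $h\in G_m$ that fix $\partial B_{R_m}\cap X_m$, precisely so that every competitor links the mountain-pass sphere $\|u\|=\rho$ and the levels are pinned above the $\alpha$ from Lemma~\ref{LMP}\,$(i)$. You need such a linked class (or the equivalent $\gamma(A\cap S_\rho)\geq k$ constraint); pure genus is not enough.

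\textbf{A smaller imprecision.} The level estimate you write,
$$\sup_{u\in E_k,\,\|u\|=\rho}I(u)\le C_1(k)(\rho^{p}+\rho^{q})-\lambda C_2(k)\rho^{\beta}-\mu C_3(k)\rho^{q_2}-\tau C_4(k)\rho^{\zeta},$$
implicitly asks each $C_i(k)$ to be a positive uniform lower bound over the unit sphere in $E_k$, which is false: a nonzero $u\in E_k$ may vanish identically on $\Omega_N$, say, so $\inf_{\|u\|=1}\int_{\Omega_N}|u|^\beta=0$. What is true (and what the paper proves in Lemma~\ref{level} by a compactness argument exploiting $\Omega=\Omega_N\cup\Omega_q\cup\Omega_p$) is that the \emph{sum} $\int_{\Omega_N}|u|^\beta+\int_{\Omega_q}|u|^{q_2}+\int_{\Omega_p}|u|^\zeta$ has a positive lower bound $C$ on the unit sphere of $E_k$; one then factors out $\chi_1=\min\{\lambda,\mu,\tau\}$ and $\|u\|^{l}$ with $l=\min\{\beta,q_2,\zeta\}$ or $\max\{\beta,q_2,\zeta\}$ to land on the single-variable function $w(t)=t^p+t^q-C\chi_1 t^l$ whose maximum tends to $0$ as $\chi_1\to\infty$. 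With that repair, and with the first two points fixed, the remainder of your argument (evenness from $(g_0)$, boundedness of $(PS)$ sequences from $(\varphi_3)$ and the Ambrosetti--Rabinowitz inequality \eqref{AR1}, and the multiplicity bookkeeping when levels coincide) is in line with the paper's proof.
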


Our second result involves the existence of many rotationally nonequivalent and nonradial solutions. We would like to point out that the existence of many rotationally nonequivalent and nonradial solutions was considered in some problems involving the Laplacian operator. Br\'ezis and Niremberg \cite{Brez} proved the existence of nonradial positive solution for the following problem
\begin{equation} \label{GuidasNN}
	\left\{	\begin{array}{l}
		-	\Delta u+u-u^{p}=0 \quad\text{in } \quad D,\\
		u=0 \quad\text{on } \partial D,
	\end{array}
	\right.	
\end{equation} 	
where 
$$
D=\{x \in \mathbb{R}^N\,:\, r < |x| < r+d\}
$$
for some $d>0$. This type of phenomenon is known as symmetry breaking. In \cite{Coffman}, Coffman proved that the number
of nonradial and rotationally nonequivalent positive solutions of (\ref{GuidasNN}) in $D$ tends
to $+\infty$ as $r$ tends to $+\infty$, if $p > 1$ and $N = 2$ or $1 < p < N/(N-2)$ and $N \geq  3$.

Motivated by the above papers, some authors have studied this class of problems. For the subcritical case, we cite the papers of  Li \cite{Li}, Lin \cite{Lin}, Suzuki \cite{Suzuki} and references therein.

Related to the critical case, Wang and Willem \cite{WangWillem} have showed the
existence of multiple solutions for the following problem 
\begin{equation} \label{WW}
	\left\{	\begin{array}{l}
		-	\Delta u=\lambda u+u^{2^*-1} \quad\text{in } \quad \Omega_r,\\
		u=0 \quad\text{on } \partial \Omega_r,
	\end{array}
	\right.	
\end{equation} 
where 
\begin{equation} \label{omegar}
\Omega_r=\{x \in \mathbb{R}^N\,:\,r < |x| < r+1\}.
\end{equation}
The authors proved that
for $0 < \lambda < \pi^2$ and $n \in \mathbb{N}$, there exists $R(\lambda, n)$ such that for $r > R(\lambda, n)$, the
equation (\ref{WW}) has at least $n$ nonradial and rotationally nonequivalent solutions.
Motivated by \cite{WangWillem}, de Figueiredo and Miyagaki \cite{DjairoOlimpio} have considered
the following problem
\begin{equation} \label{FO}
	\left\{	\begin{array}{l}
		-	\Delta u=f(|x|,u)+u^{2^*-1} \quad\text{in } \quad \Omega_r,\\
		u=0 \quad\text{on } \partial \Omega_r,
	\end{array}
	\right.	
\end{equation} 
where $f$ is a $C^1$ function with subcritical growth.

In \cite{AlvesFreitas}, Alves and de Freitas showed the existence of many rotationally nonequivalent and nonradial solutions for a large class of quasilinear problems that have in particular case the problem below
$$
\left\{
\begin{array}{l}
	-\Delta_{N}u=\lambda|u|^{\beta-2}\beta e^{\alpha|u|^\frac{N}{N-1}} \quad \mbox{in} \quad  \Omega_r, \\
	u=0 \quad  \mbox{on} \quad  \partial \Omega_r.
\end{array}
\right.
\leqno{(R_1)}
$$

Still related to this class of problem, we would like to cite the papers of 
Byeon \cite{Byeon}, Castro and Finan \cite{CAstroFinan}, Catrina and Wang \cite{CatrinaWang},
Mizoguchi and Suzuki \cite{MIzoguchiSuzuki}, Hirano and Mizoguchi \cite{HiranoMizoguchi} and references
therein.

Motivated by bibliography cited above and more precisely, by results found in \cite{AlvesFreitas}, \cite{DjairoOlimpio} and \cite{WangWillem}, we are ready to state our second main result, however we need to fix some more conditions: 
\begin{enumerate}
\item[$(\varphi_{9})$] $\dfrac{\varphi(x,t)}{|t|^{q-3}t}$ is nonincreasing for $t\neq 0$.

\item[$(\varphi_{10})$] $\Phi$ is radial in relation with $x$ that is $\Phi(|x|,t)=\Phi(x,t)$ for all $t>0$.

\item[$(\varphi_{11})$] There exists  $\kappa \in (0, \frac{1}{2^{N+1}})$ such that 
$$
|\partial_s \Phi(s,t)| \leq \kappa \Phi(s,t), \, \forall (s,t) \in \mathbb{R}^2.
$$

\item[$(\eta)$] The functions $\eta_N,\tilde{\eta}_q,\eta_p$ and $g$ are radial in $x$, that is 
$$
\eta_N(x)=\eta_N(|x|),\,\tilde{\eta}_q(x)=\tilde{\eta}(|x|),\,\eta_p(x)=\eta_p(|x|)
$$
and
$$
g(x,t)=g(|x|,t),
$$
for all $x\in\Omega$ and $t>0$.
\item[$(\Omega_N)$] There is $\delta_1>0$ such that
$$
\mathcal{A}=\left\{x \in \mathbb{R}^N\,:\, \frac{2r+1}{2}-\delta_1 \leq  |x| \leq \frac{2r+1}{2}+\delta_1\right\} \subset \Omega_N\setminus\overline{(\Omega_q)_\delta}.
$$ 

\item[$(g_4)$] $g(|x|,.)$ is a $C^1$ function in the interval $(0,+\infty)$ and  $\dfrac{g(|x|,t)}{|t|^{q-1}}$ is increasing for $t\neq 0$ uniformly in $x\in (\overline{\Omega_q})_{\frac{\delta}{2}}$. 

\noindent The reader is invited to see that $\Phi$ given in (\ref{Phi}) also satisfies $(\varphi_9)-(\varphi_{11})$, provided $(\eta)$ holds.

\vspace{0.5 cm}

Our second main theorem has the following statement.
\end{enumerate}
\begin{theorem}\label{T2}
	Assume $\Omega=\Omega_r$ with $N\geq 2$ and $N\neq 3$. Let $(f_2), (g_1),(g_2),(g_4)$, $(\eta)$, $(\Omega_N)$ and $(\varphi_1)-(\varphi_{11})$ holds. Then, for each $n\in\mathbb{N}$, there exists $r_0=r_0(n)>0$ and $\lambda_0=\lambda_0(n)>0$ such that for $\lambda\geq \lambda_0$ and $r\geq r_0$, the problem $(P)$ has at least $n$ nonradial, rotationally nonequivalent and nontrivial solutions.
\end{theorem}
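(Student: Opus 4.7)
The plan is to combine the principle of symmetric criticality with a careful Moser-type concentration argument on an annular region, exploiting the freedom to choose $\lambda$ and $r$ large. Fix $n\in\mathbb{N}$. For $N\geq 4$ I would work with the subgroup
\[
G_n=\mathbb{Z}_n\times O(N-2)\subset O(N),
\]
where $\mathbb{Z}_n$ acts by rotations of angle $2\pi/n$ in the $(x_1,x_2)$-plane and $O(N-2)$ acts on the remaining coordinates; for $N=2$ take $G_n=\mathbb{Z}_n$. Hypotheses $(\varphi_{10})$, $(\eta)$ and the radial nature of $\Omega_r$ guarantee that the energy functional
\[
I_\lambda(u)=\int_{\Omega_r}\Phi(x,|\nabla u|)\,dx-\int_{\Omega_r}F(x,u)\,dx
\]
is invariant under the natural $G_n$-action on $W^{1,\Phi}_0(\Omega_r)$. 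Writing $X_n$ for the fixed point subspace, Palais's principle of symmetric criticality reduces the problem to finding critical points of $I_\lambda|_{X_n}$.

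Next I would verify the mountain pass geometry for $I_\lambda|_{X_n}$ using the $(\Delta_2)$-type bounds implicit in $(\varphi_3)$ together with $(g_1)$, $(g_2)$ and the structure of $(f_2)$ near zero and at infinity, exactly as in the single-solution analysis of \cite{AGR}. Let $c_{n,\lambda}$ be the mountain pass level and let $c^*$ be the threshold below which the local Palais--Smale condition for the doubly critical functional holds; this threshold mixes the best Sobolev constant on $\Omega_p$ for $p^*$ and the Trudinger--Moser limiting level on $\Omega_N$, and its derivation follows the arguments of \cite{AGR} combined with a concentration-compactness analysis adapted to Musielak--Sobolev spaces. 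The crucial step is to test $I_\lambda$ on suitably symmetrized Moser functions: using $(\Omega_N)$, pick $n$ points $\xi_1,\dots,\xi_n$ equidistributed on the circle $\{|x|=(2r+1)/2\}\cap\{x_3=\dots=x_N=0\}$. For $r$ large these orbits are separated by a distance bounded below independently of $r$. Let $m_k(x)=M_{\rho,\epsilon}(x-\xi_k)$ be standard Moser bumps supported in small balls around $\xi_k$ (well inside $\mathcal{A}\subset\Omega_N\setminus\overline{(\Omega_q)_\delta}$, where $(\varphi_6)$ and $(\varphi_8)$ degenerate to the pure $N$-Laplacian thanks to $(\varphi_8)$ forcing $\tau_2\equiv0$ there), symmetrized by averaging over $G_n$:
\[
w_n=\sum_{k=1}^{n}m_k.
\]
The supports of $m_k$ being mutually disjoint, the $W^{1,\Phi}$ norm of $w_n$ equals $n$ times that of $m_1$, and the exponential nonlinearity term contributes $\lambda$ times an $n$-fold sum of positive integrals. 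Choosing $\epsilon,\rho$ as in the classical Moser estimate and then $\lambda\geq\lambda_0(n)$ sufficiently large, a computation analogous to \cite{AlvesFreitas} shows
\[
\max_{t\geq 0}I_\lambda(tw_n)<c^*.
\]
Hence the Palais--Smale condition holds at level $c_{n,\lambda}$ and one obtains a nontrivial solution $u_{n,\lambda}\in X_n$.

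To separate the solutions for different $n$, I would show that $c_{n,\lambda}$ grows with $n$: the lower bound provided by the Nehari identity combined with $(\varphi_9)$ and $(g_4)$ yields $c_{n,\lambda}\geq n\, c_{1,\lambda}^{\mathrm{loc}}$, where $c_{1,\lambda}^{\mathrm{loc}}$ is the minimax level on a single fundamental domain of $G_n$, bounded below by a positive constant independent of $n$ (this uses, in a standard way, that any $u\in X_n$ concentrates its mass symmetrically on at least $n$ disjoint domains). Thus $c_{n,\lambda}\to\infty$ as $n\to\infty$, so by choosing inductively $n_1<n_2<\dots<n_k$ with $c_{n_j,\lambda}$ strictly increasing and all strictly below $c^*$ (possible upon enlarging $\lambda$ and $r$) we produce $k$ solutions with pairwise distinct energies, which are therefore rotationally nonequivalent. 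Nonradiality is automatic: a radial solution lies in $X_m$ for every $m$, so its energy would have to exceed $\sup_m c_{m,\lambda}^{\mathrm{lower}}=\infty$, contradicting finiteness of $I_\lambda(u_{n,\lambda})$.

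The main obstacle will be the PS-threshold analysis: controlling simultaneously the Sobolev-critical term on $\Omega_p$ and the Trudinger--Moser-critical term on $\Omega_N$ inside the Musielak--Sobolev framework, and showing that the $G_n$-symmetry genuinely raises the critical level by a factor comparable to $n$. Here hypothesis $(\varphi_{11})$ is essential, as it controls the $x$-derivative of $\Phi$ and therefore allows a clean localization/cut-off argument in the concentration-compactness step; the role of the restriction $N\neq 3$ is to ensure that $O(2)\times O(N-2)$ acts without fixed points on the relevant sphere, so that the $n$ orbits used to build $w_n$ remain disjoint.
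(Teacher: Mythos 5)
Your overall framework — $G_n=\mathbb{Z}_n\times O(N-2)$-invariant subspaces, Palais symmetric criticality, Moser bumps well inside $\mathcal A\subset\Omega_N$, and the $(PS)$ threshold mixing the Sobolev and Trudinger--Moser constants — matches the paper. But the step you use to \emph{separate} the $n$ solutions and to deduce nonradiality contains a genuine gap, and it is precisely the step where the paper invests its technical effort.

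You assert that $c_{n,\lambda}\geq n\,c^{\mathrm{loc}}_{1,\lambda}$ with $c^{\mathrm{loc}}_{1,\lambda}>0$ independent of $n$, hence $c_{n,\lambda}\to\infty$, and you build both the nonequivalence and nonradiality arguments on this. That estimate is not justified and is essentially false in this setting. Indeed, a $G_n$-invariant function need not look like $n$ disjoint bumps: it can concentrate on an $O(2)\times O(N-2)$-invariant set (a full ring), in which case the fundamental-domain energy shrinks as $1/n$ while the number of domains grows as $n$, and nothing forces the level to go to infinity. Worse, your own construction must keep all levels \emph{below} the $(PS)$ threshold $c^*$, so you cannot also have them tend to $+\infty$: the argument is internally inconsistent. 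The paper circumvents exactly this by proving strict monotonicity $J_{k,r}<J_{km,r}$ via an ``unfolding'' change of variables $v(\theta,\rho,|y|)=u(\theta/m,\rho,|y|)$ (its Lemma on $J_{km,r}$ vs.\ $J_{k,r}$), which requires knowing $J_{km,r}<J_{\infty,r}$; and it establishes $J_{\infty,r}\geq M$ for $r$ large by a separate compactness argument, not by a linear growth bound. That lower bound on the fully symmetric level $J_{\infty,r}$ rests on two tools you did not mention and cannot avoid: a Poincar\'e-type inequality on $\Omega_r$ with constant uniform in $r\geq 1$ (this is where $(\varphi_{11})$ actually enters, not in the cut-off for concentration-compactness as you suggest), and a Strauss-type radial $L^\infty$ decay estimate in Musielak--Sobolev spaces, used to show that $O(2)\times O(N-2)$-symmetric Nehari minimizers on $\Omega_{r_n}$, extended by zero to $\mathbb R^N$, satisfy $\|\tilde u_n\|_\infty\to 0$ as $r_n\to\infty$, forcing a contradiction with the Nehari constraint. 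Without these, you have neither the strict chain $J_{2,r}<J_{4,r}<\dots<J_{2^n,r}<J_{\infty,r}$ nor the fact that radial solutions have energy at least $J_{\infty,r}\geq M$, which is how the paper rules out radiality; your ``$\sup_m c_{m,\lambda}^{\mathrm{lower}}=\infty$'' substitute does not hold.

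Two smaller remarks. First, the paper works on Nehari manifolds in each $W^{1,\Phi}_{0,H_k}(\Omega_r)$ rather than via mountain pass in $X_n$; the Nehari framework is what makes the unfolding comparison between $J_{k,r}$ and $J_{km,r}$ clean, so if you keep the mountain pass route you still need to produce a comparable comparison mechanism. Second, your explanation of the restriction $N\neq 3$ (``$O(2)\times O(N-2)$ acts without fixed points'') is a guess; the obstruction is tied to the polar-coordinate decomposition $x=(\theta,\rho,|y|)$, $y\in\mathbb R^{N-2}$, used in the unfolding lemma, which degenerates when $N-2=1$.
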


\subsection{Our approach: }
To prove our main results (Theorem \ref{T1}-\ref{T2}), we use variational methods. More precisely, for the proof of Theorem \ref{T1}, we follow the approach from Wei and Wu \cite{ZX92} and Silva and Xavier \cite{SX2003}. To this end, we use a result from Ambrosetti and Rabinowitz (see Lemma \ref{AmRbthm}). To obtain Theorem \ref{T2}, we adapt for our problem some ideas found in de Figueiredo and Miyagaki \cite{DjairoOlimpio} and Alves and de Freitas \cite{AlvesFreitas}. Here, we establish a Poincar\'e type inequality (Lemma \ref{poincare}) and a Strauss type result (Lemma \ref{strauss}) in the setting of Musielak-Sobolev spaces.

It is worth mentioning that, due to the double critical behavior, the energy functionals associated with the problem $(P)$ do not satisfy the $(PS)$-condition at some levels, which brings some difficulties to apply variational methods. To overcome such difficulties, we closely follow the approach introduced in \cite{AGR}, where one needs to simultaneously employ the concentration compactness Lemma due to Lions in $W^{1,p}(\Omega_p)$ found in Medeiros \cite{Everaldo}, see Lemma \ref{concentration},  to obtain a useful estimate related to the critical exponent problem and a version of the Trundiger-Moser inequality in $W^{1,N}(\Omega_N)$ by Cianchi \cite{Cianchi}, see Lemma \ref{TM},  to deal with the exponential growth. Another difficulty appears, since the trace of the functions on $\partial\Omega_p$ and $\partial\Omega_N$ may not vanish. We tackle this difficulty by applying the type of results that are used in the study of Neumann boundary value problems (Lemma \ref{TM2}-\ref{TM4}).

\subsection{Organization of the article}
This article is organized as follows: In Section 2, we discuss some preliminary results for the Musielak-Sobolev spaces, while in Section 3, we show some technical results that will be used in our approach. In sections 4 and 5, we discuss some preliminaries required to prove our main results and finally, in Section 6, we prove our main results (Theorems \ref{T1} and \ref{T2}). 

\subsection{Notations}
Throughout the paper, for $t>1$, we denote by $t'=\frac{t}{t-1}$. By $C$, we mean a constant which may vary from line to line or even over the same line. If $C$ depends on the parameters $r_1,r_2,\cdots,r_k$, we write $C=C(r_1,r_2,\cdots,r_k)$.

\section{A brief review about the Musielak-Sobolev spaces}
In this section, we recall some results on Musielak-Orlicz and Musielak-Sobolev spaces. For more details, we refer to \cite{Iwona,Fan, HH, Mu} and their references.

Let $\Omega \subset \mathbb{R}^N$ be a smooth bounded domain and $\Phi(x,t)=\int_{0}^{|t|}\varphi(x,s)s\,ds$ be a generalized N-function, that is, for each $t \in \mathbb{R}$, the function $\Phi(., t)$ is measurable and for a.e. $x \in \Omega$, the function $\Phi(x, .)$ is an N-function. For the reader's convenience, we recall that a continuous function $A : \mathbb{R} \rightarrow [0,+\infty)$ is an N-function if
\begin{enumerate}
	\item[$(i)$] $A$ is convex.
	\item[$(ii)$] $A = 0 \Leftrightarrow t = 0 $.
	\item[$(iii)$] $\displaystyle\lim_{t\rightarrow0}\frac{A(t)}{t}=0$ and $\displaystyle\lim_{t\rightarrow+\infty}\frac{A(t)}{t}= +\infty$ .
	\item[$(iv)$] $A$ is even.
\end{enumerate}

The Musielak-Orlicz space $L^{\Phi}(\Omega)$ is defined by
\[
L^{\Phi}(\Omega)=\left\{
u:\Omega \to  \mathbb{R} \left\vert \,u\text{ is
	measurable and }\,\,\exists \, \tau >0 \,\, \mbox{such that} \,\, \int_{\Omega}\Phi\left(x,\frac{|u|}{\tau}\right)\,dx<+\infty\right.  \right\}
\]
endowed with the Luxemburg norm
\[
\left\vert u\right\vert _{\Phi}=\inf\left\{  \lambda>0\left\vert
\,\int_{\Omega}\Phi\left(x,\frac{|u|}{\lambda}\right)\,dx \leq1\right.  \right\}  \text{.}
\]

We say that an N-function $\Phi$ satisfies the $\Delta_{2}$-condition, denote by $\Phi \in \Delta_{2}$, if there exists a constant $K>0$ such that
\begin{equation}\label{delta2}
\Phi(x,2t) \leq K\Phi(x,t)\quad \mbox{for} \quad x \in \Omega \quad \mbox{and} \quad t\in \mathbb{R}.
\end{equation}
Arguing as in \cite[Theorem 4.4.4]{PKJF}, it follows that $\Phi$ satisfies  the  $\Delta_{2}$-condition if and only if,
$$
\sup_{(x,t) \in \Omega \times (0,+\infty)}\frac{\varphi(x,|t|)|t|^2}{\Phi(x,|t|)}<+\infty.
$$

Moreover, an important inequality involving $\Phi$ and its complementary function $\tilde{\Phi}$ ( see (\ref{tildePhi}))  is a Young's type inequality given by
\begin{equation} \label{YIne}
st \leq \Phi(x,s) + \widetilde{\Phi}(x,t), \quad x \in \Omega  \quad  \mbox{and} \quad\,\forall s, t\geq 0.
\end{equation}
Using the above inequality, it is possible to prove a H\"older type inequality, that is,
\[
\Big| \int_{\Omega}uvdx \Big| \leq 2 \Vert u \Vert_{\Phi}\Vert v \Vert_{\widetilde{\Phi}}\quad \forall \,u \in L^{\Phi}(\Omega) \quad \mbox{and} \quad \forall \,v \in L^{\widetilde{\Phi}}(\Omega).
\]

Arguing as in  \cite{FN}, if $(\varphi_3)$ holds, we derive that
$$
\frac{q}{q-1}\leq \frac{\tilde{\varphi}(x,|t|)|t|^2}{\tilde{\Phi}(x,|t|)} \leq \frac{p}{p-1}, \quad x \in \Omega \quad \mbox{and} \quad t \not=0,
$$
where
$$
\tilde{\Phi}(x,t)=\int_0^{|t|}\tilde{\varphi}(x,s)s\,ds,
$$
and 
$$
\tilde{\varphi}(x,s)=\sup\{t\,:\,\varphi(x,t)t \leq s\}, \quad x \in \overline{\Omega} \quad \mbox{and} \quad s\geq 0.
$$
Hence, if ($\varphi_3$) holds, we have $\tilde{\Phi}$ also satisfies the $\Delta_{2}$-condition. 

Arguing as in \cite[Lemma A2]{FN}, it is possible to prove that $\Phi$ and $\tilde{\Phi}$ satisfy the following inequality 
\begin{equation}\label{tildePP}
\tilde{\Phi}(x,\varphi(x,t)t) \leq \Phi(x,2t), \quad x \in \Omega \quad \mbox{and} \quad t \geq 0.
\end{equation} 

The condition ($\varphi_3$) is very important, because following the ideas of \cite[Lemmas 2.1 and 2.5]{FN}, it is possible to prove the following: Setting the functions
$$
\xi_{0}(t)=\min\{t^{p},t^q\}, \quad \xi_{1}(t)=\max\{t^{p},t^q\}, \quad \xi_{3}(t)=\min\{t^{\frac{p}{p-1}},t^{\frac{q}{q-1}}\} \quad \mbox{and} \quad \xi_{4}(t)=\max\{t^{\frac{p}{p-1}},t^{\frac{q}{q-1}}\},
$$
we have
\begin{equation} \label{DES2}
\xi_0(s)\Phi(x,t) \leq \Phi(x,st) \leq \xi_1(s)\Phi(x,t) \quad \mbox{for} \quad s,t \geq 0,
\end{equation}
\begin{equation} \label{DES3}
\xi_0(|u|_\Phi) \leq \int_{\Omega}\Phi(x,|u|)\,dx \leq \xi_1(|u|_\Phi) \quad \mbox{for} \quad u \in L^{\Phi}(\Omega),
\end{equation}
\begin{equation} \label{DES4}
\xi_3(s)\tilde{\Phi}(x,t) \leq \tilde{\Phi}(x,st) \leq \xi_4(s)\tilde{\Phi}(x,t) \quad \mbox{for} \quad s,t \geq 0,
\end{equation}
and
\begin{equation} \label{DES5}
\xi_3(|u|_{\tilde{\Phi}}) \leq \int_{\Omega}\tilde{\Phi}(x,|u|)\,dx \leq \xi_4(|u|_{\tilde{\Phi}}) \quad \mbox{for} \quad u \in L^{\tilde{\Phi}}(\Omega).
\end{equation}

The Musielak-Sobolev space $W^{1,\Phi}(\Omega)$ can be defined by
\[
W^{1,\Phi}(\Omega)=\left\{  u\in
L^{\Phi}(\Omega)\left\vert \,\left\vert \nabla u\right\vert
\in L^{\Phi}(\Omega)\right. \right\}
\]
with the norm
\[
\left\Vert u\right\Vert _{1,\Phi}=\left\vert u\right\vert
_{\Phi}+\left\vert \nabla u\right\vert _{\Phi}\text{.}
\]

The conditions $(\varphi_1)-(\varphi_5)$ ensure that the spaces $L^{\Phi}(\Omega)$ and
$W^{1,\Phi}(\Omega)$ are reflexive and separable Banach spaces, for more details see \cite[Propositions 1.6 and 1.8]{Fan}. In what follows, $W_0^{1,\Phi}(\Omega)$ is defined as the closure of $C_0^{\infty}(\Omega)$ in $W_0^{1,\Phi}(\Omega)$ with respect to the above norm. Moreover, $\|u\|=|\nabla u|_{\Phi}$ is a norm in $W_0^{1,\Phi}(\Omega)$ and if $(\varphi_1)-(\varphi_5)$ holds, by \cite[Lemma 5.7]{Gossez}, $\|\,\,\|$ is equivalent to the norm $\|u\|_{1,\Phi}$ in $W_0^{1,\Phi}(\Omega)$.

As a consequence of (\ref{DES3}) we have the lemma below that will be used later on.

\begin{proposition} \label{modular1}
	\label{p1}The functional $\rho:W_0^{1,\Phi}(\Omega) \to \mathbb{R}$ defined by
	\begin{equation}
	\rho(u)=\int_{\Omega} \Phi(x,|\nabla u|)\,dx
	\text{,} \label{ps}
	\end{equation}
	has the following properties:	
	\begin{enumerate}
		\item[\emph{(i)}] If $\left\Vert u\right\Vert \geq1$, then $\left\Vert u\right\Vert
		^{p}\leq\rho(u)\leq\left\Vert u\right\Vert ^{q}$.
		
		\item[\emph{(ii)}] If $\| u\| \leq1$, then $\left\Vert u\right\Vert
		^{q}\leq\rho(u)\leq\left\Vert u\right\Vert ^{p}$.
	\end{enumerate}
	In particular, $\rho(u)=1$ if and only if $\left\Vert u \right\Vert =1$ and if $ (u_n) \subset W_0^{ 1,\Phi}( \Omega ) $, then $\left\Vert u_{n}\right\Vert 	\rightarrow0$ if and only if $\rho( u_{n}) \rightarrow0$.
\end{proposition}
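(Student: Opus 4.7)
The plan is to derive both inequalities directly from the already-stated homogeneity bound (\ref{DES2}) and the fundamental relation between the Luxemburg norm and the modular that holds under the $\Delta_2$-condition. Since $\Phi$ satisfies $(\varphi_1)$--$(\varphi_5)$, and in particular $(\varphi_3)$ forces $\Phi\in\Delta_2$ as noted after (\ref{delta2}), a standard argument shows that for every $u\neq 0$ one has
\begin{equation*}
\int_{\Omega}\Phi\!\left(x,\frac{|\nabla u|}{\|u\|}\right)\,dx = 1.
\end{equation*}
This identity is the one tool I will need beyond (\ref{DES2}); I would verify it in a short preliminary remark by observing that the set $\{\lambda>0:\int_\Omega\Phi(x,|\nabla u|/\lambda)\,dx\leq 1\}$ is an interval of the form $[\|u\|,\infty)$, and by $\Delta_2$ the map $\lambda\mapsto \int_\Omega\Phi(x,|\nabla u|/\lambda)\,dx$ is continuous on $(0,\infty)$, so the value at the infimum $\lambda=\|u\|$ equals $1$.

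With this in hand, I would set $s=\|u\|$ and $t=|\nabla u|/\|u\|$ in (\ref{DES2}) and integrate over $\Omega$ to obtain
\begin{equation*}
\xi_0(\|u\|)\int_{\Omega}\Phi\!\left(x,\tfrac{|\nabla u|}{\|u\|}\right)dx \;\leq\; \rho(u) \;\leq\; \xi_1(\|u\|)\int_{\Omega}\Phi\!\left(x,\tfrac{|\nabla u|}{\|u\|}\right)dx.
\end{equation*}
By the identity above the two integrals equal $1$, leaving $\xi_0(\|u\|)\leq \rho(u)\leq \xi_1(\|u\|)$. Splitting on whether $\|u\|\geq 1$ or $\|u\|\leq 1$, the definitions $\xi_0(s)=\min\{s^p,s^q\}$ and $\xi_1(s)=\max\{s^p,s^q\}$ immediately yield statements (i) and (ii).

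For the two final assertions, the equivalence $\rho(u)=1\iff\|u\|=1$ follows by contrapositive: if $\|u\|>1$ then (i) gives $\rho(u)\geq\|u\|^p>1$, while if $\|u\|<1$ then (ii) gives $\rho(u)\leq\|u\|^p<1$; the converse direction is read off from either inequality evaluated at $\|u\|=1$. For the sequential statement, if $\|u_n\|\to 0$ then eventually $\|u_n\|\leq 1$ and (ii) gives $\rho(u_n)\leq\|u_n\|^p\to 0$; conversely, if $\rho(u_n)\to 0$ then eventually $\rho(u_n)<1$, so by (i) we must have $\|u_n\|\leq 1$, and applying (ii) gives $\|u_n\|^q\leq\rho(u_n)\to 0$, hence $\|u_n\|\to 0$.

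The only mildly delicate step is the auxiliary identity $\int_\Omega\Phi(x,|\nabla u|/\|u\|)\,dx=1$, which crucially uses the $\Delta_2$-condition; without it one only gets the inequality $\leq 1$, which would weaken (i)--(ii). Everything else is bookkeeping with the $\xi_0,\xi_1$ envelopes and does not require any property of $\Phi$ beyond what has already been established in the preceding paragraphs.
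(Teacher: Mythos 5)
Your argument is correct, and it is essentially the route the paper takes: the paper simply invokes the already-displayed modular estimate (\ref{DES3}) with $|\nabla u|$ in place of $u$ (so that $\xi_0(\|u\|)\le \rho(u)\le \xi_1(\|u\|)$), and then splits on $\|u\|\ge 1$ versus $\|u\|\le 1$ exactly as you do. The only difference is that you re-derive (\ref{DES3}) from (\ref{DES2}) together with the unit-modular identity $\int_\Omega \Phi(x,|\nabla u|/\|u\|)\,dx=1$ (valid under $\Delta_2$), rather than citing (\ref{DES3}) directly; the content is the same.
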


\begin{remark}
	\label{r1}For the functional
	$\xi:L^{\Phi}(\Omega)\rightarrow \mathbb{R}$ given by
	\[
	\xi(u)=\int_{\Omega}\Phi(x,|u|)\,dx\text{,}
	\]
	the conclusion of Proposition \ref{p1} also holds, for example, if $ (u_n) \subset L^{\Phi}( \Omega) $, then  $\left\vert u_{n}\right\vert
	_{\Phi}\rightarrow0$ if and only if $\xi(u_{n})\rightarrow0$.
\end{remark}

From the definition of $W^{1,\Phi}(\Omega)$ and properties of $\Phi$, we have the  continuous embedding
$$
W^{1,\Phi}(\Omega) \hookrightarrow W^{1,q}(({\Omega}_q)_{\omega})
$$
for all $\omega \in (0,\delta)$ and the compact embedding
$$
W^{1,q}(({\Omega}_q)_\delta) \hookrightarrow C(\overline{(\Omega_q)_{\omega}}),
$$
because $q>N$, from where it follows that
\begin{equation} \label{EMB1}
W^{1,\Phi}(\Omega) \hookrightarrow C(\overline{(\Omega_q)_{\omega}}),
\end{equation}
is compact, which is crucial in our approach.

Next we would like to state our last result found in \cite[Theorem 2.2]{Fan}, which says the operator $-\Delta_{\Phi}: W_0^{1,\Phi}(\Omega) \to (W_0^{1,\Phi}(\Omega))^*$  belongs to the Class $(S_+)$.

\begin{lemma} \label{S+} Assume the conditions $(\varphi_1)-(\varphi_8)$. If $u_n \rightharpoonup u$ in $W_0^{1,\Phi}(\Omega)$ and
$$
	\lim_{n \to +\infty}\int_{\Omega}\langle \varphi(x,|\nabla u_n|)\nabla u_n, \nabla u_n- \nabla u\rangle\,dx=0,
$$
then $u_n \to u$ in $W_0^{1,\Phi}(\Omega)$.	
\end{lemma}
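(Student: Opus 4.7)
The plan is to prove the $(S_+)$-property by the classical monotonicity–plus–strict-convexity scheme, adapted to the Musielak-Orlicz setting.

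\textbf{Step 1 (monotonicity of the vector field).} Set $A(x,\xi):=\varphi(x,|\xi|)\xi$ for $\xi\in\mathbb{R}^N$. From $(\varphi_2)$, the function $\xi\mapsto \Phi(x,|\xi|)$ is $C^1$ and strictly convex, and $A(x,\cdot)$ is its gradient. Consequently, $A(x,\cdot)$ is strictly monotone:
\[
\langle A(x,\xi)-A(x,\eta),\xi-\eta\rangle \geq 0,\qquad \text{with equality iff } \xi=\eta.
\]
In particular, the integrand $h_n(x):=\langle A(x,\nabla u_n)-A(x,\nabla u),\nabla u_n-\nabla u\rangle$ is pointwise non-negative.

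\textbf{Step 2 (showing $\int_\Omega h_n\,dx\to 0$).} Decompose
\[
\int_\Omega h_n\,dx=\int_\Omega\!\langle A(x,\nabla u_n),\nabla u_n-\nabla u\rangle\,dx -\int_\Omega\!\langle A(x,\nabla u),\nabla u_n-\nabla u\rangle\,dx.
\]
The first piece vanishes by hypothesis. For the second piece, weak convergence $u_n\rightharpoonup u$ gives $\nabla u_n\rightharpoonup \nabla u$ in $L^{\Phi}(\Omega)^N$, while from $(\varphi_3)$, the inequality (\ref{tildePP}), and (\ref{DES5}), one sees $A(x,\nabla u)\in L^{\widetilde{\Phi}}(\Omega)^N$; the $L^\Phi$–$L^{\widetilde{\Phi}}$ duality then gives that the second piece tends to zero. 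Hence $\int_\Omega h_n\,dx\to 0$.

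\textbf{Step 3 (pointwise a.e. convergence of the gradients).} Since $h_n\geq 0$ and $\int h_n\to 0$, passing to a subsequence $h_n\to 0$ a.e. in $\Omega$. Strict monotonicity from Step~1 then forces $\nabla u_n\to \nabla u$ a.e.\ in $\Omega$.

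\textbf{Step 4 (upgrade to strong convergence in $W_0^{1,\Phi}(\Omega)$).} This is the main obstacle, since a.e.\ convergence alone is insufficient in a Musielak setting. The plan is to apply the Brezis–Lieb/Fatou trick with the modular functional $\rho$ of Proposition~\ref{modular1}. By convexity of $\Phi(x,\cdot)$ and oddness,
\[
\Phi\!\left(x,\tfrac{|\nabla u_n-\nabla u|}{2}\right)\leq \tfrac{1}{2}\Phi(x,|\nabla u_n|)+\tfrac{1}{2}\Phi(x,|\nabla u|).
\]
Writing $F_n(x):=\tfrac{1}{2}\Phi(x,|\nabla u_n|)+\tfrac{1}{2}\Phi(x,|\nabla u|)-\Phi\!\big(x,\tfrac{|\nabla u_n-\nabla u|}{2}\big)\geq 0$, using the a.e.\ convergence from Step~3 and Fatou's lemma yields
\[
\int_\Omega \Phi(x,|\nabla u|)\,dx\leq \liminf_n\Big(\tfrac12\!\int_\Omega\!\Phi(x,|\nabla u_n|)\,dx+\tfrac12\!\int_\Omega\!\Phi(x,|\nabla u|)\,dx-\!\int_\Omega\!\Phi\!\left(x,\tfrac{|\nabla u_n-\nabla u|}{2}\right)dx\Big).
\]
Combining this with a companion upper bound for $\limsup_n \int \Phi(x,|\nabla u_n|)\,dx$, extracted from the relation in Step~2 via $(\varphi_3)$, will force both
\[
\int_\Omega \Phi(x,|\nabla u_n|)\,dx \longrightarrow \int_\Omega \Phi(x,|\nabla u|)\,dx \qquad\text{and}\qquad \int_\Omega \Phi\!\left(x,\tfrac{|\nabla u_n-\nabla u|}{2}\right)dx\longrightarrow 0.
\]
Finally, Proposition~\ref{modular1} translates the vanishing of the modular into $\|u_n-u\|\to 0$, which is the desired conclusion. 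The delicate point — and where I would expect to invest the most effort — is the simultaneous control of the modular of $|\nabla u_n|$ and of $|\nabla u_n-\nabla u|$, since the $\Delta_2$-condition on $\Phi$ (provided by $(\varphi_3)$) and the modular-norm equivalence are the only tools available to convert weak information into strong convergence in this non-homogeneous setting.
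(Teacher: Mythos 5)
The paper does not prove this lemma: it is quoted verbatim from \cite[Theorem 2.2]{Fan} (Fan, \emph{Differential equations of divergence form in Musielak–Sobolev spaces and a sub-supersolution method}), so there is no in-paper argument to compare against. Your proposal is a self-contained attempt along the standard monotone-operator lines, and the overall skeleton is sound, but two steps as written contain genuine gaps.

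In Step 3 you pass from $h_n\to 0$ a.e. to $\nabla u_n\to\nabla u$ a.e. by ``strict monotonicity.'' Strict monotonicity alone is not enough: you must also rule out, for a.e.\ $x$, subsequences with $|\nabla u_n(x)|\to\infty$. This requires a coercivity estimate. It does hold here, since $(\varphi_3)$ gives
$\langle A(x,\xi),\xi\rangle=\varphi(x,|\xi|)|\xi|^2\geq p\,\Phi(x,|\xi|)$ and $(\varphi_3)$--$(\varphi_4)$ give $\Phi(x,t)\geq d_1 t^p$ for $t\geq 1$, while the cross terms $\langle A(x,\xi_n),\nabla u(x)\rangle$ and $\langle A(x,\nabla u(x)),\xi_n\rangle$ grow at most like $\varphi(x,|\xi_n|)|\xi_n|$ and $|\xi_n|$ respectively, which are dominated; but this needs to be said, not just asserted via ``strict monotonicity.''

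In Step 4 the ``companion upper bound'' $\limsup_n\int_\Omega\Phi(x,|\nabla u_n|)\,dx\leq\int_\Omega\Phi(x,|\nabla u|)\,dx$ is the crux, and your attribution to ``$(\varphi_3)$'' is not adequate: Young-type estimates from $(\varphi_3)$ and \eqref{tildePP} only yield boundedness of the modulars, not the sharp comparison. What actually works is the convexity inequality for $\xi\mapsto\Phi(x,|\xi|)$,
$$
\Phi(x,|\nabla u|)\;\geq\;\Phi(x,|\nabla u_n|)+\bigl\langle \varphi(x,|\nabla u_n|)\nabla u_n,\;\nabla u-\nabla u_n\bigr\rangle,
$$
which upon integration gives $\int_\Omega\Phi(x,|\nabla u_n|)\,dx-\int_\Omega\Phi(x,|\nabla u|)\,dx\leq \int_\Omega\langle A(x,\nabla u_n),\nabla u_n-\nabla u\rangle\,dx\to 0$ by hypothesis. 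With this correction the Fatou/Brezis--Lieb argument in Step 4 closes: one obtains $\int_\Omega\Phi\big(x,\tfrac{|\nabla u_n-\nabla u|}{2}\big)\,dx\to 0$, and Proposition \ref{modular1} (together with the $\Delta_2$-condition from $(\varphi_3)$) converts this to $\|u_n-u\|\to 0$. So the strategy is salvageable, but as written the proof has these two gaps.
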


Before concluding this section, we will show a version of Poincar\'e's inequality, which is a key point in the proof of Theorem \ref{T2}.

\begin{lemma} \label{poincare} Assume $(\varphi_1)-(\varphi_5)$ and $(\varphi_{11})$. Then, there is $\Upsilon>0$ independent of $r\geq 1$ such that
$$
\int_{\Omega_r}\Phi(x,|u|)\,dx \leq \Upsilon \int_{\Omega_r}\Phi(x,|\nabla u|)\,dx, \quad \forall u \in W_0^{1,\Phi}(\Omega_r).
$$ 
\end{lemma}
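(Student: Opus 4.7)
\textbf{Proof plan for Lemma \ref{poincare}.}

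The approach is to run the standard one-dimensional Poincaré argument along radii of the annulus and use $(\varphi_{11})$ to compare the values of $\Phi(x,\cdot)$ at different radial points. By density of $C_0^\infty(\Omega_r)$ in $W_0^{1,\Phi}(\Omega_r)$, together with the continuity of the modulars $u\mapsto\int_{\Omega_r}\Phi(x,|u|)\,dx$ and $u\mapsto\int_{\Omega_r}\Phi(x,|\nabla u|)\,dx$ on $W_0^{1,\Phi}(\Omega_r)$ (which follows from the $\Delta_2$-condition and the estimates in \eqref{DES3} and Proposition~\ref{modular1}), it suffices to prove the inequality for $u\in C_0^\infty(\Omega_r)$.

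First, fix $u\in C_0^\infty(\Omega_r)$ and write $x=s\omega$ with $\omega\in S^{N-1}$ and $s\in(r,r+1)$. Since $u\equiv 0$ on $|x|=r$ (extend $u$ by zero), the fundamental theorem of calculus gives
$$
|u(s\omega)|\leq \int_{r}^{s}|\partial_\tau u(\tau\omega)|\,d\tau\leq \int_{r}^{r+1}|\nabla u(\tau\omega)|\,d\tau.
$$
Because $\Phi(s\omega,\cdot)$ is convex and the interval $[r,r+1]$ has length~$1$, Jensen's inequality yields
$$
\Phi\!\left(s\omega,|u(s\omega)|\right)\leq \Phi\!\left(s\omega,\int_{r}^{r+1}|\nabla u(\tau\omega)|\,d\tau\right)\leq \int_{r}^{r+1}\Phi\!\left(s\omega,|\nabla u(\tau\omega)|\right)d\tau.
$$

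Next, I invoke $(\varphi_{11})$: integrating the inequality $|\partial_s\Phi(s,t)|\leq\kappa\Phi(s,t)$ along the segment joining $s$ and $\tau$ in $[r,r+1]$ gives
$$
\Phi(s\omega,t)\leq e^{\kappa|s-\tau|}\Phi(\tau\omega,t)\leq e^{\kappa}\Phi(\tau\omega,t),\qquad s,\tau\in[r,r+1],\ t\geq 0,
$$
where I also use the radial assumption $(\varphi_{10})$ so that $\Phi$ depends only on $|x|$. Substituting this bound,
$$
\Phi\!\left(s\omega,|u(s\omega)|\right)\leq e^{\kappa}\int_{r}^{r+1}\Phi\!\left(\tau\omega,|\nabla u(\tau\omega)|\right)d\tau.
$$

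Finally, integrate in spherical coordinates. Since $s^{N-1}\leq(r+1)^{N-1}$ on $[r,r+1]$ and $\tau^{N-1}\geq r^{N-1}$ on the same interval, for $r\geq 1$,
$$
\int_{r}^{r+1}s^{N-1}\,ds\leq(r+1)^{N-1},\qquad \int_{r}^{r+1}h(\tau)\,d\tau\leq r^{-(N-1)}\int_{r}^{r+1}h(\tau)\tau^{N-1}\,d\tau
$$
for any nonnegative $h$, and $(r+1)^{N-1}/r^{N-1}=(1+1/r)^{N-1}\leq 2^{N-1}$. Combining these,
$$
\int_{\Omega_r}\Phi(x,|u|)\,dx=\int_{S^{N-1}}\!\!\int_{r}^{r+1}\Phi(s\omega,|u(s\omega)|)s^{N-1}\,ds\,d\omega\leq e^{\kappa}\,2^{N-1}\int_{\Omega_r}\Phi(x,|\nabla u|)\,dx,
$$
so $\Upsilon=e^{\kappa}\,2^{N-1}$ does the job, independently of $r\geq 1$.

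The main technical point to handle carefully is the \emph{$r$-uniformity} of the Jacobian factor: naive bounds on $\int_r^{r+1}s^{N-1}\,ds$ alone would produce a constant growing like $r^{N-1}$, and it is essential to pair the upper bound $(r+1)^{N-1}$ with the lower bound $r^{N-1}$ arising when reintroducing $\tau^{N-1}$ in the gradient integral. After that, the only nontrivial ingredient is $(\varphi_{11})$, whose role is precisely to allow the swap $\Phi(s\omega,\cdot)\leadsto\Phi(\tau\omega,\cdot)$ at a bounded cost. The density passage is routine once the inequality is established for smooth test functions.
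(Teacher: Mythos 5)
Your proof is correct, and it takes a genuinely different route from the paper's. The paper goes indirectly: it first establishes an $L^1$-Poincar\'e inequality on $\Omega_r$ with constant $2^{N-1}$ (by letting $p\to 1$ in a weighted $L^p$ estimate from \cite{AlvesFreitas}), then applies it to the composed function $w=\Phi(x,u)\in W_0^{1,1}(\Omega_r)$, unpacks $|\nabla\Phi(x,u)|\leq|\partial_s\Phi(x,|u|)|+\varphi(x,|u|)|u||\nabla u|$, and absorbs the spurious modular terms on the right using the $\Phi$--$\tilde\Phi$ Young inequality \eqref{YIne}, \eqref{tildePP}, the $\Delta_2$-condition, $(\varphi_{11})$, and a careful choice $\epsilon=1/2^{N+1}$. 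Your argument instead runs the one-dimensional Poincar\'e argument directly at the level of $\Phi$: fundamental theorem of calculus along radii, Jensen's inequality (exploiting that $[r,r+1]$ has unit length), the Gronwall-type consequence $\Phi(s,t)\leq e^{\kappa|s-\tau|}\Phi(\tau,t)$ of $(\varphi_{11})$, and the $r$-uniform Jacobian comparison $(r+1)^{N-1}/r^{N-1}\leq 2^{N-1}$. This is more self-contained (no $\tilde\Phi$, no Young-type absorption) and yields the explicit constant $\Upsilon=e^{\kappa}2^{N-1}$. Interestingly, your proof does \emph{not} need the quantitative smallness $\kappa<1/2^{N+1}$ imposed in $(\varphi_{11})$ --- any finite $\kappa$ works --- whereas the paper's absorption step genuinely uses it. Two minor remarks: (a) you invoke $(\varphi_{10})$, which is not listed among the lemma's hypotheses; however, $(\varphi_{11})$ is only meaningful for radial $\Phi$, and the paper's own proof also tacitly writes $\partial_s\Phi(x,|u|)$ as if $\Phi$ were radial, so this is a harmless gap in the paper's hypothesis list rather than in your argument; (b) when applying $(\varphi_{11})$ in logarithmic form you should note that the case $t=0$ is trivial since both sides vanish. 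Neither point affects correctness.
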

\begin{proof} Fix $p>1$ and $v \in C_{0}^{\infty}(\Omega_r)$. Arguing as in \cite[Lemma 3.1]{AlvesFreitas}, we get 
$$
\int_{\Omega_r}|v|^{p}\,dx \leq \left(\frac{r+1}{r}\right)^{N-1} \int_{\Omega_r}|\nabla v|^{p}\,dx.  
$$
Now, taking the limit when $p \to 1$ and using the fact that $r \geq 1$, we derive that
$$
\int_{\Omega_r}|v|\,dx \leq 2^{N-1} \int_{\Omega_r}|\nabla v|\,dx, \quad \forall  v \in C_{0}^{\infty}(\Omega_r). 
$$	
Since $C_0^{\infty}(\mathbb{R}^N)$ is dense in $W_0^{1,1}(\Omega_r)$, it follows that  
$$
\int_{\Omega_r}|w|\,dx \leq 2^{N-1} \int_{\Omega_r}|\nabla w|\,dx, \quad \forall  w \in W_0^{1,1}(\Omega_r). 
$$
Now, for each $u \in W_0^{1,\Phi}(\Omega_r)$, we know that $w=\Phi(x,u) \in W_0^{1,1}(\Omega_r)$ and so, 
$$
\int_{\Omega_r}\Phi(x,|u|)\,dx \leq 2^{N-1} \int_{\Omega_r}|\nabla \Phi(x,u)|\,dx.
$$
Since $|\nabla \Phi(x,u)|\leq |\partial_s \Phi(x,|u|)|+\varphi(|u|)|u||\nabla u|$, we obtain
$$
\int_{\Omega_r}\Phi(x,|u|)\,dx \leq 2^{N-1} \int_{\Omega_r}\left(|\partial_s \Phi(x,|u|)|+\varphi(x,|u|)|u||\nabla u|\right)\,dx, \quad \forall  u \in W_0^{1,\Phi}(\Omega_r).
$$
Given $\epsilon>0$, by $\Delta_2$ condition, $(\varphi_{11})$, (\ref{YIne}) and (\ref{tildePP}), there is $C_\epsilon>0$ such that 
$$
\int_{\Omega_r}\Phi(x,|u|)\,dx \leq 2^{N-1} \left[\epsilon \int_{\Omega_r}\Phi(x,|u|)\,dx+\kappa \int_{\Omega_r}\Phi(x,|u|)\,dx+C_\epsilon \int_{\Omega_r}\Phi(x,|\nabla u|)\,dx\;\; \right],
$$
for all  $u \in W_0^{1,\Phi}(\Omega_r)$. Thus, for $\epsilon=\frac{1}{2^{N+1}}$ and recalling that $\kappa < \frac{1}{2^{N+1}}$, there is $\Upsilon>0$ independent of $r\geq1$ such that 
$$
\int_{\Omega_r}\Phi(x,|u|)\,dx \leq \Upsilon \int_{\Omega_r}\Phi(x,|\nabla u|)\,dx\;\; , \quad \forall  u \in W_0^{1,\Phi}(\Omega_r).
$$ 
\end{proof}

\section{Some technical results}

The main goal of this section is to recall and prove some technical results that are crucial in the proof of our main result. Since we are going to work with double criticality, which involves the exponential critical growth and the critical growth $p^*$, the next two results are crucial in our approach. The first one is a Concentration Compactness Lemma due to Lions for $W^{1,p}(\Theta)$ explored in Medeiros \cite{Everaldo}, where $\Theta  \subset \mathbb{R}^N$ is a smooth bounded domain.
\begin{lemma} \label{concentration} Let $(u_n)$  be a sequence 	in $W^{1,p}(\Theta)$ with $1<p<N$ and $u_n \rightharpoonup u$ in $W^{1,p}(\Theta)$. If \\
\noindent $(i)$	\,\, $|\nabla u_n|^{p} \to \mu$ weakly-$^*$ in the sense of measure,\\
\noindent and \\
\noindent $(ii)$ \,\, $|u_n|^{p^*} \to \nu$ weakly-$^*$ in the sense of measure, \\
then for at most a countable index set $J$, we have
$$
\left\{
\begin{array}{l}
(a)\quad  \nu=|u|^{p^*}+\sum_{j \in J}\nu_j \delta_{x_j},\,\nu_j \geq 0.\\
(b)\quad  \mu \geq |\nabla u|^{p}+\sum_{j \in J}\mu_j \delta_{x_j},\,\mu_j \geq 0.\\
(c)\,\, \mbox{If} \,\, x_j \in \Theta, \,\, \mbox{then} \quad S_p\nu_j^{\frac{p}{p^*}} \leq \mu_j.\\
(d)\,\, \mbox{If} \,\, x_j \in \partial \Theta, \,\, \mbox{then} \quad \frac{S_p}{2^{p/N}}\nu_j^{\frac{p}{p^*}} \leq \mu_j,
\end{array}
\right.
$$	
where $p^*=\frac{Np}{N-p}$ and $S_p$ denotes the best constant of the embedding $D^{1,p}(\mathbb{R}^N) \hookrightarrow L^{p^*}(\mathbb{R}^N)$ given by
\begin{equation} \label{Sp*}
S_p=
\inf_{ {\footnotesize{
			\begin{array}{l}
			u \in D^{1,p}(\mathbb{R}^N) \\
			u \not=0
			\end{array}}}}
\frac{ \int_{\mathbb{R}^N}|\nabla u|^{p}\,dx}{\left(\int_{\mathbb{R}^N}|u|^{p^*}\,dx\right)^{\frac{p}{p^*}}}.
\end{equation}
\end{lemma}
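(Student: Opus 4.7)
\textbf{Proof plan for Lemma \ref{concentration}.} The overall strategy is the classical Lions concentration-compactness argument, adapted so as to track the boundary atoms. I would split the analysis into three stages: (i) decompose the limiting measures to isolate the contribution of the weak limit $u$; (ii) establish a local ``reverse Hölder'' inequality linking $\mu$ and $\nu$; (iii) apply an elementary measure-theoretic lemma to deduce atomicity and conclude.

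\emph{Stage 1 (decoupling $u$).} Put $v_n=u_n-u$, so that $v_n\rightharpoonup 0$ in $W^{1,p}(\Theta)$ and, by the Rellich--Kondrachov theorem, $v_n\to 0$ strongly in $L^p(\Theta)$. By the Brezis--Lieb lemma,
\[
\int_{\Theta}|u_n|^{p^*}\phi\,dx-\int_{\Theta}|v_n|^{p^*}\phi\,dx\longrightarrow\int_{\Theta}|u|^{p^*}\phi\,dx
\qquad\forall\phi\in C_c(\overline{\Theta}),
\]
so the weak-$*$ limit of $|v_n|^{p^*}$ is $\tilde\nu:=\nu-|u|^{p^*}$. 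A parallel argument using the weak lower semicontinuity of the norm on $W^{1,p}$ applied to each test function $\phi^{1/p}v_n$ gives only the inequality $\mu\geq|\nabla u|^{p}+\tilde\mu$, where $\tilde\mu$ is the weak-$*$ limit (extracted along a subsequence) of $|\nabla v_n|^{p}$; this is exactly why (b) is an inequality and (a) an equality.

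\emph{Stage 2 (reverse Hölder).} For any Lipschitz $\phi$ with compact support, apply the Sobolev inequality to the function $\phi v_n$. If the support of $\phi$ lies in an open set $U$ with $\overline U\subset\Theta$, the usual Sobolev inequality on $\mathbb{R}^N$ gives
\[
S_p\Bigl(\int_\Theta|\phi v_n|^{p^*}\,dx\Bigr)^{p/p^*}\leq \int_\Theta|\nabla(\phi v_n)|^{p}\,dx.
\]
Expanding the right-hand side, using $v_n\to 0$ in $L^p_{loc}$ and letting $n\to\infty$ (and then taking the lim sup) produces
\[
S_p\Bigl(\int_{\overline{\Theta}}|\phi|^{p^*}\,d\tilde\nu\Bigr)^{p/p^*}\leq \int_{\overline{\Theta}}|\phi|^{p}\,d\tilde\mu.
\]
If instead $\phi$ is supported near a boundary point, I would straighten $\partial\Theta$ locally and extend $v_n$ by even reflection across the flattened boundary; the extended function $V_n$ satisfies $\|V_n\|_{L^{p^*}(\mathbb{R}^N)}^{p^*}=2\|v_n\|_{L^{p^*}}^{p^*}$ and $\|\nabla V_n\|_{L^p(\mathbb{R}^N)}^{p}=2\|\nabla v_n\|_{L^p}^{p}$ modulo errors vanishing with the support, so Sobolev on $\mathbb{R}^N$ combined with $1/p-1/p^*=1/N$ yields the weaker inequality with constant $S_p/2^{p/N}$ in place of $S_p$. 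This shrinking of the constant at the boundary is the main technical obstacle and is where smoothness of $\partial\Theta$ is used.

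\emph{Stage 3 (atomic structure).} With the reverse Hölder inequalities in hand, I invoke the standard measure-theoretic fact: if $\tilde\mu,\tilde\nu$ are finite non-negative Borel measures on a metric space with
\[
\bigl(\smallint|\phi|^{p^*}d\tilde\nu\bigr)^{p/p^*}\leq K\smallint|\phi|^{p}d\tilde\mu
\]
for all $\phi\in C_c$, then $\tilde\nu$ is purely atomic, $\tilde\nu=\sum_{j\in J}\nu_j\delta_{x_j}$, and at each atom $\tilde\mu(\{x_j\})\geq K^{-1}\nu_j^{p/p^*}$. Applied separately with the two constants obtained in Stage 2 and then combined with Stage 1, this yields (a)--(d); the index set $J$ is at most countable because $\tilde\nu$ is a finite measure. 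The only delicate point is to check that no atom of $\tilde\nu$ can be detected with the interior constant while actually lying on $\partial\Theta$, which is immediate from the localisation of the test $\phi$ around each $x_j$.
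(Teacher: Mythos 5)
Your proposal reconstructs, in outline, exactly the argument that the paper delegates to its references: the paper gives no proof of the lemma itself, stating only that it ``follows by combining the arguments explored in Struwe [Chapter I, Section 4] and Cherrier's inequality'' (Lemma~\ref{Ch}), and your three stages are precisely the Struwe/Lions machinery (Brezis--Lieb decoupling, reverse H\"older estimate, measure-theoretic atomicity lemma), with the boundary reverse-H\"older bound obtained by flattening and even reflection --- which is the mechanism behind Cherrier's inequality and does produce the halved constant via $1-p/p^*=p/N$. One small correction to Stage~1: $\mu\ge|\nabla u|^p$ is more cleanly obtained from weak lower semicontinuity of $w\mapsto\int\phi^p|\nabla w|^p\,dx$ on $W^{1,p}$ (a Brezis--Lieb splitting of $|\nabla u_n|^p$ would require a.e.\ convergence of $\nabla u_n$, which is not available), and the Dirac masses in~(b) are then appended by observing that they are mutually singular with $|\nabla u|^p\,dx$; aside from this, the plan is sound and matches the paper's intended route.
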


The proof of the above lemma follows by combining the arguments explored in Struwe \cite[Chapter I, Section 4]{Struwe} and the following Cherrier's inequality \cite{C} below.

\begin{lemma} \label{Ch} Let $\Theta \subset \mathbb{R}^N$ be a smooth bounded domain and $p \in (1,N)$. Then for each $\tau>0$, there is $M_{\tau}>0$ such that
	$$
	\left[\frac{S_p}{2^{\frac{p}{N}}}-\tau\right]\|u\|^{p}_{L^{p^*}(\Theta)} \leq  \|\nabla u\|^{p}_{L^{p}(\Theta)}+M_{\tau}\|u\|^{p}_{L^{p}(\Theta)}, \quad \forall\,u \in W^{1,p}(\Theta).
	$$
\end{lemma}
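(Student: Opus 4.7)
The plan is a classical localization/flattening argument: on pieces of $\Theta$ away from $\partial\Theta$ the usual Sobolev inequality with constant $S_p$ applies, while on pieces touching $\partial\Theta$ the optimal constant degrades to $S_p/2^{p/N}$ because such pieces can be identified, after flattening, with functions on a half-space, and half-space functions doubled by reflection lose exactly a factor $2^{p/N}$ in the Sobolev constant. The loss parameter $\tau$ is absorbed by shrinking the charts so the flattening diffeomorphism is as close to an isometry as we wish.

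First I would set up the combinatorics. Fix $\tau>0$. Choose a finite open cover $\{U_i\}_{i=0}^{m}$ of $\overline\Theta$ such that $\overline{U_0}\subset\Theta$ and, for $i\ge 1$, $U_i$ is a small ball centered on $\partial\Theta$ on which there is a $C^1$-diffeomorphism $\Psi_i:U_i\to B_1(0)$ straightening the boundary, i.e.\ $\Psi_i(U_i\cap\Theta)=B_1^+:=B_1\cap\{y_N>0\}$ and $\Psi_i(U_i\cap\partial\Theta)=B_1\cap\{y_N=0\}$. By shrinking the $U_i$ we may assume that $\|D\Psi_i - R_i\|_\infty$ and $\|D\Psi_i^{-1}-R_i^{-1}\|_\infty$ are as small as we please (with $R_i$ a rotation), so the push-forward distorts $\|\nabla\cdot\|_p^p$ and $\|\cdot\|_{p^*}^{p^*}$ by factors $(1+o(1))$ as $\mathrm{diam}(U_i)\to 0$. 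Take a subordinate partition of unity $\{\eta_i\}$ with $\sum\eta_i^{p^*}=1$ on $\overline\Theta$ and $\sum\eta_i^p\le C$; the standard trick of using $\eta_i^{p^*}$ as the partition of the $L^{p^*}$ norm (rather than $\eta_i$) will let me avoid extra $L^p$ terms in the main inequality and dump everything into the $M_\tau\|u\|_p^p$ correction via $\|\nabla(\eta_i u)\|_p\le \|\nabla u\|_p + C\|u\|_p$.

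Next I would estimate each localized piece. For $i=0$, since $\eta_0 u\in W^{1,p}_0(\mathbb R^N)$ after zero extension, the full-space Sobolev inequality gives $S_p\|\eta_0 u\|_{p^*}^p \le \|\nabla(\eta_0 u)\|_p^p$, hence
$$
S_p\|\eta_0 u\|_{p^*}^p \le \|\eta_0 \nabla u\|_p^p + C\|u\|_p^p,
$$
which is stronger than we need (constant $S_p>S_p/2^{p/N}$). For $i\ge 1$, write $v_i=(\eta_i u)\circ \Psi_i^{-1}$, a function on $B_1^+$ vanishing on $\partial B_1\cap\{y_N>0\}$. Extend $v_i$ by even reflection across $\{y_N=0\}$ to $\widetilde v_i\in W^{1,p}(\mathbb R^N)$ with compact support in $B_1$. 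Then
$$
\|\widetilde v_i\|_{L^{p^*}}^{p^*}=2\|v_i\|_{L^{p^*}(B_1^+)}^{p^*},\qquad \|\nabla\widetilde v_i\|_{L^p}^p=2\|\nabla v_i\|_{L^p(B_1^+)}^p,
$$
so Sobolev on $\mathbb R^N$ yields $S_p\,2^{p/p^*}\|v_i\|_{p^*}^p \le 2\|\nabla v_i\|_p^p$, i.e.\ $\frac{S_p}{2^{p/N}}\|v_i\|_{p^*}^p \le \|\nabla v_i\|_p^p$ since $1-p/p^*=p/N$. Transporting back through $\Psi_i$ and absorbing the $(1+o(1))$ chart distortion gives
$$
\Bigl(\frac{S_p}{2^{p/N}}-\tfrac{\tau}{2}\Bigr)\|\eta_i u\|_{p^*}^p \le \|\eta_i\nabla u\|_p^p + C_\tau\|u\|_p^p.
$$

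Finally I would assemble. Using $\sum\eta_i^{p^*}=1$ and the elementary inequality $\bigl(\sum a_i\bigr)^{1/p^*}\le \bigl(\sum a_i^{p^*/p^*}\bigr)^{1/p^*}$ applied in the appropriate direction, one has $\|u\|_{p^*}^p \le \sum_i \|\eta_i u\|_{p^*}^p$ (up to a combinatorial constant which is also absorbable into $\tau$ by a subtler partition, or handled by concavity of $t\mapsto t^{p/p^*}$ as in Struwe's treatment). Summing the localized estimates and using $\sum \eta_i^p\le C$, I obtain
$$
\Bigl(\tfrac{S_p}{2^{p/N}}-\tau\Bigr)\|u\|_{L^{p^*}(\Theta)}^p \le \|\nabla u\|_{L^p(\Theta)}^p + M_\tau\|u\|_{L^p(\Theta)}^p,
$$
as desired.

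The main technical obstacle is the last assembly step: the $L^{p^*}$ norm does not localize additively with $p$-th powers, so one must either use a partition where $\sum\eta_i^{p^*}=1$ together with the boundary-flattening estimate applied to $\eta_i u$ (not $\eta_i^{1/p^*}u$) and absorb the resulting cross-terms via $\tau$, or use the concentration-compactness style argument covering $\overline\Theta$ by finitely many balls on which $\|u\|_{L^{p^*}}$ is small enough to iterate. Careful bookkeeping of the small parameters (ball radius $\leadsto$ chart distortion $\leadsto$ loss in Sobolev constant) is what makes the constant arbitrarily close to $S_p/2^{p/N}$ at the cost of a possibly large $M_\tau$.
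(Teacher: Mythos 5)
The paper does not prove this lemma; it cites Cherrier \cite{C} and leaves it as a black box. So your sketch is a fresh attempt, and I will assess it against the standard argument.

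Your core mechanism is right: flatten the boundary near each chart, extend by even reflection to pick up a factor $2$ in both $\|\cdot\|_{p^*}^{p^*}$ and $\|\nabla\cdot\|_p^p$, apply the sharp full-space Sobolev inequality, and observe that $1-p/p^*=p/N$ turns the two factors of $2$ into exactly $2^{p/N}$. That computation is correct and is precisely where the exponent $p/N$ in Cherrier's constant comes from. You also correctly flag the assembly step as the real danger, and your proposed bound $\|u\|_{p^*}^p\le\sum_i\|\eta_i u\|_{p^*}^p$ does hold for a partition with $\sum_i\eta_i^{p^*}=1$, by subadditivity of $t\mapsto t^{p/p^*}$.

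The gap is that this normalization of the partition cannot deliver the constant $S_p/2^{p/N}$. With $\sum_i\eta_i^{p^*}=1$ you must control $\sum_i\|\eta_i\nabla u\|_p^p=\int\big(\sum_i\eta_i^p\big)|\nabla u|^p$, and the overlap constant $\sup_x\sum_i\eta_i(x)^p$ is \emph{not} close to $1$. Indeed, since $\eta_i\le 1$ and $p<p^*$ force $\eta_i^p\ge\eta_i^{p^*}$, we always have $\sum_i\eta_i^p\ge 1$, and at a point where exactly two charts overlap with equal weight the normalization $\sum\eta_i^{p^*}=1$ forces $\eta_1=\eta_2=2^{-1/p^*}$, so $\sum_i\eta_i^p=2\cdot 2^{-p/p^*}=2^{p/N}$. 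This overcounting sits multiplicatively on $\|\nabla u\|_p^p$, not additively next to $\|u\|_{p^*}^p$, so it cannot be pushed into the $\tau$-perturbation: you end up proving $(S_p/2^{p/N}-\tau)\|u\|_{p^*}^p\le 2^{p/N}\|\nabla u\|_p^p+M\|u\|_p^p$, i.e.\ the worse constant $S_p/2^{2p/N}$. Shrinking the charts reduces the metric distortion of $\Psi_i$, but any finite cover of $\partial\Theta$ has overlaps of multiplicity at least two, so the loss is structural, not a bookkeeping detail.

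The fix is a different normalization: choose the partition so that $\sum_i\eta_i^{\,p}=1$ (take smooth $\gamma_i\ge 0$ subordinate to the cover with $\sum\gamma_j>0$ and set $\eta_i=\gamma_i/\big(\sum_j\gamma_j^{\,p}\big)^{1/p}$, which is smooth). Then both ends of the estimate are exact: on the one hand $\|u\|_{p^*}^p=\big\|\,\sum_i\eta_i^p|u|^p\big\|_{L^{p^*/p}}\le\sum_i\|\eta_i u\|_{p^*}^p$ by the triangle inequality in $L^{p^*/p}$, and on the other $\sum_i\|\eta_i\nabla u\|_p^p=\int\big(\sum_i\eta_i^p\big)|\nabla u|^p=\|\nabla u\|_p^p$ with no overlap constant at all. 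The only remaining slacks are the chart distortion and the cross terms $u\nabla\eta_i$ from $\|\nabla(\eta_i u)\|_p^p\le(1+\delta)\|\eta_i\nabla u\|_p^p+C_\delta\|u\nabla\eta_i\|_p^p$, both of which really are absorbable into $\tau$ and $M_\tau$ respectively. This is the Aubin/Lee--Parker device, and without it your outline does not close.
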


The second  result that we would like to point out is a version of Trundiger-Moser inequality in $W^{1,N}(\Theta)$ due to Cianchi \cite[Theorem 1.1]{Cianchi}.
\begin{lemma} \label{TM} Let $\Theta \subset \mathbb{R}^N$ be a smooth bounded domain for $N \geq 2$ and $ u \in W^{1,N}(\Theta)$. Then, there is a constant $C(\Theta)>0$ such that
\begin{equation} \label{TM1}
\int_{\Theta}e^{\alpha_N\left(\frac{|u-u_{\Theta}|}{\|\nabla u\|_{L^{N}(\Theta)}}\right)^{N'}}\,dx \leq C(\Theta),
\end{equation}
where $u_{\Theta}=\frac{1}{|\Theta|}\int_{\Theta}u\,dx$ is the mean value of $u$ in $\Theta$, $\alpha_N=N\left(\frac{w_N}{2}\right)^{\frac{1}{N}}$ and $w_N$ is the volume of sphere $S^{N-1}$. The integral on the left-hand of \eqref{TM1} is finite for each $u \in W^{1,N}(\Theta)$ even if $\alpha_N$ is replaced by any other small positive number, but no inequality of type   \eqref{TM1} can hold with a large constant in the place of $\alpha_N$.
\end{lemma}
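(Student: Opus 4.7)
The statement is Cianchi's sharp Trudinger--Moser inequality for $W^{1,N}(\Theta)$ on a bounded domain, and my plan is to reproduce his rearrangement-based proof. The first step is to reduce to mean-zero functions: replacing $u$ by $v := u - u_\Theta$ leaves both sides of \eqref{TM1} invariant, so after the further normalization $\|\nabla v\|_{L^N(\Theta)} = 1$ the claim reduces to proving a uniform bound
\[
\int_\Theta \exp\bigl(\alpha_N |v|^{N/(N-1)}\bigr)\, dx \leq C(\Theta)
\]
for every $v \in W^{1,N}(\Theta)$ with $v_\Theta = 0$ and $\|\nabla v\|_{L^N(\Theta)} \leq 1$.

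Next I would pass to a one-dimensional decreasing rearrangement adapted to $\Theta$ via the distribution function. The P\'olya--Szeg\H{o} type inequality preserves the $L^N$ norm of the gradient, and the zero-mean constraint on $v$ forces $v$ to change sign: the superlevel set $\{v>0\}$ and the sublevel set $\{v<0\}$ each have measure at most $|\Theta|/2$, and on each piece the problem collapses to estimating a nonnegative function whose support has measure at most $|\Theta|/2$ and whose gradient $L^N$ norm is bounded by $1$. This is the classical setup for Moser's lemma, applied separately to $v_+$ and $v_-$.

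The heart of the argument, and the origin of the sharp constant $\alpha_N = N(w_N/2)^{1/N}$, is the relative isoperimetric inequality in $\Theta$: for any measurable $E \subset \Theta$ with $|E| \leq |\Theta|/2$ one has $P(E;\Theta) \geq N(w_N/2)^{1/N} |E|^{(N-1)/N}$. Plugging this estimate into the Moser change of variables $\phi(s) := v^*(|\Theta| e^{-s}/2)$ converts the exponential integral into a one-dimensional bound controlled by $\int_0^\infty |\phi'(s)|^N ds \leq 1$, and Moser's one-dimensional exponential lemma then yields the bound with exactly the exponent $\alpha_N$. The factor $1/2^{1/N}$ hidden in $\alpha_N$, compared with the Dirichlet Moser constant on $W_0^{1,N}$, encodes the fact that up to half of the boundary of a level set may lie on $\partial\Theta$.

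The main obstacle is securing the relative isoperimetric inequality with the sharp constant; this is the genuinely new content of Cianchi's paper and requires a delicate analysis of sets that meet $\partial\Theta$ through a symmetrization in a half-space. Once this ingredient is in hand, the exponential estimate follows by standard Moser iteration, and sharpness of $\alpha_N$ (that the inequality fails for any larger constant) is verified separately by testing against Moser concentration sequences $u_n(x) = \log^{(N-1)/N}(1/|x-x_0|)$ suitably truncated and centered at a boundary point $x_0 \in \partial\Theta$.
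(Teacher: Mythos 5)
The paper does not prove this lemma --- it is quoted verbatim from Cianchi \cite[Theorem 1.1]{Cianchi} and then used as a black box --- so there is no in-paper argument for your sketch to be compared against. Judged on its own terms as an outline of Cianchi's proof, your proposal contains two substantive errors.

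First, the reduction to a nonnegative function whose support has measure at most $|\Theta|/2$ does not follow from the zero-mean normalization. If $v_\Theta=0$ and $v\not\equiv 0$, then $v$ changes sign, but nothing prevents $|\{v>0\}|$ from being arbitrarily close to $|\Theta|$: take $v$ to be very negative on a tiny subset of $\Theta$ and slightly positive on the rest. The quantity whose subtraction guarantees both $|\{v>0\}|\leq|\Theta|/2$ and $|\{v<0\}|\leq|\Theta|/2$ is the median of $v$, not the mean. Any proof along these lines must first pass from $u-u_\Theta$ to $u-\mathrm{med}(u)$ (controlling the discrepancy by a Poincar\'e-type bound in terms of $\|\nabla u\|_{L^N(\Theta)}$) before splitting into positive and negative parts.

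Second, the relative isoperimetric inequality you invoke, namely $P(E;\Theta)\geq N\bigl(w_N/2\bigr)^{1/N}|E|^{(N-1)/N}$ for every measurable $E\subset\Theta$ with $|E|\leq|\Theta|/2$, is false for general smooth bounded domains: a long thin rectangle already violates it for $|E|$ near $|\Theta|/2$, where a straight cut has perimeter much smaller than $N(w_N/2)^{1/N}|E|^{(N-1)/N}$. The sharp constant $N(w_N/2)^{1/N}$ appears only as the limit of the isoperimetric profile $\lambda_\Theta(s)/s^{(N-1)/N}$ as $s\to 0^+$, reflecting the half-space geometry at a smooth boundary point; away from $s=0$ the profile can be far worse, and Cianchi's argument has to absorb that into the $\Theta$-dependent constant $C(\Theta)$ rather than into the exponent. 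Replacing that asymptotic statement by a uniform isoperimetric bound, as your sketch does, would ``prove'' a false inequality. These two points are not cosmetic: the first breaks the setup for the one-dimensional Moser lemma, and the second is exactly where the value $\alpha_N$ comes from.
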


From Lemma \ref{TM}, for each $u \in W^{1,N}(\Theta)$, we have
\begin{equation} \label{E0}
e^{t|u|^{N'}} \in L^{1}(\Theta), \quad \forall\,t \geq 0.
\end{equation}

For the reader interested in Trudinger-Moser inequality for functions in $W^{1,N}(\Theta)$, we would like to cite the papers due to Adimurthi and Yadava \cite{AY},  Kaur and Sreenadh \cite{KS} and their references.

As a consequence of Lemma \ref{TM}, we have the following two results whose proof can be found in \cite{AGR}.

\begin{lemma} \label{TM2} Given $t >1$ and $\alpha>0$, there is $r \in (0,1)$ and $C=C(t,r,N)>0$ such that
\begin{equation} \label{TM3}
\sup\left\{\int_{\Theta}e^{t\alpha|u|^{N'}}\,dx\,:\, u \in W^{1,N}(\Theta), \,\,  \|\nabla u\|_{L^{N}(\Theta)}\leq r \quad \mbox{and} \quad \|u\|_{L^{1}(\Theta)} \leq r \right\}\leq C.
\end{equation}	
\end{lemma}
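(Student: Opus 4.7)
The plan is to reduce the inequality to the Cianchi–Trudinger–Moser inequality (Lemma \ref{TM}) by splitting $u$ into its mean value plus its zero-mean part. More precisely, for $u \in W^{1,N}(\Theta)$ write $u = (u - u_\Theta) + u_\Theta$, where $u_\Theta = \frac{1}{|\Theta|}\int_\Theta u\,dx$. The hypothesis $\|u\|_{L^1(\Theta)}\le r\le 1$ immediately gives $|u_\Theta| \le r/|\Theta|$, so the constant part of $u$ is harmless; the main term to control is $|u - u_\Theta|$.

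For the splitting to produce a clean exponential bound, I would use the elementary inequality
\begin{equation*}
(a+b)^{N'} \le (1+\varepsilon)\,a^{N'} + C_\varepsilon\,b^{N'}, \qquad a,b\ge 0,\ \varepsilon>0,
\end{equation*}
which is valid since $N' = N/(N-1) > 1$. Applying this with $a = |u-u_\Theta|$ and $b = |u_\Theta|$, then multiplying by $t\alpha$ and exponentiating,
\begin{equation*}
e^{t\alpha|u|^{N'}} \le e^{t\alpha C_\varepsilon |u_\Theta|^{N'}}\cdot e^{t\alpha(1+\varepsilon)|u-u_\Theta|^{N'}}.
\end{equation*}
The first factor is bounded by a constant depending only on $t,\alpha,r,|\Theta|,N$ (using $|u_\Theta|\le r/|\Theta|\le 1/|\Theta|$). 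For the second factor, the idea is to choose $r$ so small that
\begin{equation*}
t\alpha(1+\varepsilon)\|\nabla u\|_{L^N(\Theta)}^{N'} \;\le\; \alpha_N,
\end{equation*}
which allows us to dominate $t\alpha(1+\varepsilon)|u-u_\Theta|^{N'}$ by $\alpha_N\bigl(|u-u_\Theta|/\|\nabla u\|_{L^N(\Theta)}\bigr)^{N'}$. Explicitly I would fix (say) $\varepsilon=1$, and choose
\begin{equation*}
r \le \min\Bigl\{1,\; \bigl(\tfrac{\alpha_N}{2 t\alpha}\bigr)^{1/N'}\Bigr\}.
\end{equation*}
Then the condition $\|\nabla u\|_{L^N(\Theta)}\le r$ yields the desired inequality, and Lemma \ref{TM} applied to $u - u_\Theta$ (which has zero mean, so its Cianchi quotient is well defined and bounded by the universal constant $C(\Theta)$) gives
\begin{equation*}
\int_\Theta e^{t\alpha(1+\varepsilon)|u-u_\Theta|^{N'}}\,dx \;\le\; \int_\Theta e^{\alpha_N(|u-u_\Theta|/\|\nabla u\|_{L^N(\Theta)})^{N'}}\,dx \;\le\; C(\Theta).
\end{equation*}
Multiplying by the earlier bound on the constant factor yields the claim with $C = C(t,r,N,|\Theta|)$.

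The only delicate step is making sure $\|\nabla u\|_{L^N(\Theta)}\ne 0$ so that Lemma \ref{TM} applies verbatim; if $\nabla u\equiv 0$ then $u\equiv u_\Theta$ is constant and the bound is trivial, so this case is handled separately. Beyond that, the argument is a bookkeeping exercise: all that matters is picking $r$ small enough in terms of $t,\alpha$ to absorb the loss produced by the inequality $(a+b)^{N'}\le (1+\varepsilon)a^{N'} + C_\varepsilon b^{N'}$ into the Cianchi constant $\alpha_N$. I expect no real obstacle; the proof is essentially a one-paragraph consequence of Lemma \ref{TM} combined with Poincaré's mean-value estimate $|u_\Theta|\le \|u\|_{L^1(\Theta)}/|\Theta|$.
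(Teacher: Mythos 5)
Your proof is correct and uses what is essentially the only natural route: split $u$ into its mean plus its zero-mean part, control $|u_\Theta|$ by the $L^1$ bound, absorb the factor $t\alpha(1+\varepsilon)$ into $\alpha_N$ by choosing $r$ small in terms of $t$ and $\alpha$, and then apply Cianchi's inequality (Lemma~\ref{TM}) to the zero-mean part. The paper defers the proof to \cite{AGR}, but since the lemma is stated explicitly as a consequence of Lemma~\ref{TM}, the argument there is the same mean-value-subtraction reduction, so your approach matches.
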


\begin{lemma} \label{TM4} Let $\alpha>0$ and $(u_n) \subset W^{1,N}(\Theta)$ be a sequence satisfying $\|\nabla u_n\|^{N'}_{L^{N}(\Theta)}\leq \frac{\tau}{2^{N'}}\frac{\alpha_N}{\alpha}$ and $\|u_n\|_{L^{1}(\Theta)} \leq M$ for some $\tau \in (0,1)$ and $M>0$. Then, there is $t >1$ with $t \approx 1$ such that
\begin{equation} \label{TM5}
\sup_{n \in\mathbb{N}}\int_{\Theta}e^{t\alpha|u_n|^{N'}}\,dx<+\infty.
\end{equation}
Hence, the sequence $f_n(x)=e^{\alpha|u_n(x)|^{N'}}$ is bounded in $L^{t}(\Theta)$.	
\end{lemma}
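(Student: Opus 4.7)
The plan is to decompose each $u_n$ as its mean value plus a zero-mean part, and apply Cianchi's inequality (Lemma \ref{TM}) to the zero-mean part while controlling the mean by the $L^1$ bound. Set $m_n=\frac{1}{|\Theta|}\int_\Theta u_n\,dx$, so that by hypothesis
$$
|m_n|\le \frac{1}{|\Theta|}\|u_n\|_{L^1(\Theta)}\le \frac{M}{|\Theta|}.
$$
First I would invoke the elementary convexity inequality: for any $\epsilon>0$ there exists $C_\epsilon>0$ such that $(a+b)^{N'}\le(1+\epsilon)a^{N'}+C_\epsilon b^{N'}$ for all $a,b\ge 0$, and apply it to $|u_n|\le |u_n-m_n|+|m_n|$ to obtain
$$
t\alpha|u_n|^{N'}\ \le\ t\alpha(1+\epsilon)|u_n-m_n|^{N'} + t\alpha C_\epsilon\,(M/|\Theta|)^{N'}.
$$
The second term is a constant, independent of $n$, so it contributes only a harmless multiplicative factor to the exponential integral.

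Next I would rewrite the remaining exponent by factoring out the gradient norm:
$$
t\alpha(1+\epsilon)|u_n-m_n|^{N'} = \Bigl(t\alpha(1+\epsilon)\|\nabla u_n\|_{L^N(\Theta)}^{N'}\Bigr)\left(\frac{|u_n-m_n|}{\|\nabla u_n\|_{L^N(\Theta)}}\right)^{N'}.
$$
By the hypothesis $\alpha\|\nabla u_n\|_{L^N(\Theta)}^{N'}\le \tfrac{\tau}{2^{N'}}\alpha_N$, the bracketed coefficient is at most $t(1+\epsilon)\tfrac{\tau}{2^{N'}}\alpha_N$. Since $\tau\in(0,1)$ and $N'>1$ gives $2^{N'}/\tau>1$, I can choose $t>1$ and $\epsilon>0$ both so small that
$$
t(1+\epsilon)\frac{\tau}{2^{N'}}\le 1, \qquad \text{i.e.,}\qquad t(1+\epsilon)\alpha\|\nabla u_n\|_{L^N(\Theta)}^{N'}\le \alpha_N.
$$
Then Lemma \ref{TM}, applied to each $u_n$ (noting $u_n-m_n$ has zero mean so $(u_n-m_n)_\Theta=0$), furnishes a uniform bound
$$
\int_\Theta e^{t\alpha(1+\epsilon)|u_n-m_n|^{N'}}\,dx\ \le\ C(\Theta),
$$
and multiplying by the constant from the mean contribution yields the desired uniform estimate \eqref{TM5}. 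The final assertion that $f_n=e^{\alpha|u_n|^{N'}}$ is bounded in $L^t(\Theta)$ is immediate since $\|f_n\|_{L^t(\Theta)}^{t}=\int_\Theta e^{t\alpha|u_n|^{N'}}\,dx$.

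The only delicate point is the arithmetic that guarantees $t$ can actually be taken strictly greater than $1$: this relies crucially on the factor $2^{N'}$ in the hypothesis (which comes from the "$1/2$" loss in the sharp Neumann-type constant $\alpha_N/2^{N'-1}$) together with the strict inequality $\tau<1$, leaving a definite gap that absorbs both the slack $(1+\epsilon)$ from the convexity inequality and a small excess $t-1>0$. No other step is subtle.
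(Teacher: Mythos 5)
Your proof is correct, and it is precisely the natural argument one would expect given the shape of the hypothesis: decompose $u_n$ into its mean $m_n$ plus the zero-mean fluctuation, control the mean by the $L^1$ bound, and feed the fluctuation into Cianchi's inequality (Lemma~\ref{TM}) after checking that the coefficient in front of $\bigl(|u_n-m_n|/\|\nabla u_n\|_{L^N}\bigr)^{N'}$ does not exceed $\alpha_N$. The paper itself defers the proof of this lemma to \cite{AGR}, so there is no in-text proof to compare against line by line, but this is essentially the only sensible route and matches the role the hypothesis on $\|\nabla u_n\|_{L^N}^{N'}$ is designed to play.

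One small inaccuracy in your closing remark: you say the possibility of taking $t>1$ ``relies crucially on the factor $2^{N'}$'' in the hypothesis, but with the refined convexity bound $(a+b)^{N'}\le(1+\epsilon)a^{N'}+C_\epsilon b^{N'}$ that you actually use, the condition $\tau<1$ alone already leaves room for $t>1$ (you need $t(1+\epsilon)\tau/2^{N'}\le 1$, and $\tau/2^{N'}<1$ holds a fortiori). The factor $2^{N'}$ is generous enough that even the cruder bound $(a+b)^{N'}\le 2^{N'-1}\bigl(a^{N'}+b^{N'}\bigr)$ would close the argument, which is likely why it appears in the hypothesis; but your sharper convexity inequality is perfectly fine and in fact gives more slack. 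You might also add a one-line remark dispatching the degenerate case $\|\nabla u_n\|_{L^N(\Theta)}=0$ (then $u_n$ is constant, bounded by $M/|\Theta|$, and the estimate is trivial), so that the division by $\|\nabla u_n\|_{L^N(\Theta)}$ in the rescaling step is legitimate.
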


As a consequence of Lemma \ref{TM4}, we have the corollary below.

\begin{corollary} \label{TM6} Let $(u_n) \subset W^{1,N}(\Theta)$ be a sequence as in Lemma  \ref{TM4}. If $u_n(x) \to u(x)$ a.e. in $\Theta$, then $f_n \rightharpoonup f$ in $L^{t}(\Theta)$ where $f(x)=e^{\alpha|u(x)|^{N'}}$, that is,
$$
\int_{\Theta}f_n \varphi \,dx \to \int_{\Theta}f \varphi \,dx, \quad \forall\,\varphi \in L^{t'}(\Theta),
$$		
where $\frac{1}{t}+\frac{1}{t'}=1$.	
\end{corollary}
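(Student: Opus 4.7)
The plan is to combine the uniform $L^{t}$-bound supplied by Lemma \ref{TM4} with the pointwise a.e.\ convergence $u_{n}\to u$ to identify $f$ as the (unique) weak limit of $(f_{n})$ in $L^{t}(\Theta)$. First, since the map $s\mapsto e^{\alpha|s|^{N'}}$ is continuous and $u_{n}(x)\to u(x)$ almost everywhere in $\Theta$, I obtain $f_{n}(x)\to f(x)$ a.e.\ in $\Theta$. Set $M_{0}:=\sup_{n}\|f_{n}\|_{L^{t}(\Theta)}$, which is finite by Lemma \ref{TM4}; Fatou's lemma then gives $\|f\|_{L^{t}(\Theta)}\leq M_{0}$, so in particular $f\in L^{t}(\Theta)$.

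To upgrade a.e.\ convergence to weak $L^{t}$-convergence, I would invoke Egoroff's theorem. Fix $\varphi\in L^{t'}(\Theta)$ and $\varepsilon>0$, and choose a measurable subset $E_{\varepsilon}\subset\Theta$ with $|\Theta\setminus E_{\varepsilon}|<\varepsilon$ on which $f_{n}\to f$ uniformly. Uniform convergence on $E_{\varepsilon}$, combined with $\varphi\chi_{E_{\varepsilon}}\in L^{1}(\Theta)$, yields $\int_{E_{\varepsilon}}(f_{n}-f)\varphi\,dx\to 0$. On the complement, H\"older's inequality gives
$$\left|\int_{\Theta\setminus E_{\varepsilon}}(f_{n}-f)\varphi\,dx\right|\leq 2M_{0}\,\|\varphi\,\chi_{\Theta\setminus E_{\varepsilon}}\|_{L^{t'}(\Theta)},$$
and the right-hand side tends to $0$ as $\varepsilon\to 0^{+}$ by the absolute continuity of the $L^{t'}$-integral (since $\varphi\in L^{t'}(\Theta)$). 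Combining the two bounds produces $\int_{\Theta}f_{n}\varphi\,dx\to\int_{\Theta}f\varphi\,dx$ for every $\varphi\in L^{t'}(\Theta)$. Note that no subsequence extraction is required: the candidate limit $f$ is identified a priori from the pointwise convergence, so the full sequence must converge weakly.

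The only delicate point is the tail estimate on $\Theta\setminus E_{\varepsilon}$, and this is precisely where the uniform $L^{t}$-bound from Lemma \ref{TM4} is indispensable — a.e.\ convergence alone would be insufficient to control the contribution of the complement. I do not anticipate a substantive obstacle here; the real work has already been carried out in Lemma \ref{TM4}, which guarantees an exponent $t>1$ \emph{strictly} greater than $1$, so that both the reflexivity of $L^{t}(\Theta)$ and the absolute continuity of the $L^{t'}$-integral can be exploited. This is essentially the standard Vitali--Egoroff passage from pointwise to weak convergence, specialized to the exponential composition $f_{n}=e^{\alpha|u_{n}|^{N'}}$.
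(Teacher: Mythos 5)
Your proof is correct and is the natural argument: pointwise a.e.\ convergence of $f_{n}=e^{\alpha|u_{n}|^{N'}}$ to $f=e^{\alpha|u|^{N'}}$ (by continuity of the exponential), Fatou to place $f\in L^{t}(\Theta)$, and then the Egoroff/H\"older splitting over $E_{\varepsilon}$ and $\Theta\setminus E_{\varepsilon}$. The paper states the corollary without proof (as a consequence of Lemma \ref{TM4}), so there is no explicit argument in the text to compare against; but what you wrote is one of the two standard routes. The other is the compactness route: since $(f_{n})$ is bounded in the reflexive space $L^{t}(\Theta)$ with $t>1$, one extracts a weakly convergent subsequence, identifies its limit as $f$ via a.e.\ convergence (e.g.\ by testing against continuous compactly supported functions and using Vitali, or via Mazur's lemma), and then upgrades to the full sequence by uniqueness of the weak limit together with a subsequence-of-subsequence argument. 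Your direct Egoroff argument avoids the subsequence bookkeeping entirely, which you correctly note. One small imprecision: in your closing remark you invoke \emph{both} reflexivity of $L^{t}$ and absolute continuity of the $L^{t'}$-integral as the reason $t>1$ matters; in the Egoroff argument you actually carried out, reflexivity plays no role --- only $t'<\infty$ (hence absolute continuity of $\int_{A}|\varphi|^{t'}$ as $|A|\to0$) is used. That does not affect the correctness of the proof, but it is worth being precise about which property of $t>1$ is actually doing the work.
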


Our next result will help us to conclude that the energy functional associated with the problem $(P)$ is $C^{1}(W_0^{1,\Phi}(\Omega),\mathbb{R})$. Since it follows as in Bezerra do \'O, Medeiros and Severo \cite[Proposition 1]{JEU}, we will omit its proof.

\begin{lemma}  \label{convegenceW} Let $(u_n) \subset W^{1,N}(\Theta)$ be a sequence such that $u_n \to u$ in $ W^{1,N}(\Theta)$ for some $u \in W^{1,N}(\Theta)$. Then, for some subsequence, still denoted by itself, there is $v \in W^{1,N}(\Theta)$ such that: \\
\noindent $(i)$\, $u_n(x) \to u(x)$ a.e. in $\Theta$. \\
\noindent $(ii)$\, $|u_n(x)| \leq v(x)\,\,$a.e. in $\Theta$ for all $n \in \mathbb{N}$.
\end{lemma}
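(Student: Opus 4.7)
The plan is to adapt the classical Riesz--Fischer argument, which shows that $L^p$--convergence forces a.e.\ convergence along a subsequence together with the existence of an $L^p$ dominating function, lifting the whole construction to the $W^{1,N}$ setting.

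Since $(u_n)$ converges, hence is Cauchy, in $W^{1,N}(\Theta)$, I would first extract a subsequence, still denoted by $(u_n)$, satisfying
\[
\|u_{n+1}-u_n\|_{W^{1,N}(\Theta)}\leq 2^{-n}, \qquad n\in \mathbb{N},
\]
and set
\[
v_m(x)=|u_1(x)|+\sum_{k=1}^{m}|u_{k+1}(x)-u_k(x)|.
\]
Each $v_m$ lies in $W^{1,N}(\Theta)$ by the Stampacchia-type chain rule $f\in W^{1,N}\Rightarrow|f|\in W^{1,N}$ with $|\nabla|f||\leq|\nabla f|$ a.e. Combining this with the triangle inequality gives
\[
\|v_m-v_{m'}\|_{W^{1,N}(\Theta)}\leq \sum_{k=\min(m,m')+1}^{\max(m,m')}2^{-k}\leq 2^{-\min(m,m')},
\]
so $(v_m)$ is Cauchy in $W^{1,N}(\Theta)$ and converges strongly to some $v\in W^{1,N}(\Theta)$.

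Next I would observe that $(v_m)$ is pointwise monotone increasing, hence admits an a.e.\ pointwise limit $\tilde v(x)\in[0,+\infty]$. Strong $L^N$--convergence $v_m\to v$ forces, via a further a.e.\ convergent subsequence and uniqueness of limits, the identification $\tilde v=v$ a.e., so in particular $v(x)<+\infty$ almost everywhere in $\Theta$. Consequently the numerical series $\sum_{k}|u_{k+1}(x)-u_k(x)|$ converges a.e., so $(u_n(x))$ is Cauchy a.e.\ and converges pointwise to some function; this pointwise limit must coincide with $u$ a.e.\ because any $L^N$--strongly convergent subsequence has a further a.e.\ convergent sub-subsequence with limit $u$. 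This gives assertion (i). For (ii), the triangle inequality yields
\[
|u_n(x)|\leq |u_1(x)|+\sum_{k=1}^{n-1}|u_{k+1}(x)-u_k(x)|=v_{n-1}(x)\leq v(x) \quad \text{a.e.\ in } \Theta.
\]

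The routine ingredients here are the Stampacchia chain rule and the Cauchy completeness of $W^{1,N}(\Theta)$; the only slightly delicate step, where I expect the main obstacle to lie, is the identification of the $W^{1,N}$--strong limit $v$ with the pointwise monotone limit $\tilde v$ of $v_m$, which requires carefully coupling Lebesgue's monotone convergence theorem with $L^N$--convergence (passing through a further a.e.\ convergent subsequence) to ensure that the strong limit in $W^{1,N}(\Theta)$ agrees with the pointwise supremum almost everywhere.
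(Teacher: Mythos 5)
Your proof is correct: it is the classical Riesz--Fischer construction (rapidly Cauchy subsequence, telescoping majorant $v_m$, identification of the monotone pointwise limit with the strong $L^N$--limit to get finiteness a.e.), lifted from $L^N$ to $W^{1,N}$ by the pointwise inequality $|\nabla|f||\le|\nabla f|$. The paper itself omits the proof, pointing only to \cite[Proposition 1]{JEU}, whose argument is precisely the one you reconstructed, so your approach matches the reference.
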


\section{Preliminaries for the proof of Theorem \ref{T1}}
To prove Theorem \ref{T1}, we use the following result, whose proof follows similar arguments as in Ambrosetti and Rabinowitz \cite{AmRb}. Let $X$ be a Banach space, $K\subset X$ be compact and $\gamma(Y)$ be the genus of $Y\subset\Sigma,$ where 
$$
\Sigma:=\{Y\in X\setminus\{0\}:Y\text{ is close in $X$ and symmetric with respect to the origin}\}.
$$
\begin{theorem}\label{AmRbthm}
Suppose $I\in C^1(X,\mathbb{R})$ satisfies:
\begin{enumerate}
\item[(a)] $I(0)=0$, $I(u)=I(-u)$ for all $u\in X.$
\item[(b)] there exists $\alpha,\rho>0$ such that 
$$
I(u)\geq\alpha\,\,\forall\,\,||u||=\rho.
$$
\item[(c)] for every $\hat{X}\subset X$ such that $\text{dim}\,\hat{X}<\infty$, there exists $R=R(\hat{X})>0$, such that $$
I(u)\leq 0,\text{ for every }u\in\hat{X}\setminus B_{R}(0);
$$
\item[(d)] there exists $M>0$, such that $I$ satisfies $(PS)_c$ condition, for any $0<c<M.$ 
\end{enumerate}
For each $m\in\mathbb{N}$, fix a finite dimensional subspace $X_m$  of $X$ and consider $R_m=R(X_m)>0$ given by condition $(c).$ Now, define
	\begin{equation}\label{compact}
		D_m=B_{R_m}\cap X_m,
	\end{equation}
	$$
	G_m:=\{h\in C(D_m,X):h\text{ is odd and }h(u)=u, \;\; \forall\,u\in\partial B_{R_m}\cap X_m\},
	$$
	\begin{equation}\label{gamma}
		\Gamma_j=\{h\Big(\overline{D_m\setminus Y}\Big);\,h\in G_m,\,m\geq j,\,Y\in\Sigma,\,\gamma(Y)\leq m-j\},
	\end{equation}
and 
$$
c_m:=\inf_{K\in\Gamma_m}\max_{u\in K}\,I(u).
$$
Then $0<\alpha\leq c_m\leq c_{m+1},$ and if $c_m<M,$ the levels $c_j$ for $j \in\{1,2,...,m\}$ are critical values of $I$. Moreover, if $c_1=c_2=\cdots=c_r=c<M,$ then $\gamma(K_0)>r.$
\end{theorem}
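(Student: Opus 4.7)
The plan is to follow the classical Lyusternik--Schnirelmann / Ambrosetti--Rabinowitz minimax scheme based on the Krasnoselskii genus and the equivariant (odd) deformation lemma. Because $I$ is even, $C^1$, and satisfies $(PS)_c$ for $c<M$, a standard argument produces, at any regular level $c\in(0,M)$, an odd continuous deformation $\eta\colon X\to X$ and $\varepsilon>0$ with $\eta(I^{c+\varepsilon})\subset I^{c-\varepsilon}$ and $\eta=\mathrm{id}$ outside $I^{-1}([c-2\varepsilon,c+2\varepsilon])$. This equivariance is what keeps the class $\Gamma_j$ invariant under $\eta$.

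First I would check that $0<\alpha\le c_m\le c_{m+1}$. Monotonicity is immediate from the inclusion $\Gamma_{m+1}\subset\Gamma_m$, since raising $j$ only tightens the constraint $\gamma(Y)\le m-j$. The lower bound rests on the standard \emph{intersection property}: for every $K=h(\overline{D_m\setminus Y})\in\Gamma_j$ one has $K\cap S_\rho\neq\emptyset$ whenever $\rho\in(0,R_m)$. This is a Borsuk--Ulam-type consequence of $h\in G_m$ being odd and equal to $\mathrm{id}$ on $\partial B_{R_m}\cap X_m$, together with $\dim X_m=m$ and $\gamma(Y)\le m-j<m$; concretely, one analyses the odd map $u\mapsto\|h(u)\|-\rho$ on $X_m$ and invokes the subadditivity and Borsuk--Ulam properties of the genus. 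Combined with hypothesis $(b)$, this yields $\max_K I\ge\alpha$, hence $c_j\ge\alpha$.

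Next I would prove that each $c_j<M$ is critical. Suppose not, and apply the equivariant deformation above at level $c_j$. Pick $K=h(\overline{D_m\setminus Y})\in\Gamma_j$ with $\max_K I\le c_j+\varepsilon$. Since $(c)$ yields $I\le 0$ on $\partial B_{R_m}\cap X_m$ while $c_j>0$, the deformation $\eta$ is the identity on $\partial B_{R_m}\cap X_m$ for small $\varepsilon$; hence $\eta\circ h\in G_m$, so $\eta(K)=(\eta\circ h)(\overline{D_m\setminus Y})\in\Gamma_j$, while $\max_{\eta(K)}I\le c_j-\varepsilon$, contradicting the definition of $c_j$.

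Finally, assume $c=c_1=\cdots=c_r<M$ and, toward a contradiction, that $\gamma(K_c)\le r-1$ (the version of the conclusion I can prove directly). By the standard continuity of the genus, there is a symmetric open neighborhood $U$ of $K_c$ with $\gamma(\overline U)=\gamma(K_c)\le r-1$. The refined equivariant deformation lemma now supplies $\varepsilon>0$ and an odd $\eta$ with $\eta(I^{c+\varepsilon}\setminus U)\subset I^{c-\varepsilon}$ and $\eta=\mathrm{id}$ outside a thin strip around level $c$. Choose $K=h(\overline{D_m\setminus Y})\in\Gamma_r$ with $\max_K I\le c+\varepsilon$ and $\gamma(Y)\le m-r$, and set $\widetilde Y:=Y\cup h^{-1}(\overline U)$. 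Subadditivity of the genus yields
\[
\gamma(\widetilde Y)\le(m-r)+(r-1)=m-1,
\]
so $(\eta\circ h)(\overline{D_m\setminus\widetilde Y})\in\Gamma_1$ with $I\le c-\varepsilon$ on it, contradicting $c_1=c$. The main obstacles are the intersection property underlying step two and verifying that $\eta\circ h$ really belongs to $G_m$; both hinge on $(c)$, which guarantees that $R_m$ can be chosen large enough for $I$ to be already $\le 0$ on $\partial B_{R_m}\cap X_m$, keeping the deformation inactive on that boundary sphere.
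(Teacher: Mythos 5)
Your proof is correct and is exactly the classical equivariant-deformation / genus argument of Rabinowitz (CBMS Lecture Notes 65, around Theorem~9.12), which the paper itself cites via \cite{AmRb} without reproducing a proof. The only discrepancy is in the final assertion: the paper's $\gamma(K_0)>r$ is evidently a misprint for $\gamma(K_c)\ge r$ (with $K_c$ the critical set at level $c$), and that is precisely what your argument with the removed neighborhood $U$ and the bound $\gamma(\widetilde Y)\le (m-r)+(r-1)$ establishes.
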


\subsection{Functional setting}
In what follows, we consider the associated energy functional $I:W_0^{1,\Phi}(\Omega) \to \mathbb{R}$ given by
$$
I(u)=\int_{\Omega}\Phi(x,|\nabla u|)\,dx-\int_{\Omega}F(x,u)\,dx,
$$
where $F(x,t)=\int_{0}^{t}f(x,s)\, ds, \, t \in \mathbb{R}$ and $f$ is either of the form $(f_1)$ or $(f_2)$. Here, we would like to mention that, for the rest of the article, whenever we deal with $(f_1)$, we assume $\Omega$ to be a smooth bounded domain in $\mathbb{R}^N$, $N\geq 2$ along with the hypothesis $(g_0)-(g_3)$ and  $(\varphi_1)-(\varphi_8)$ as in Theorem \ref{T1}. For $(f_2)$, we consider $\Omega=\Omega_r$, $N\geq 2,$ $N\neq 3$ along with the hypothesis $(g_1),(g_2),(g_4), (\eta)$ and $(\Omega_N)$ and $(\varphi_1)-(\varphi_{11})$ as in Theorem \ref{T2}.

\begin{lemma} \label{L1} Assume that $f$ is of form $(f_1)$ or $(f_2)$. Then, the functional $I$ belongs to $C^{1}(W_0^{1,\Phi}(\Omega),\mathbb{R})$ and
$$
I'(u)v=\int_{\Omega}\varphi(x,|\nabla u|)\nabla u\nabla v\,dx-\int_{\Omega}f(x,u)v\,dx, \quad \forall u,v \in W_0^{1,\Phi}(\Omega).
$$	
\end{lemma}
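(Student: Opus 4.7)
The plan is to split $I=J-\Psi$, with $J(u)=\int_{\Omega}\Phi(x,|\nabla u|)\,dx$ and $\Psi(u)=\int_{\Omega}F(x,u)\,dx$, and handle the two pieces separately. For $J$, the hypotheses $(\varphi_{1})$--$(\varphi_{5})$ place us in the standard Musielak--Sobolev framework, so that $J\in C^{1}(W_{0}^{1,\Phi}(\Omega),\mathbb{R})$ with $J'(u)v=\int_{\Omega}\varphi(x,|\nabla u|)\nabla u\cdot\nabla v\,dx$ is already known; see for instance \cite{Fan}. The entire task therefore reduces to proving that $\Psi\in C^{1}(W_{0}^{1,\Phi}(\Omega),\mathbb{R})$ with $\Psi'(u)v=\int_{\Omega}f(x,u)v\,dx$, and this part is essentially identical for both forms $(f_{1})$ and $(f_{2})$.

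\emph{Well-definedness.} From $(\varphi_{6})$ and $(\varphi_{8})$ one reads off $\Phi(x,t)\geq |t|^{N}/N$ on $\Omega_{N}$ and $\Phi(x,t)\geq |t|^{p}/p$ on $\Omega_{p}$, whence, together with Proposition \ref{modular1},
\[
W_{0}^{1,\Phi}(\Omega)\hookrightarrow W^{1,N}(\Omega_{N}),\qquad W_{0}^{1,\Phi}(\Omega)\hookrightarrow W^{1,p}(\Omega_{p})\hookrightarrow L^{p^{*}}(\Omega_{p}),
\]
continuously, while \eqref{EMB1} delivers $L^{\infty}$-control on the transition region. This handles the subcritical $\Omega_{q}$ pieces and the critical $|u|^{p^{*}}$ piece on $\Omega_{p}$. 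The exponential piece on $\Omega_{N}$ is controlled by H\"older combined with \eqref{E0}: for any $r>1$,
\[
\int_{\Omega_{N}}|u|^{\beta}e^{\alpha|u|^{N'}}\,dx\leq \Big(\int_{\Omega_{N}}|u|^{r'\beta}\,dx\Big)^{1/r'}\Big(\int_{\Omega_{N}}e^{r\alpha|u|^{N'}}\,dx\Big)^{1/r}<+\infty,
\]
the first factor being finite because $W^{1,N}(\Omega_{N})\hookrightarrow L^{s}(\Omega_{N})$ for every $s<\infty$.

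\emph{G\^ateaux derivative.} Fix $u,v\in W_{0}^{1,\Phi}(\Omega)$ and $t_{n}\to 0$. The mean value theorem produces measurable $\theta_{n}(x)\in(0,1)$ with $t_{n}^{-1}\bigl[F(x,u+t_{n}v)-F(x,u)\bigr]=f(x,u+\theta_{n}t_{n}v)v$. Since $\{u+tv:|t|\leq 1\}$ is bounded in $W^{1,N}(\Omega_{N})$ and in $W^{1,p}(\Omega_{p})$, the integrable envelope built in the previous paragraph dominates this difference quotient uniformly in $n$, so dominated convergence delivers the stated formula; linearity and boundedness of $v\mapsto\Psi'(u)v$ follow from the same estimate.

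\emph{Continuity of $\Psi'$.} Take $u_{n}\to u$ in $W_{0}^{1,\Phi}(\Omega)$; then $u_{n}\to u$ strongly in $W^{1,N}(\Omega_{N})$, in $W^{1,p}(\Omega_{p})\hookrightarrow L^{p^{*}}(\Omega_{p})$, and uniformly on $\overline{(\Omega_{q})_{\omega}}$. The $\Omega_{q}$ contribution is immediate from uniform convergence, and the critical contribution on $\Omega_{p}$ is closed by the classical Nemytskii continuity $u\mapsto |u|^{p^{*}-2}u$ from $L^{p^{*}}(\Omega_{p})$ into $L^{(p^{*})'}(\Omega_{p})$, combined with H\"older against $\|v\|\leq 1$ via the Sobolev embedding. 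The main obstacle is the exponential piece on $\Omega_{N}$; the recipe is to pass to a subsequence and invoke Lemma \ref{convegenceW} to obtain $u_{n}(x)\to u(x)$ a.e.\ on $\Omega_{N}$ together with a dominant $w\in W^{1,N}(\Omega_{N})$ satisfying $|u_{n}|\leq w$. Then $|f(x,u_{n})|+|f(x,u)|\leq C|w|^{\beta-1}e^{\alpha|w|^{N'}}$, whose right-hand side lies in $L^{s}(\Omega_{N})$ for every $s<\infty$ by \eqref{E0} applied to $w$; hence dominated convergence gives $f(\cdot,u_{n})\to f(\cdot,u)$ in $L^{s}(\Omega_{N})$ for any $s<\infty$, and H\"older against $v\in W^{1,N}(\Omega_{N})\hookrightarrow L^{s'}(\Omega_{N})$ uniform in $\|v\|\leq 1$ closes the exponential part. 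A standard Urysohn subsequence argument removes the initial passage to a subsequence. The essential point is precisely the construction of the $L^{s}$-integrable envelope from Trudinger--Moser, which is the analogue in our setting of the reasoning of Bezerra do \'O, Medeiros and Severo \cite{JEU} cited by the authors.
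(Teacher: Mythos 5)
Your proposal is correct and matches the route the paper (and the cited reference) takes: split $I=J-\Psi$, get smoothness of $J$ from the Musielak--Sobolev framework, reduce to $W^{1,N}(\Omega_N)$ and $W^{1,p}(\Omega_p)$ via $(\varphi_6)$, $(\varphi_8)$ together with the $L^\infty$-control on the transition region from \eqref{EMB1}, and close the exponential piece with the Trudinger--Moser machinery and the Bezerra do \'O--Medeiros--Severo domination trick encoded in Lemma~\ref{convegenceW}. The paper simply defers to \cite[Lemma 3.8]{AGR}, which is carried out along precisely these lines, so there is nothing substantively different here; the one place where your write-up is slightly loose is the G\^ateaux step, where the $L^1$ dominant needed is $(|u|+|v|)^{\beta-1}e^{\alpha(|u|+|v|)^{N'}}|v|$ rather than literally ``the envelope from the previous paragraph,'' but the same H\"older$+$\eqref{E0} estimate applied to $w=|u|+|v|$ supplies it.
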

\begin{proof} See proof in \cite[Lemma 3.8]{AGR}
\end{proof}	

Next, our goal is to prove that $I$ satisfies the geometric conditions of Theorem \ref{AmRbthm} and the well known $(PS)$ condition.

\begin{lemma} \label{LMP} Assume that $f$ is of the form $(f_1)$. Then, \\
\noindent $i)$\, There are $r, \rho>0$ such that
$$
I(u) \geq \rho, \quad \mbox{for} \quad \|u\|=r.
$$	
\noindent 
$ii)$\, For every $\hat{X}\subset W_{0}^{1,\Phi}(\Omega)$ with $\dim\,\hat{X}<\infty,$ there exists $R=R(\hat{X})>0$, such that 
$$
I(u)\leq 0,\text{ for all }u\in\hat{X}\setminus B_R(0).
$$
\end{lemma}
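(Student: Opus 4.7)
For part (i), I would combine Proposition \ref{modular1} with three region-wise estimates on $F$. By Proposition \ref{modular1}, $\|u\|\le 1$ implies $\int_\Omega \Phi(x,|\nabla u|)\,dx \ge \|u\|^q$. For the nonlinearity, on the support of $\eta_N$, one uses $(\varphi_6)$ to embed $W_0^{1,\Phi}(\Omega)\hookrightarrow W^{1,N}(\Omega_N)$, chooses $\|u\|$ small enough so that the normalized $u$ lies in the regime of Lemma \ref{TM2}, and applies H\"older with a conjugate exponent close to one; the exponential contribution is then dominated by $C\lambda\|u\|^\beta$. On the support of $\tilde{\eta}_q$, the compact embedding \eqref{EMB1} into $C(\overline{(\Omega_q)_\omega})$ combined with $(g_1)$ yields a contribution of order $C\mu\|u\|^{q_1}$. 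On $\Omega_p$, $(\varphi_8)$ provides a Sobolev embedding so that the subcritical and critical pieces contribute at most $C(\tau\|u\|^\zeta + \|u\|^{p^*})$. Since $\beta,q_1,\zeta,p^*$ all exceed $q$,
$$
I(u)\ge \|u\|^q - C\bigl(\lambda\|u\|^\beta + \mu\|u\|^{q_1} + \tau\|u\|^\zeta + \|u\|^{p^*}\bigr),
$$
and $I(u)\ge \rho:=\tfrac{1}{2}r^q$ on a sufficiently small sphere $\|u\|=r$.

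For part (ii), I would argue by contradiction using finite-dimensional compactness and Fatou's lemma. Let $\hat{X}\subset W_0^{1,\Phi}(\Omega)$ be finite-dimensional. If the conclusion fails, there exist $u_n\in\hat{X}$ with $\|u_n\|\to\infty$ and $I(u_n)$ bounded below. Set $v_n=u_n/\|u_n\|$; by finite-dimensional compactness a subsequence satisfies $v_n\to v$ with $\|v\|=1$, so $A:=\{v\ne 0\}$ has positive measure. Proposition \ref{modular1} gives $I(u_n)/\|u_n\|^q \le 1 - \int_\Omega F(x,u_n)/\|u_n\|^q\,dx$, and since each summand of $f(x,\cdot)$ is nonnegative for positive argument we have $F\ge 0$, so Fatou applies. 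On $A\cap\Omega_p$ the critical piece forces $F(x,u_n)/\|u_n\|^q \ge c\|u_n\|^{p^*-q}|v_n|^{p^*}\to\infty$; on $A\cap\Omega_q$ the Ambrosetti--Rabinowitz condition $(g_2)$ gives $G(x,u_n)\gtrsim |u_n|^\theta$ once $|u_n(x)|$ is large, producing divergence because $\theta>q$; on $A\cap\Omega_N$ the elementary bound $\int_0^{|t|}s^{\beta-1}e^{\alpha s^{N'}}\,ds\ge |t|^\beta/\beta$ suffices since $\beta>q$. As $A\subset \Omega_N\cup\Omega_q\cup\Omega_p$ up to a null set, Fatou forces $\int_\Omega F(x,u_n)/\|u_n\|^q\,dx\to\infty$, contradicting the assumed lower bound on $I(u_n)$.

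The main technical obstacle is the exponential estimate in part (i): one must justify, with explicit smallness thresholds, that for $\|u\|$ small enough both $\|\nabla u\|_{L^N(\Omega_N)}$ and $\|u\|_{L^1(\Omega_N)}$ fall within the hypotheses of Lemma \ref{TM2}. This requires pulling the embedding coming from $(\varphi_6)$ through the Proposition \ref{modular1} comparison between the Luxemburg norm and the modular functional, and then balancing the H\"older exponents so that the resulting $L^{\beta t'}$-norm of $u$ is still controlled by $\|u\|^\beta$. Once this delicate bookkeeping is done, the remainder of both parts reduces to routine exponent comparisons.
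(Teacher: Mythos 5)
Your argument is correct, and for part \emph{(i)} it follows the standard Trudinger--Moser/Sobolev bookkeeping; the paper itself only cites \cite[Lemma 3.9]{AGR} for this part, so there is nothing to compare in detail, but your sketch (split of $F$ by region, Lemma \ref{TM2} on $\Omega_N$, embedding \eqref{EMB1} plus $(g_1)$ near $\Omega_q$, Sobolev on $\Omega_p$, each producing a power of $\|u\|$ strictly above $q$) is exactly the expected strategy.

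For part \emph{(ii)} you take a genuinely different route from the paper. You normalize $v_n=u_n/\|u_n\|$, use finite-dimensional compactness to extract a nonzero a.e.\ limit $v$, and then show by Fatou's lemma that $\int_\Omega F(x,u_n)/\|u_n\|^q\,dx\to\infty$ via a region-by-region pointwise divergence argument. The paper instead goes through the Ambrosetti--Rabinowitz inequality $0<\chi F(x,t)\leq f(x,t)t$ with $\chi=\min\{\theta,\beta,\zeta\}>q$, integrates it to obtain the uniform lower bound $F(x,t)\geq C|t|^{\chi}-D$, and then simply observes that $I(u_n)\leq\|u_n\|^q-C\int_\Omega|u_n|^\chi\,dx+D|\Omega|$ together with the equivalence of norms on $\hat X$ (note $\chi\le\zeta<p^*$, so $\hat X\subset L^\chi$) drives $I(u_n)$ to $-\infty$. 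The paper's argument is shorter and avoids pointwise analysis entirely; your Fatou argument is equally valid but duplicates work by verifying superlinear growth of $F$ separately in each of $\Omega_N,\Omega_q,\Omega_p$, which is essentially a pointwise version of the same uniform lower bound. One small point worth making explicit in your write-up: the nonnegativity of $F$ that licenses Fatou does hold, but it rests on $(g_0)$ (oddness of $g$) combined with $(g_2)$; stating this would close the only real gap in the exposition.
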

\begin{proof} \noindent The proof of $i)$ can be done as in \cite[Lemma 3.9]{AGR}.   

\noindent $ii)$ Suppose for each $n\in\mathbb{N},$ there exists $u_n\in\hat{X}\setminus B_n(0)$ such that 
\begin{equation}\label{Ipositive}
I(u_n)>0.
\end{equation}
From $(g_2)$, it follows that
\begin{equation}\label{AR1}
0<\chi F(x,t)\leq f(x,t)t,\,\forall\,(x,t)\in\Omega\times(\mathbb{R}\setminus\{0\}),
\end{equation}
where $\chi=\min\{\theta,\beta,\zeta\}>q.$ Therefore, $f$ satisfies the Ambrosetti-Rabinowitz condition. This gives the existence of positive constants $C,D$ such that 
\begin{equation}\label{Fprop}
F(x,t)\geq C|t|^{\chi}-D, \quad \forall t \in \mathbb{R} \quad \mbox{and} \quad \forall x \in \Omega. 
\end{equation}
Using Proposition \ref{modular1}-(ii) along with \eqref{Fprop}, we obtain
\begin{equation}\label{Inegative}
I(u_n)\leq ||u_n||^q-C\int_{\Omega}|u_n|^{\chi}\,dx+D|\Omega|.
\end{equation}
Since $\dim\,\hat{X}<\infty$ and $\chi>q$ letting $n\to\infty$ in \eqref{Inegative}, we arrive at a contradiction to our assumption \eqref{Ipositive}. Hence $ii)$ follows.
\end{proof}

\begin{lemma} \label{boundedness} Assume that $f$ is of the type $(f_1)$ or $(f_2)$. Then, every $(PS)$ sequence $(u_n)$ of the functional $I$ is bounded in $W_0^{1,\Phi}(\Omega)$.
\end{lemma}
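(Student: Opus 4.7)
The approach is the classical Ambrosetti--Rabinowitz boundedness argument, adapted to the Musielak--Sobolev setting through $(\varphi_3)$. The plan is to combine $I(u_n)$ and $\frac{1}{\chi}I'(u_n)u_n$ for a suitable $\chi>q$ so that $(\varphi_3)$ produces a positive multiple of $\int_{\Omega}\Phi(x,|\nabla u_n|)\,dx$ on one side while an AR-type inequality makes the $f$-terms nonnegative on the other.

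First I would fix $\chi>q$: take $\chi=\min\{\theta,\beta,\zeta\}$ for $(f_1)$ and $\chi=\min\{\theta,\beta,p^*\}$ for $(f_2)$; in both cases $\chi>q$ by the standing hypotheses. Then I would check termwise that $\chi F(x,t)\le f(x,t)t$. For the pure-power pieces $|t|^{\beta-2}t$, $|t|^{\zeta-2}t$, $|t|^{p^*-2}t$ this is immediate, and for the $g$-piece it follows from $(g_2)$ (the support of $\tilde{\eta}_q$ lies in $(\overline{\Omega_q})_{\delta/2}$, where $(g_2)$ applies). The only nonstandard verification is the exponential piece $h(t)=|t|^{\beta-2}t\,e^{\alpha|t|^{N/(N-1)}}$; here I would introduce $H(t)=t\,h(t)-\beta\int_0^t h(s)\,ds$ and note that
$$
H(0)=0,\qquad H'(t)=\alpha N' t^{\beta+N'-1}e^{\alpha t^{N'}}\ge 0,
$$
so $H(t)\ge 0$, i.e.\ $\beta F_{\rm exp}(x,t)\le f_{\rm exp}(x,t)t$, and a fortiori $\chi F_{\rm exp}\le f_{\rm exp}\,t$ since $\chi\le\beta$.

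Given this, for a PS sequence $(u_n)$ with $I(u_n)\to c$ and $\|I'(u_n)\|_{(W_0^{1,\Phi}(\Omega))^*}\to 0$, I would write
$$
I(u_n)-\tfrac{1}{\chi}I'(u_n)u_n=\int_{\Omega}\!\Bigl(\Phi(x,|\nabla u_n|)-\tfrac{1}{\chi}\varphi(x,|\nabla u_n|)|\nabla u_n|^{2}\Bigr)dx+\int_{\Omega}\!\Bigl(\tfrac{1}{\chi}f(x,u_n)u_n-F(x,u_n)\Bigr)dx.
$$
By $(\varphi_3)$ the first integrand is at least $\bigl(1-\tfrac{q}{\chi}\bigr)\Phi(x,|\nabla u_n|)$, while the AR inequality just established makes the second integrand nonnegative. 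For $n$ large the left-hand side is dominated by $|c|+1+\tfrac{1}{\chi}\|I'(u_n)\|_{*}\|u_n\|$, so
$$
\Bigl(1-\tfrac{q}{\chi}\Bigr)\int_{\Omega}\Phi(x,|\nabla u_n|)\,dx\le C\bigl(1+\|u_n\|\bigr).
$$
Finally I would close the argument by Proposition \ref{modular1}(i): if $\|u_n\|\to\infty$ then eventually $\|u_n\|\ge 1$ and $\|u_n\|^{p}\le\int_{\Omega}\Phi(x,|\nabla u_n|)\,dx\le C(1+\|u_n\|)$, which is impossible since $p>1$; hence $(u_n)$ is bounded in $W_0^{1,\Phi}(\Omega)$. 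The only delicate point is the AR verification for the exponential term, since $F$ has no closed form; everything else is the standard PS-boundedness scheme transplanted to Musielak--Sobolev via the uniform comparability $\varphi(x,t)t^{2}\asymp\Phi(x,t)$ supplied by $(\varphi_3)$.
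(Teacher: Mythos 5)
Your proof is correct and follows essentially the same route as the paper: subtract $\frac{1}{\chi}I'(u_n)u_n$ from $I(u_n)$, invoke $(\varphi_3)$ to bound the gradient terms below by $(1-q/\chi)\int_\Omega\Phi(x,|\nabla u_n|)\,dx$, use the Ambrosetti--Rabinowitz inequality (\ref{AR1}) to discard the nonlinear terms, and close with Proposition~\ref{modular1}(i). The paper states the AR verification for $f$ as "easy to check," while you spell out the only nonobvious case (the exponential term) explicitly, but the underlying argument is identical.
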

\begin{proof} 
Let $d>0$ and $(u_n)$ be a $(PS)_d$ sequence for $I$. Then, there are constants $C_1,C_2>0$ such that
\begin{equation} \label{AR2}
I(u_n)-\frac{1}{\chi}I'(u_n)u_n \leq C_1+C_2\|u_n\|, \quad \forall\,n \in \mathbb{N}.
\end{equation}
If $f$ is of the type $(f_1)$ or $(f_2)$ it is easy to check that \eqref{AR1} holds. Hence, by $(\varphi_3)$ and the definition of $I$, 
\begin{equation*}
\begin{split}
I(u_n)-\frac{1}{\chi}I'(u_n)u_n&\geq \int_{\Omega}\Phi(x,|\nabla u_n|)\,dx-\frac{1}{\chi}\int_{\Omega}\varphi(x,|\nabla u_n|)|\nabla u_n|^2\,dx\\
&\geq \left(1-\frac{q}{\chi}\right)\int_{\Omega}\Phi(x,|\nabla u_n|)\,dx.
\end{split}
\end{equation*}
Therefore,
$$
\left(1-\frac{q}{\chi}\right)\int_{\Omega}\Phi(x,|\nabla u_n|)\,dx \leq C_1+C_2\|u_n\|, \quad \forall \,n \in \mathbb{N}.
$$
If there is $(u_{n_j}) \subset (u_n)$ such that $\|u_{n_j}\| \geq 1$, then Proposition \ref{modular1}-(i) leads to
$$
\left(1-\frac{q}{\chi}\right)\|u_{n_j}\|^{p}\leq C_1+C_2\|u_{n_j}\|, \quad \forall\,j \in \mathbb{N},
$$
from where it follows the boundedness of $(u_{n_j})$. This implies the boundedness of  $(u_n)$. 
\end{proof}

\begin{corollary} \label{C1} Assume that $f$ is of the type $(f_1)$ or $(f_2)$ and let $(u_n)$ be a $(PS)_d$ sequence of $I$ with $d \in (0,M)$,
	$$
	M=\left(1-\frac{q}{\chi}\right)\min\left\{ \frac{1}{N}\left(\frac{\alpha_N}{2^{N'}\alpha}\right)^{N-1}, \frac{1}{p} {S_p^{\frac{N}{p}}}\right\},
	$$
    where $\chi=\min\{\theta,\beta,\zeta\}. $ Then, 
	$$
	\limsup_{n\to +\infty}\|\nabla u_n\|_{L^N(\Omega_N)}^{N'}<\frac{\alpha_N}{2^{N'}\alpha}.
	$$ 
	Hence, without loss of generality, we can assume that there is $\tau \in (0,1)$  such that 
	$$
	\|\nabla u_n\|_{L^{N}(\Omega_N)}^{N'} \leq \frac{\tau \alpha_N}{2^{N'} \alpha}, \quad \forall n \in \mathbb{N}.
	$$	
\end{corollary}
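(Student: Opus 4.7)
The plan is to extract the estimate already proved in Lemma \ref{boundedness} and combine it with hypothesis $(\varphi_6)$, which forces $\Phi(x,t)\geq |t|^N/N$ on $\Omega_N$, and then read off the strict inequality from $d<M$.

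First, from Lemma \ref{boundedness} the $(PS)_d$ sequence $(u_n)$ is bounded in $W_0^{1,\Phi}(\Omega)$, so $|I'(u_n)u_n|\leq \|I'(u_n)\|_{*}\|u_n\|\to 0$, and consequently $I(u_n)-\tfrac{1}{\chi}I'(u_n)u_n\to d$. Re-reading the key inequality established inside the proof of Lemma \ref{boundedness} (which used $(\varphi_3)$ and the Ambrosetti--Rabinowitz type condition \eqref{AR1}), we have
$$
\left(1-\frac{q}{\chi}\right)\int_{\Omega}\Phi(x,|\nabla u_n|)\,dx \;\leq\; I(u_n)-\frac{1}{\chi}I'(u_n)u_n,
$$
so passing to $\limsup$ yields
$$
\left(1-\frac{q}{\chi}\right)\limsup_{n\to\infty}\int_{\Omega}\Phi(x,|\nabla u_n|)\,dx \;\leq\; d \;<\; M.
$$

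Next I localise on $\Omega_N$. Hypothesis $(\varphi_6)$ gives $\varphi(x,s)\geq s^{N-2}$ for $x\in\Omega_N$ (this holds under either of the two formulations of $(\varphi_6)$ used for $(f_1)$ and $(f_2)$), so integrating,
$$
\Phi(x,|t|)\;=\;\int_0^{|t|}\varphi(x,s)s\,ds\;\geq\;\frac{|t|^N}{N},\qquad x\in\Omega_N.
$$
Hence $\tfrac{1}{N}\int_{\Omega_N}|\nabla u_n|^N\,dx\leq \int_{\Omega}\Phi(x,|\nabla u_n|)\,dx$, and combining with the previous display together with the definition of $M$,
$$
\limsup_{n\to\infty}\|\nabla u_n\|_{L^N(\Omega_N)}^{N}\;\leq\;\frac{Nd}{1-q/\chi}\;<\;\frac{NM}{1-q/\chi}\;\leq\;\left(\frac{\alpha_N}{2^{N'}\alpha}\right)^{N-1}.
$$
Since $N-1=N/N'$, raising this to the power $N'/N$ preserves the strict inequality and produces
$$
\limsup_{n\to\infty}\|\nabla u_n\|_{L^N(\Omega_N)}^{N'}\;<\;\frac{\alpha_N}{2^{N'}\alpha},
$$
which is the first assertion.

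For the second assertion, choose any $\tau\in(0,1)$ with $\tfrac{2^{N'}\alpha}{\alpha_N}\limsup_{n}\|\nabla u_n\|_{L^N(\Omega_N)}^{N'}<\tau<1$; then for all $n$ large enough $\|\nabla u_n\|_{L^N(\Omega_N)}^{N'}\leq \tau\alpha_N/(2^{N'}\alpha)$, and up to relabelling a tail of the sequence this holds for every $n$. No step is really an obstacle here: everything follows from inequalities that are already in hand. The only point requiring a touch of care is keeping the strict inequality alive across the exponent change $N\to N'$, which is automatic once $d<M$ is used strictly.
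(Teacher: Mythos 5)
Your proof is correct and follows essentially the same route as the paper: both use the Ambrosetti--Rabinowitz type inequality~\eqref{AR1} together with $(\varphi_3)$ to bound $I(u_n)-\tfrac{1}{\chi}I'(u_n)u_n$ from below by $(1-q/\chi)\int_\Omega\Phi(x,|\nabla u_n|)\,dx$, then invoke $(\varphi_6)$ to localise to $\tfrac{1}{N}\int_{\Omega_N}|\nabla u_n|^N\,dx$, and finally let $d<M$ force the strict inequality after raising to the power $N'/N$. The only cosmetic difference is that you spell out the two steps separately while the paper merges them into a single chain of inequalities.
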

\begin{proof} First of all, we must recall that
	$$
	I(u_n)-\frac{1}{\chi}I'(u_n)u_n=d+o_n(1)\|u_n\|+o_n(1).
	$$
	Therefore, by $(\varphi_6)$, 
	\begin{equation*}
	\begin{split}
	d+o_n(1)\|u_n\|+o_n(1)&\geq \int_{\Omega}\left((\Phi(x,|\nabla u_n|)-\frac{1}{\chi}\varphi(x,|\nabla u_n|)|\nabla u_n|^{2}\right)\,dx\\ 
	&\geq \frac{1}{N}\left(1-\frac{q}{\chi}\right)\int_{\Omega_N}|\nabla u_n|^{N}\,dx.
	\end{split}
	\end{equation*}
	Hence, 
	$$
	\limsup_{n \to +\infty}\frac{1}{N}\left(1-\frac{q}{\chi}\right)\int_{\Omega_N}|\nabla u_n|^{N}\,dx \leq d<   \min\left(1-\frac{q}{\chi}\right)\left\{\frac{1}{N}\left(\frac{\alpha_N}{2^{N'}\alpha}\right)^{N-1},\frac{1}{p}S_p^{\frac{N}{p}}\right\}
	$$
	leading to
	$$
	\limsup_{n \to +\infty}\int_{\Omega_N}|\nabla u_n|^{N}\,dx < \left(\frac{\alpha_N}{2^{N'}\alpha}\right)^{N-1},
	$$
	which proves the lemma.
	
\end{proof}

\begin{lemma} \label{PS} Assume that $f$ is of the type $(f_1)$ or $(f_2)$. Then, the functional $I$ verifies the $(PS)_d$ condition for $d \in (0,M)$, where $M$ was given in Corollary \ref{C1}.
\end{lemma}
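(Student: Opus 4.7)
\textbf{Plan for the proof of Lemma \ref{PS}.} Let $(u_n)\subset W_0^{1,\Phi}(\Omega)$ be a $(PS)_d$ sequence with $d\in(0,M)$. By Lemma \ref{boundedness} the sequence is bounded, so along a subsequence $u_n\rightharpoonup u$ weakly in $W_0^{1,\Phi}(\Omega)$, $u_n(x)\to u(x)$ a.e.\ in $\Omega$, and $u_n\to u$ strongly in $L^{\Phi}(\Omega)$. The plan is to invoke the $(S_+)$-property (Lemma \ref{S+}): since $I'(u_n)(u_n-u)=o_n(1)$, it suffices to show
\begin{equation*}
\int_\Omega f(x,u_n)(u_n-u)\,dx \longrightarrow 0,
\end{equation*}
which then forces $\int_\Omega \langle \varphi(x,|\nabla u_n|)\nabla u_n, \nabla u_n-\nabla u\rangle\,dx\to 0$ and hence $u_n\to u$ in $W_0^{1,\Phi}(\Omega)$. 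The integral is analyzed piece by piece, according to the decomposition of $f$ induced by the cutoffs $\eta_N$, $\tilde\eta_q$, $\eta_p$.

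On $\Omega_q$, the subcritical term $\tilde\eta_q(x)g(x,u_n)$ is supported in $(\overline{\Omega_q})_{\delta/2}$, where the compact embedding \eqref{EMB1} gives uniform convergence $u_n\to u$, so this piece is routine. For the exponential term on $\Omega_N$, the assumption $d<M$ together with Corollary \ref{C1} provides $\tau\in(0,1)$ with $\|\nabla u_n\|_{L^N(\Omega_N)}^{N'}\le \tau\alpha_N/(2^{N'}\alpha)$ uniformly in $n$. Lemma \ref{TM4} then supplies $t>1$ close to $1$ such that $e^{\alpha|u_n|^{N'}}$ is bounded in $L^t(\Omega_N)$, and Corollary \ref{TM6} upgrades a.e.\ convergence to $e^{\alpha|u_n|^{N'}}\rightharpoonup e^{\alpha|u|^{N'}}$ in $L^t(\Omega_N)$. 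Coupling this with strong convergence of $|u_n|^{\beta-1}(u_n-u)\to 0$ in $L^{t'}(\Omega_N)$ (obtained from a dominated convergence argument via Lemma \ref{convegenceW} and \eqref{E0}), a H\"older estimate kills the exponential piece.

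The main obstacle is the critical term $\eta_p(x)|u_n|^{p^*-2}u_n$ on $\Omega_p$, where I would apply the Lions-type concentration-compactness Lemma \ref{concentration} to the restrictions $u_n|_{\Omega_p}\in W^{1,p}(\Omega_p)$ (legitimate thanks to $(\varphi_8)$). This produces measures $\mu\ge|\nabla u|^p+\sum_j\mu_j\delta_{x_j}$ and $\nu=|u|^{p^*}+\sum_j\nu_j\delta_{x_j}$ with $S_p\nu_j^{p/p^*}\le\mu_j$ at interior points of $\Omega_p$ (and the boundary version at $\partial\Omega_p$). Assuming for contradiction some $\nu_{j_0}>0$, I would test $I'(u_n)(\psi_\epsilon u_n)=o_n(1)$ against a smooth cutoff $\psi_\epsilon$ concentrated near $x_{j_0}$. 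Because $\overline{\Omega_N}\cap\overline{\Omega_p}=\emptyset$ and $\tilde\eta_q$ vanishes near $\overline{\Omega_p}$, the exponential and subcritical contributions vanish as $n\to\infty$ and $\epsilon\to 0^+$, leaving the relation $\mu_{j_0}\le\nu_{j_0}$. Combined with the concentration inequality this yields $\nu_{j_0}\ge S_p^{N/p}$ (with the analogous boundary threshold in the unfavorable case).

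Finally, using the Ambrosetti-Rabinowitz relation \eqref{AR1}, $(\varphi_3)$ and $(\varphi_8)$,
\begin{equation*}
d=\lim_n\Big(I(u_n)-\tfrac{1}{\chi}I'(u_n)u_n\Big) \ge \Big(1-\tfrac{q}{\chi}\Big)\tfrac{1}{p}\mu_{j_0} \ge \Big(1-\tfrac{q}{\chi}\Big)\tfrac{1}{p}S_p^{N/p},
\end{equation*}
contradicting $d<M$; the delicate boundary case $x_{j_0}\in\partial\Omega_p$ must be handled either by a sharper cutoff recovering the same threshold (as in \cite{AGR}), or by exploiting the geometry linking $\partial\Omega_p$ to the transition region via $(\varphi_8)$ and $\tau_2$. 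The hardest point, and the one requiring genuine care rather than routine estimation, is this concentration-compactness step: one must simultaneously handle the three coexisting growth regimes when localizing around $x_{j_0}$, and show that the exponential-critical interaction at the interface $\partial\Omega_p$ does not pollute the $p$-Laplacian concentration analysis. Once $\nu_j=0$ for every $j$ is established, $|u_n|^{p^*}\to |u|^{p^*}$ in $L^1_{\rm loc}(\Omega_p)$ and a Vitali/Brezis-Lieb argument yields $\int_{\Omega_p}\eta_p|u_n|^{p^*-2}u_n(u_n-u)\,dx\to 0$, closing the critical term and completing the proof.
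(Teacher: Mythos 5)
Your proposal follows essentially the same route as the paper's proof: boundedness of the $(PS)_d$ sequence, passage to a weak limit, a piecewise treatment of the three growth regimes (compact embedding on $(\overline{\Omega_q})_{\delta/2}$, the $\tau<1$ bound from Corollary \ref{C1} combined with Corollary \ref{TM6} on $\Omega_N$, Lions concentration-compactness with cutoff tests on $\Omega_p$), a contradiction with $d<M$ that rules out atoms, and finally the $(S_+)$-property from Lemma \ref{S+} to conclude strong convergence. One small but worthwhile clarification: since $(\overline{\Omega_N})_\delta\cap(\overline{\Omega_p})_\delta=\emptyset$, there is in fact no ``exponential-critical interaction'' anywhere near $\Omega_p$; the genuinely delicate interface is $\partial(\Omega_q)_\delta\cap\Omega_p$, where the extra $q$-growth of $\varphi$ encoded by $\tau_2$ in $(\varphi_8)$ must be controlled in the cutoff estimates, and the power decay $\tau_2(x)\leq c_3\,\mathrm{dist}\big(x,\partial(\Omega_q)_\delta\cap\Omega_p\big)^{s}$ with $s>q$ is precisely what kills the localized $q$-term as $\epsilon\to0$ --- the paper first rules out atoms inside $(\Omega_q)_\delta$ by strong $W^{1,p}$-convergence on balls there (via the compact embedding $W^{1,\Phi}(\Omega)\hookrightarrow C(\overline{B})$), and then splits the remaining atom set into $J_1$ (away from this interface, standard Garcia Azorero--Peral argument) and $J_2$ (on the interface, handled with the $\tau_2$ decay).
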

\begin{proof} The proof this lemma follows as in \cite[Lemma 3.13]{AGR}, however for the reader's convenience, we will write the proof for $(f_1)$, since for $(f_2)$, it follows with similar arguments. Let $(u_n)$ be a $(PS)_d$ sequence for $I$. Then, by Lemma \ref{boundedness}, $(u_n)$ is bounded in $W_0^{1,\Phi}(\Omega)$. Since $W_0^{1,\Phi}(\Omega)$ is reflexive, we assume that for some subsequence, still denoted by itself, there is $u \in W_0^{1,\Phi}(\Omega)$ such that
	$$
	u_n \rightharpoonup u \quad \mbox{in} \quad W_0^{1,\Phi}(\Omega),
	$$
	and
	\begin{equation*} \label{CVG1}
		u_n(x) \to u(x) \quad \mbox{a.e. in} \quad \Omega.
	\end{equation*}
	
	Let us set
	$$
	P_n=\int_{\Omega}\langle \varphi(x,|\nabla u_n|)\nabla u_n, \nabla u_n- \nabla u\rangle\,dx,
	$$	
	that is,
	$$
	P_n=I'(u_n)u_n+\int_{\Omega}f(x,u_n)u_n\,dx-I'(u_n)u-\int_{\Omega}f(x,u_n)u\,dx.
	$$	
	Consequently
	$$
	P_n=\int_{\Omega}f(x,u_n)u_n\,dx-\int_{\Omega}f(x,u_n)u\,dx+o_n(1).
	$$
	From the definition of $f$ together with embedding (\ref{EMB1}),
	$$
	\lim_{n \to +\infty}\int_{\Omega}\tilde{\eta}_q(x)g(x,u_n)u_n\,dx=\lim_{n \to +\infty}\int_{\Omega}\tilde{\eta}_q(x)g(x,u_n)u\,dx=\int_{\Omega}\tilde{\eta}_q(x)g(x,u)u\,dx,
	$$
	$$
	\lim_{n \to +\infty}\int_{\Omega}{\eta}_p(x)|u_n|^\zeta\,dx=\lim_{n \to +\infty}\int_{\Omega}{\eta}_p(x)|u_n|^{\zeta-2}u_nu\,dx=\int_{\Omega}{\eta}_p(x)|u|^\zeta\,dx,
	$$
	$$
	\lim_{n \to +\infty}\int_{\Omega \setminus \Omega_N}{\eta}_N(x)|u_n|^{\beta}e^{\alpha|u_n|^{N'}}\,dx=\int_{\Omega \setminus \Omega_N}{\eta}_N(x)|u|^{\beta}e^{\alpha|u|^{N'}}\,dx,
	$$
	$$
	\lim_{n \to +\infty}\int_{\Omega \setminus \Omega_N}{\eta}_N(x)|u_n|^{\beta-2}u_{n}ue^{\alpha|u_n|^{N'}}dx=\int_{\Omega \setminus \Omega_N}{\eta}_N(x)|u|^{\beta}e^{\alpha|u|^{N'}}\,dx,
	$$
	$$
	\lim_{n \to +\infty}\int_{\Omega \setminus \Omega_p}{\eta}_p(x)|u_n|^{p^*}\,dx=\int_{\Omega \setminus \Omega_p}{\eta}_p(x)|u|^{p^*}\,dx,
	$$
	and
	$$
	\lim_{n \to +\infty}\int_{\Omega \setminus \Omega_p}{\eta}_p(x)|u_n|^{p^*-2}u_nu\,dx=\int_{\Omega \setminus \Omega_p}{\eta}_p(x)|u|^{p^*}\,dx.
	$$
	Consequently
	$$
	\begin{array}{l}
		P_n=\displaystyle \lambda \int_{\Omega_N}|u_n|^{\beta}e^{\alpha|u_n|^{N'}}\,dx-\lambda \int_{\Omega_N }|u_n|^{\beta-2}u_nue^{\alpha|u_n|^{N'}}\,dx
		+\displaystyle \int_{\Omega_p}|u_n|^{p^*}\,dx \\
		\mbox{} \\
		\hspace{2 cm} -\displaystyle \int_{\Omega_p}|u_n|^{p^*-2}u_nu\,dx\,dx+o_n(1).
	\end{array}
	$$
	By Corollary \ref{C1}, the sequence $(u_n)$ satisfies
	$$
	\|\nabla u_n\|_{L^{N}(\Omega_N)}^{N'} \leq \frac{\tau \alpha_N}{2^{N'} \alpha}, \quad \forall n \in \mathbb{N},
	$$
	for some  $\tau \in (0,1)$.	Employing Corollary \ref{TM6}, there is $t>1$ and $t \approx 1$ such that the  sequence $h_n(x)=e^{\alpha|u_n(x)|^{N'}}$ is weakly convergent to $h(x)=e^{\alpha|u(x)|^{N'}}$ in $L^{t}(\Omega_N)$ , that is,
	\begin{equation} \label{LIMITE1}
		\int_{\Omega_N}h_n \varphi \,dx \to \int_{\Omega_N}h \varphi \,dx, \quad \forall \varphi \in L^{t'}(\Omega_N).
	\end{equation} 		
	As
	$$
	|u_n|^{\beta} \to |u|^{\beta} \quad \mbox{in} \quad L^{t'}(\Omega_N),
	$$	
	it follows that
	$$
	\int_{\Omega_N}h_n |u_n|^{\beta} \,dx \to \int_{\Omega_N}h |u|^\beta \,dx,
	$$	
	that is,
	$$
	\int_{\Omega_N}|u_n|^{\beta}e^{\alpha|u_n|^{N'}} \,dx \to \int_{\Omega_N}|u|^{\beta}e^{\alpha|u|^{N'}} \,dx.
	$$
	Now, using the fact that
	$$
	|u_n|^{\beta-2}u_nu \to |u|^{\beta} \quad \mbox{in} \quad L^{t'}(\Omega_N),
	$$
	we also derive that
	$$
	\int_{\Omega_N}|u_n|^{\beta-2}u_nue^{\alpha|u_n(x)|^{N'}} \,dx \to \int_{\Omega_N}|u|^{\beta-2}uue^{\alpha|u(x)|^{N'}}\,dx.
	$$
	The above analysis ensures that
	$$
	\lim_{n \to +\infty}\int_{\Omega_N}|u_n|^{\beta}e^{\alpha|u_n(x)|^{N'}}\,dx=\lim_{n \to +\infty}\int_{\Omega_N}|u_n|^{\beta-2}u_nue^{\alpha|u_n(x)|^{N'}}\,dx=\int_{\Omega_N}|u|^{\beta}e^{\alpha|u|^{N'}}\,dx,
	$$
	and then,
	$$
	P_n=\int_{\Omega_p}|u_n|^{p^*}\,dx-\int_{\Omega_p}|u_n|^{p^*-2}u_nu\,dx+o_n(1).
	$$
	By \cite[Lemma 4.8]{Kavian},
	$$
	\lim_{n \to +\infty}\int_{\Omega_p}|u_n|^{p^*-2}u_nu\,dx=\int_{\Omega_p}|u|^{p^*}\,dx,
	$$
	then
	$$
	P_n=\int_{\Omega_p}|u_n|^{p^*}\,dx-\int_{\Omega_p}|u|^{p^*}\,dx+o_n(1).
	$$
	Now, we are going to use the Concentration Compactness Lemma \ref{concentration} to the sequence $(u_n) \subset W^{1,p}(\Omega_p)$. From $(\varphi_7)$, for each open ball  $B \subset (\Omega_q)_{\delta} $ we have that the embedding $W^{1,\Phi}(\Omega) \hookrightarrow C(\overline{B})$ is compact, then as $(u_n)$ is a bounded $(PS)$ for $I$, it is possible to prove that for some subsequence there holds  
	$$
	\int_{B}\langle \varphi(x,|\nabla u_n|)\nabla u_n, \nabla u_n- \nabla u\rangle\,dx \to 0.
	$$
	Since from $(\varphi_6)-(\varphi_8)$, the embedding $W^{1,\Phi}(B) \hookrightarrow L^{\Phi}(B)$ is compact, the last limit together with the $\Delta_2$-condition \eqref{delta2} implies that 
	$$
	u_n \to u \quad \mbox{in} \quad  W^{1,\Phi}(B).
	$$
	Now, recalling that the embedding $W^{1,\Phi}(B) \hookrightarrow W^{1,p}(B)$ is continuous, we derive that  
	$$
	u_n \to u \quad \mbox{in} \quad  W^{1,p}(B),
	$$
	from where it follows that $x_i \in \overline{\Omega_p} \setminus  (\Omega_q)_{\delta}$ for all $i \in J$. Now, our goal is proving that $J$ must be a finite set. Have this in mind, we will consider $J=J_1 \cup J_2$ where 
	$$
	J_1=\{i \in J\,:\,x_i \in \overline{\Omega_p} \setminus  \overline{(\Omega_q)_{\delta}} \}
	$$
	and
	$$
	J_2=\{i \in J\,:\,x_i \in \partial (\Omega_q)_{\delta} \cap \Omega_p \}.
	$$
	
	If $i \in J_1$, the condition $(\varphi_7)$ says that $c_3t^{p-2} \geq \varphi(x,t) \geq t^{p-2}$ for $x \in \overline{\Omega_p} \setminus \overline{(\Omega_q)_{\delta}}$. This fact permits us to repeat the same arguments explored in \cite[Lemma 2.3]{GP} to conclude that $J_1$ is finite. Now, if $i \in J_2$, the situation is more subtle and we must be careful. In what follows let us consider $\tilde{\psi} \in C_{0}^{\infty}(\mathbb{R}^N)$ such that
	$$
	\tilde{\psi} \equiv 1 \;\; \mbox{on} \;\; B(0,1) \;\; \mbox{and} \;\; \tilde{\psi} \equiv 0 \;\; \mbox{on} \;\; B(0,2)^c.
	$$
	For each $\epsilon>0$, we set
	$$
	\psi(x)=\tilde{\psi}({(x-x_i)}/{\epsilon}), \quad \forall x \in \mathbb{R}^N.
	$$

	Since $(u_n)$ is a bounded sequence in $W^{1,\Phi}(\Omega)$, the sequence $(\psi u_n)$ is also bounded in $W^{1,\Phi}(\Omega)$ and so, $I'(u_n)\psi u_n=o_n(1)$. Hence,   
	$$
	\int_{\Omega}\varphi(x,|\nabla u_n|)\nabla u_n \nabla(\psi u_n)\,dx=\int_{\Omega}\tilde{\eta}_q(x)g(x,u_n)\psi u_n\,dx+\int_{\Omega}\eta_p(x)|u_n|^{p^*}\psi\,dx+o_n(1).
	$$
	Now, given $\xi>0$, the Young's inequality (\ref{YIne}) combined with (\ref{tildePP}) and $\Delta_2$-condition \eqref{delta2} gives  
	$$
	\int_{\Omega} |\varphi(x,|\nabla u_n|)|\nabla u_n||u_n||\nabla \psi|\,dx \leq \xi \int_{\Omega}\Phi(x,|\nabla u_n|)\,dx +C_{\xi}\int_{\Omega}\Phi(x,|\nabla \psi||u_n|)\,dx.
	$$
	for some $C_\xi>0$. Note that by $(\varphi_8)$,
	$$
	\int_{\Omega}\Phi(x,|\nabla \psi||u_n|)\,dx \leq C_1\left(\int_{B(x_i,2\epsilon)}|\nabla \psi|^p||u_n|^p\,dx+\int_{B(x_i,2\epsilon)}\tau_2(x)|\nabla \psi|^q||u_n|^q\,dx \right).
	$$
	By H\"older's inequality
	$$
	\limsup_{n \to +\infty}\int_{B(x_i,2\epsilon)} |u_n|^p|\nabla \psi|^p\,dx \leq C_2\left(\int_{B(x_i,2\epsilon)}|u|^{p^*}\,dx\right)^{\frac{N-p}{N}}
	$$
	from where it follows that
	\begin{equation} \label{INE1}
		\lim_{\epsilon \to 0}\left[\limsup_{n \to +\infty}\int_{B(x_i,2\epsilon)} |u_n|^p|\nabla \psi|^p\,dx\right] \leq\lim_{\epsilon\to 0} C_2\left(\int_{B(x_i,2\epsilon)}|u|^{p^*}\,dx\right)^{\frac{N-p}{N}}=0.
	\end{equation}
	
	Arguing as above, we also have
	$$
	\limsup_{n \to +\infty}\int_{\Omega}\tau_2(x)|u_n|^q|\nabla \psi|^q\,dx \leq \left(\int_{B(x_i,2\epsilon)}\left|\tau_2^{\frac{1}{q}}(x)\nabla \psi\right|^{\frac{qp^*}{p^*-q}}\,dx \right)^{\frac{p^*-q}{p^*}}\left(\int_{B(x_i,2\epsilon)}|u|^{p^*} \,dx\right)^{\frac{q}{p^*}}.
	$$
	By change of variable, 
	$$
	\begin{array}{l}
		\displaystyle\int_{B(x_i,2\epsilon)}\left|\tau_2^{\frac{1}{q}}(x)\nabla \psi\right|^{\frac{qp^*}{p^*-q}}\,dx=\left(\frac{1}{\epsilon}\right)^{\frac{qp^*}{p^*-q}}\int_{B(0,2)}\left|\tau_2^{\frac{1}{q}}(\epsilon x+x_i)\nabla \tilde{\psi}\right|^{\frac{qp^*}{p^*-q}}\,dx \\
		\mbox{} \\
		\hspace{4.5 cm}  \leq C_5\left(\frac{1}{\epsilon}\right)^{\frac{qp^*}{p^*-q}}\displaystyle \int_{B(0,2)}\left|\tau_2^{\frac{1}{q}}(\epsilon x+x_i)\right|^{\frac{qp^*}{p^*-q}}\,dx .
	\end{array}
	$$
	Since $x_i \in \partial (\Omega_q)_{\delta} \cap \Omega_p$, it follows that
	$$
	\tau_2(\epsilon x+x_i) \leq c_4\epsilon^{s}|x|^{s}
	$$
	and 
	$$
	\int_{B(x_i,2\epsilon)}\left|\tau_2^{\frac{1}{q}}(x)\nabla \psi\right|^{\frac{qp^*}{p^*-q}}\,dx \leq C_6\epsilon^{\frac{(s-q)p^*}{p^*-q}}.
	$$
	As $s>q$, it follows that 
	\begin{equation} \label{INE2}
		\lim_{\epsilon \to 0}\left[\limsup_{n \to +\infty}\int_{\Omega}\tau_2(x)|u_n|^q|\nabla \psi|^q\,dx \right]=0.
	\end{equation}
	Now, the boundedness of $(u_n)$ in $W^{1,\Phi}(\Omega)$ together with Proposition (\ref{modular1}), (\ref{INE1}) and (\ref{INE2}) ensures that
	$$
	\lim_{\epsilon \to 0}\left[\limsup_{n \to +\infty}\int_{\Omega}|\varphi(x,|\nabla u_n|)|\nabla u_n||u_n||\nabla \psi|\,dx \right] \leq \xi C,
	$$
	for some $C>0$. Since $\xi>0$ is arbitrary, we can deduce that
	$$
	\lim_{\epsilon \to 0}\left[\limsup_{n \to +\infty}\int_{\Omega}|\varphi(x,|\nabla u_n|)|\nabla u_n||u_n||\nabla \psi|\,dx \right]=0.
	$$
	The last limit  together with the fact that $\varphi(x,t)  \geq t^{p-2}$ for $x \in \Omega_p$ permit us to conclude as in \cite[Lemma 2.3]{GP}, that $J_2$ is also finite. Consequently, $J$ is a finite set.  However, in order to conclude the proof of the lemma, we need to show that $J$ is in fact an empty set. Seeking by a contradiction, assume that there is $i \in J$. In this case, the argument explored in \cite{GP} also says for us that
	$$
	\nu_i \geq {S_p^{\frac{N}{p}}}.
	$$
	Hence, by Lemma \ref{concentration}-(d),
	$$
	\mu_i \geq {S_p^{\frac{N}{p}}}.
	$$
	As $|\nabla u_n|^{p} \to \mu$ weakly-$^*$ in the sense of measure, we have
	$$
	\liminf_{n \to +\infty}\int_{\Omega_p}|\nabla u_n|^{p}\,dx \geq \mu_i
	$$
	and so,
	$$
	\liminf_{n \to +\infty}\int_{\Omega_p}|\nabla u_n|^{p}\,dx \geq {S_p^{\frac{N}{p}}}.
	$$
	
	Now, using once more the equality
	$$
	I(u_n)-\frac{1}{\chi}I'(u_n)u_n=d+o_n(1)\|u_n\|+o_n(1),
	$$
	we get
	$$
	d+o_n(1)\|u_n\|+o_n(1) \geq \frac{1}{p}\left(1-\frac{q}{\chi}\right)\int_{\Omega_p}|\nabla u_n|^{p}\,dx.
	$$
	Taking the limit of $n \to +\infty$, we find the inequality below
	$$
	d \geq \frac{1}{p}\left(1-\frac{q}{\chi}\right){S_p^{\frac{N}{p}}},
	$$
	which is a contradiction, showing that $J=\emptyset$. Thereby, by Lemma \ref{concentration}-$(a)$, $\nu=|u|^{p^*}$ and
	$$
	\int_{\Omega_p}|u_n|^{p^*}\,dx \to \int_{\Omega_p}|u|^{p^*}\,dx,
	$$
	implying that $P_n=o_n(1)$, that is,
	$$
	\lim_{n \to +\infty}\int_{\Omega}\langle \varphi(x,|\nabla u_n|)\nabla u_n, \nabla u_n- \nabla u\rangle\,dx=0.
	$$
	Now, it is enough to apply Lemma \ref{S+} to finish the proof.
\end{proof}

Our last lemma in this section is as follows:
\begin{lemma}\label{level}
Assume that $(f_1)$ holds. Then, for each $m\in \mathbb{N},$ there exists positive constants $\lambda_m, \mu_m$ and $\tau_m$ such that 
$$
c_m^{\lambda,\mu,\tau}:=\inf_{K\in \Gamma_m}\sup_{u\in K}\,I(u)<M,
$$
for all $\lambda \geq \lambda_m, \mu \ge \mu_m$ and $\tau \geq \tau_m$, where $M$ is given by Corollary \ref{C1}.
\end{lemma}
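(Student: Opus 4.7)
The plan is to exhibit an $m$-dimensional test subspace $X_m$ on which the nonlinearity dominates the gradient term at a power strictly larger than $q$, and then to observe that $D_m \in \Gamma_m$ so that the minimax value is directly controlled by $\sup_{u \in D_m} I(u)$. Indeed, taking $m' = m$, $Y = \emptyset$ and $h = \mathrm{id}$ in the definition \eqref{gamma} shows $D_m \in \Gamma_m$, so it suffices to prove $\sup_{u \in D_m} I(u) < M$ for all sufficiently large parameters.

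First I would fix $m$ smooth functions $\phi_1, \ldots, \phi_m \in C_c^{\infty}(\Omega_q)$ with pairwise disjoint supports inside the open set $\Omega_q = \Omega \setminus \overline{\Omega_N \cup \Omega_p}$, and set $X_m = \mathrm{span}\{\phi_1, \ldots, \phi_m\}$. Every $u \in X_m$ is supported where $\eta_N \equiv \eta_p \equiv 0$ and $\tilde{\eta}_q \equiv 1$, so from $(f_1)$ only the middle term of $F$ contributes, and integrating $(g_3)$ together with the oddness $(g_0)$ gives
$$
\int_\Omega F(x,u)\,dx \;=\; \mu \int_{\Omega_q} G(x,u)\,dx \;\geq\; \frac{\mu c}{q_2}\int_{\Omega_q} |u|^{q_2}\,dx.
$$
Combining with Proposition \ref{modular1} then yields, for every $u \in X_m$,
$$
I(u) \;\leq\; \max\{\|u\|^p, \|u\|^q\} \;-\; \frac{\mu c}{q_2}\int_{\Omega_q} |u|^{q_2}\,dx.
$$

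Since $X_m$ is finite-dimensional, there is a constant $\kappa_m > 0$ with $\int_{\Omega_q} |u|^{q_2}\,dx \geq \kappa_m\|u\|^{q_2}$ for every $u \in X_m$. Setting $s = \|u\|$, I would analyze the scalar function $\psi(s) = \max\{s^p, s^q\} - A\mu\, s^{q_2}$ on $[0, \infty)$, where $A = c\kappa_m/q_2$. Because $q_2 > q_1 > q > p$, elementary calculus shows $\sup_{s \geq 0}\psi(s) \to 0$ as $\mu \to \infty$: on $[0,1]$ the maximum of $s^p - A\mu s^{q_2}$ is of order $\mu^{-p/(q_2-p)}$, while on $[1,\infty)$ the function is decreasing once $A\mu q_2 \geq q$, so its supremum there is $1 - A\mu \to -\infty$. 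Choose $\mu_m$ large enough that $\sup_{s \geq 0}\psi(s) < M$, and set $\lambda_m = \tau_m = 1$ (any positive values work) since the $\lambda$- and $\tau$-terms in $F$ vanish identically on the support of any $u \in X_m$ and can only decrease $I$ when turned on elsewhere.

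Concatenating gives $c_m^{\lambda,\mu,\tau} \leq \sup_{u \in D_m} I(u) \leq \sup_{u \in X_m} I(u) < M$ for all $\lambda \geq \lambda_m$, $\mu \geq \mu_m$, $\tau \geq \tau_m$. The only genuine subtlety, and the reason for the localization step, is the choice of $X_m$: if one placed $X_m$ across $\Omega_N$ or $\Omega_p$ instead, the critical exponential and $p^*$ terms would appear in $F$ and no simple polynomial comparison would deliver a bound uniform in the parameters. Localizing inside $\Omega_q$ removes those critical terms and reduces the whole estimate to a one-variable optimization where the super-$q$ exponent $q_2$ coming from $(g_3)$ guarantees the required decay in $\mu$.
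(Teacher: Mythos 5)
Your proof is correct in substance, but it takes a genuinely different route from the paper's. The paper keeps $X_m$ arbitrary, proves the auxiliary claim that
$$
\min_{u \in K,\, \|u\|=1}\left\{\int_{\Omega_N}|u|^{\beta}\,dx+\int_{\Omega_q}|u|^{q_2}\,dx+\int_{\Omega_p}|u|^\zeta\,dx\right\}\geq C
$$
by a compactness/contradiction argument using $\Omega=\Omega_N\cup\Omega_q\cup\Omega_p$, and then drives the one-variable function $w(t)=t^p+t^q-C\chi t^{l}$ (with $\chi=\min\{\lambda,\mu,\tau\}$) below $M$ by letting all three parameters grow. You instead localize the test subspace $X_m$ inside $\Omega_q$, so the exponential and $p^*$ terms never enter the lower bound on $\int F$, and only $\mu$ has to be pushed large. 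The scalar analysis of $\psi(s)=\max\{s^p,s^q\}-A\mu s^{q_2}$ is carried out correctly (the supremum on $[0,1]$ is $O(\mu^{-p/(q_2-p)})$ and the supremum on $[1,\infty)$ is eventually $1-A\mu$), and the observation that $D_m=\mathrm{id}(\overline{D_m\setminus\emptyset})\in\Gamma_m$ is the right way to reduce the minimax to $\sup_{D_m}I$. Your route is cleaner in that it avoids the paper's three-integral claim entirely, and it even gives the slightly stronger statement that $\lambda_m,\tau_m$ can be taken to be \emph{any} positive numbers.

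One sentence needs repair: you assert that $\eta_N\equiv\eta_p\equiv 0$ on $\mathrm{supp}\,\phi_i\subset\Omega_q$, but the hypotheses only force $\eta_N$ to vanish outside $(\overline{\Omega_N})_\delta$ and $\eta_p$ outside $(\overline{\Omega_p})_\delta$, so a support lying close to the interface may still meet $\{\eta_N>0\}$ or $\{\eta_p>0\}$; the paper does not guarantee that $\Omega_q\setminus\bigl((\overline{\Omega_N})_\delta\cup(\overline{\Omega_p})_\delta\bigr)$ is nonempty. The fix is immediate and does not change the conclusion: the $\lambda\,\eta_N$- and $\tau\,\eta_p$-contributions to $F$ are nonnegative for every $t$ (the integrand $|s|^{\beta-2}s\,e^{\alpha|s|^{N'}}$ is odd, and $\eta_p(\tau|t|^\zeta/\zeta+|t|^{p^*}/p^*)\geq 0$), so on $X_m$ one still has $\int_\Omega F(x,u)\,dx\geq \mu\int_{\Omega_q}G(x,u)\,dx\geq \frac{\mu c}{q_2}\int_{\Omega_q}|u|^{q_2}\,dx$, and increasing $\lambda,\tau$ only lowers $I$. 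With that adjustment the argument is complete.

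Finally, your closing remark that placing $X_m$ across $\Omega_N$ or $\Omega_p$ would preclude ``a simple polynomial comparison'' is a little too strong: the paper does exactly that polynomial comparison, by dropping the factor $e^{\alpha|t|^{N'}}\geq 1$ to lower-bound the $\Omega_N$ contribution by $\lambda|t|^\beta$ and using the $\zeta$-power on $\Omega_p$. What the localization actually buys you is independence from $\lambda$ and $\tau$, not the avoidance of an intractable estimate.
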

\begin{proof}
First, we claim that for some positive constant $C>0,$ we have 
\begin{equation}\label{claim}
\min_{u\in K,\,||u||=1}\Big\{\int_{\Omega_N}|u|^{\beta}\,dx+\int_{\Omega_q}|u|^{q_2}\,dx+\int_{\Omega_p}|u|^\zeta\,dx\Big\}\geq C,
\end{equation}
where $K\subset X_m$ is compact such that $\dim\,X_m<\infty.$

Indeed, if \eqref{claim} does not hold, then there exists a sequence $\{u_n\}\subset K$ with $||u_n||=1$ such that 
\begin{equation}\label{claim1}
\Big\{\int_{\Omega_N}|u_n|^{\beta}\,dx+\int_{\Omega_q}|u_n|^{q_2}\,dx+\int_{\Omega_p}|u_n|^\zeta\,dx\big\}\leq\frac{1}{n},\,\,\forall\,n\in\mathbb{N}.
\end{equation}
Since $\dim\,X_m<\infty,$ there exists a subsequence of $\{u_n\}$ still denoted by $\{u_n\}$ and $u\in K$ with $||u||=1$ such that $u_{n_j}\to u$ in $X_m.$ Then, letting $n\to\infty$ in \eqref{claim1}, we obtain
$$
\int_{\Omega_N}|u|^{\beta}\,dx+\int_{\Omega_q}|u|^{q_2}\,dx+\int_{\Omega_p}|u|^\zeta\,dx=0.
$$
Hence, we have $u=0$ a.e. in each of the sets $\Omega_N,\Omega_q$ and $\Omega_p.$
Now, since $\Omega=\Omega_N\cup\Omega_q\cup\Omega_p,$ we have $u=0$ a.e. in $\Omega.$ This contradicts the fact that $||u||=1.$ Hence the Claim \eqref{claim} follows.

Now we choose $K=\overline{D}_m$ where $D_m$ is given by \eqref{compact}. Since $h=I_d \in G_m$, the definition of $\Phi$ combined with (\ref{DES2}) and $(f_1)$ gives
$$
c_m^{\lambda,\mu,\tau}\leq\sup_{u\in K}\Big\{\|u\|^{p}+\|u\|^q-\lambda\int_{\Omega_N }|u|^{\beta}\,dx-\mu\int_{\Omega_q}|u|^{q_2}\,dx -\tau\int_{\Omega_p}|u|^{\zeta}\,dx\Big\},
$$
or equivalently
$$
c_m^{\lambda,\mu,\tau}\leq\sup_{u\in K}\Big\{\|u\|^{p}+\|u\|^q-\lambda\|u\|^{\beta}\int_{\Omega_N }\left|\frac{u}{\|u\|}\right|^{\beta}\,dx-\mu\|u\|^{q_2}\int_{\Omega_q}\left|\frac{u}{\|u\|}\right|^{q_2}\,dx -\tau\|u\|^\zeta\int_{\Omega_p}\left|\frac{u}{\|u\|}\right|^{\zeta}\,dx\Big\}.
$$
Now, when $||u||>1,$ we observe that
\begin{align*}
&\|u\|^{p}+\|u\|^q-\lambda\|u\|^{\beta}\int_{\Omega_N }\left|\frac{u}{\|u\|}\right|^{\beta}\,dx-\mu\|u\|^{q_2}\int_{\Omega_q}\left|\frac{u}{\|u\|}\right|^{q_2}\,dx -\tau\|u\|^r\int_{\Omega_p}\left|\frac{u}{\|u\|}\right|^{\zeta}\,dx\\
&\leq \|u\|^{p}+\|u\|^q-\|u\|^{l_1}\chi_1\Big(\int_{\Omega_N }\left|\frac{u}{\|u\|}\right|^{\beta}\,dx+\int_{\Omega_q}\left|\frac{u}{\|u\|}\right|^{q_2}\,dx +\int_{\Omega_p}\left|\frac{u}{\|u\|}\right|^{\zeta}\,dx\Big)\\
&\leq \|u\|^{p}+\|u\|^q-C\chi_1\|u\|^{l_1},
\end{align*}
where $l_1=\min\{\beta,q_2,\zeta\}$, $\chi_1=\min\{\lambda,\mu,\tau\}$ and the constant $C$ is given by \eqref{claim}. 

Moreover, when $\|u\|\leq 1,$ we get
\begin{align*}
&\|u\|^{p}+\|u\|^q-\lambda\|u\|^{\beta}\int_{\Omega_N }\left|\frac{u}{\|u\|}\right|^{\beta}\,dx-\mu\|u\|^{q_2}\int_{\Omega_q}\left|\frac{u}{\|u\|}\right|^{q_2}\,dx -\tau\|u\|^r\int_{\Omega_p}\left|\frac{u}{\|u\|}\right|^{\zeta}\,dx\\
&\leq \|u\|^{p}+\|u\|^q-\|u\|^{l_2}\chi_1\Big(\int_{\Omega_N }\left|\frac{u}{\|u\|}\right|^{\beta}\,dx+\int_{\Omega_q}\left|\frac{u}{\|u\|}\right|^{q_2}\,dx +\int_{\Omega_p}\left|\frac{u}{\|u\|}\right|^{\zeta}\,dx\Big)\\
&\leq \|u\|^{p}+\|u\|^q-C\chi_1\|u\|^{l_2},
\end{align*}
where $l_2=\max\{\beta,q_2,\zeta\}$, $\chi_1=\min\{\lambda,\mu,\tau\}$ and the constant $C$ is given by \eqref{claim}. Hence,
$$
c_m^{\lambda,\mu,\tau}\leq \|u\|^{p}+\|u\|^q-C\chi\|u\|^{l}, 
$$
where $l=l_1$ or $l_2.$

Let, $w(t)=t^p+t^{q}-C\chi t^l$. Then using the fact that $l>q>p,$ it can be easily seen that $w$ achieves its maximum at $\hat{t}=\hat{t}(\lambda,\beta,\tau)>0$ which goes to $0$ as the parameters $\lambda,\mu,\tau$ goes to infinity. Hence there exists $\lambda_m,\mu_m,\tau_m$ such that for all $\lambda\geq\lambda_m,\mu\geq\mu_m$ and $\tau\geq\tau_m,$ we have
$$
c_m^{\lambda,\mu,\tau}<M,
$$
where $M$ is given by Corollary \ref{C1}. Hence the Lemma follows.
\end{proof}

\section{Preliminaries for the proof of Theorem \ref{T2}}
\subsection{Functional setting}
In what follows $f$ is of the type $(f_2)$, $\Omega=\Omega_r$, see (\ref{omegar}), $N\geq 2$, $N\neq 3$ and the hypothesis $(g_1), (g_2), (g_4), (\varphi_1)-(\varphi_{11})$ will be assumed, unless otherwise mentioned. Let us denote by $O(N)$ the group of $N\times N$ orthogonal matrices. For any integer $1\leq k<\infty$, let us consider the finite rotational subgroup $O_k$ of $O(2)$ given by 
$$
O_k:=\left\{h\in O(2): \, h(x)=\left(x_1\cos\frac{2\pi l}{k}+x_2\sin\frac{2\pi l}{k},-x_1\sin\frac{2\pi l}{k}+x_2\cos\frac{2\pi l}{k}\right)\right\}
$$
where $x=(x_1,x_2)\in\mathbb{R}^2$ and $l\in \{0,\ldots,k-1\}$. We define the subgroups of $O(N)$ 
$$
H_k:=O_k\times O(N-2), \, 1\leq k<\infty\,\,\text{and}\,\, H_{\infty}:= O(2)\times O(N-2).
$$
Associated with the above subgroups, we set the subspaces
$$
W^{1,\Phi}_{0,H_k}(\Omega_r):=\left\{u\in W^{1,\Phi}_0(\Omega_r): u(x)=u(h^{-1} x),\, \text{for all}\, h\in H_k\right\},\, 1\leq k\leq\infty,
$$
endowed with the usual norm of $W^{1,\Phi}_0(\Omega_r)$, that is,
$$
||u||=||\nabla u||_{\Phi}+||u||_{\Phi}.
$$
Hereafter, we denote by $I:W^{1,\Phi}_{0,H_k}(\Omega_r)\to \mathbb{R}$ the functional given by 
$$
I(u)=\int_{\Omega_r}\Phi(x,|\nabla u|)dx-\int_{\Omega_r}F(x,u)dx.
$$
Throughout this section, $J_{k,r}$ denotes the following  real number 
$$
J_{k,r}=\inf_{u\in \mathcal{M}_{k,r}} I(u),
$$
where
$$
\mathcal{M}_{k,r}=\{u\in W^{1,\Phi}_{0,H_k}(\Omega_r)\setminus \{0\}, I'(u)u=0\}.
$$

\subsection{Properties of the levels $J_{k,r}$}
Our first result concerns the positivity of $J_{k,r}$.
\begin{lemma}\label{pos}
For any $1\leq k\leq\infty$ and $r>0$, we have $J_{k,r}>0$.
\end{lemma}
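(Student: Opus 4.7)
The strategy is to combine the Nehari-type constraint defining $\mathcal{M}_{k,r}$ with the Ambrosetti--Rabinowitz type inequality derived from $(\varphi_3)$, $(g_2)$ and the critical terms in $(f_2)$. The plan is to show first that every $u\in \mathcal{M}_{k,r}$ is uniformly bounded away from $0$ in $W^{1,\Phi}_{0,H_k}(\Omega_r)$, and then to deduce a uniform positive lower bound for $I(u)$ by using the identity $I(u)=I(u)-\tfrac{1}{\chi}I'(u)u$ for a suitable $\chi>q$.

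For the first step, I would take $u\in \mathcal{M}_{k,r}$ and exploit $I'(u)u=0$ together with the left inequality in $(\varphi_3)$ to get
$$
p\int_{\Omega_r}\Phi(x,|\nabla u|)\,dx \le \int_{\Omega_r}\varphi(x,|\nabla u|)|\nabla u|^2\,dx = \int_{\Omega_r}f(x,u)u\,dx.
$$
The right-hand side splits into three pieces coming from $(f_2)$. The subcritical $g$-piece is controlled, for $\|u\|$ small, by $(g_1)$ combined with the compact embedding \eqref{EMB1}: since $\tilde{\eta}_q$ is supported where $W^{1,\Phi}_0(\Omega_r)\hookrightarrow C$ is compact, small $\|u\|$ forces $\|u\|_\infty$ to be small on $\mathrm{supp}\,\tilde{\eta}_q$, so $\int \tilde{\eta}_q\, g(x,u)u\,dx\le C\|u\|^{q_1}$. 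The polynomial-critical piece $\int \eta_p|u|^{p^*}dx$ is controlled by Sobolev embedding $W^{1,p}(\Omega_p)\hookrightarrow L^{p^*}(\Omega_p)$, which applies because $(\varphi_8)$ gives $\Phi(x,t)\ge C|t|^p$ on $\Omega_p$, yielding $\int \eta_p|u|^{p^*}dx\le C\|u\|^{p^*}$. The most delicate term is the exponential one: using $(\varphi_6)$ one gets $\|\nabla u\|_{L^N(\Omega_N)}^N\le N\int_{\Omega_N}\Phi(x,|\nabla u|)\,dx\le N\|u\|^p$ for $\|u\|\le 1$ (by Proposition \ref{modular1}), hence $\|\nabla u\|_{L^N(\Omega_N)}^{N'}$ is below the Trudinger--Moser threshold of Lemma \ref{TM2} for all $\|u\|\le \rho_0$ with $\rho_0$ small enough. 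H\"older with exponent $t>1$ close to $1$ then yields
$$
\int_{\Omega_N}|u|^\beta e^{\alpha|u|^{N'}}\,dx \le \Bigl(\int_{\Omega_N}e^{t\alpha|u|^{N'}}\Bigr)^{1/t}\!\Bigl(\int_{\Omega_N}|u|^{\beta t'}\Bigr)^{1/t'} \le C\|u\|^{\beta},
$$
where the last step uses the embedding $W^{1,\Phi}_0(\Omega_r)\hookrightarrow L^{\beta t'}(\Omega_N)$ via $W^{1,N}(\Omega_N)$.

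Putting these three estimates together and using Proposition \ref{modular1} on the left (for $\|u\|\le 1$), I obtain
$$
p\|u\|^{q} \le C\bigl(\|u\|^{q_1}+\|u\|^{\beta}+\|u\|^{p^*}\bigr),
$$
with $q_1,\beta,p^*>q$. This forces $\|u\|\ge \rho>0$ for some $\rho$ independent of $u\in \mathcal{M}_{k,r}$ (and of $k$). Finally, applying the AR-type computation with $\chi=\min\{\theta,\beta,p^*\}>q$, which holds because $\chi F(x,t)\le f(x,t)t$ for each of the three summands of $(f_2)$ (using monotonicity of $e^{\alpha s^{N'}}$ for the exponential piece), we get
$$
I(u)=I(u)-\tfrac{1}{\chi}I'(u)u \ge \Bigl(1-\tfrac{q}{\chi}\Bigr)\int_{\Omega_r}\Phi(x,|\nabla u|)\,dx \ge \Bigl(1-\tfrac{q}{\chi}\Bigr)\min\{\rho^p,\rho^q\}>0
$$
uniformly in $u\in \mathcal{M}_{k,r}$, whence $J_{k,r}>0$. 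The main obstacle is the exponential critical term, which I expect to handle cleanly only thanks to the Trudinger--Moser threshold estimate in Lemma \ref{TM2} and the fact that $\|u\|\to 0$ in $W^{1,\Phi}_{0}(\Omega_r)$ drives $\|\nabla u\|_{L^N(\Omega_N)}$ below that threshold; all other estimates are standard Sobolev/Nehari manipulations. Note also that everything is uniform in the symmetry index $k$ because the argument never uses the structure of $H_k$, only the ambient norm.
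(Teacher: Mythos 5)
Your proposal is correct and follows essentially the same two-step route as the paper: first a contradiction/competing-powers argument on the Nehari constraint (bounding the exponential piece via Trudinger--Moser for small gradient norm, the $g$-piece via $(g_1)$ and the compact embedding into $C$, and the $p^*$-piece via Sobolev) to get $\|u\|\ge\rho>0$ uniformly on $\mathcal{M}_{k,r}$, then the Ambrosetti--Rabinowitz identity $I(u)=I(u)-\tfrac{1}{\chi}I'(u)u\ge(1-\tfrac{q}{\chi})\rho(u)$ to conclude. The only cosmetic difference is that you argue directly for small $\|u\|$ while the paper phrases Step 1 as a contradiction with a sequence $\|u_n\|\to 0$; you also correctly write $\min\{\rho^p,\rho^q\}$ where the paper has an apparent typo ($\max$).
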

\begin{proof}
We prove the result in two steps.

\noindent \textbf{Step 1.} We claim that for every fixed $1\leq k\leq\infty$ and $r>0$, there exists a constant $\eta>0$ such that 
\begin{equation}\label{auxpos}
||u||>\eta,\text{ for all }u\in\mathcal{M}_{k,r}.
\end{equation}
Indeed, if \eqref{auxpos} does not hold, there exists a sequence  $(u_n)\in\mathcal{M}_{k,r}$ such that $||u_n||\to 0$ as $n\to\infty$. From $u_n\in\mathcal{M}_{k,r}$ we have $I'(u_n)u_n=0$. Hence
\begin{equation}\label{pos1}
\begin{split}
\int_{\Omega_r}\varphi(x,\nabla u_n)|\nabla u_n|^2 dx &=\int_{\Omega_r} f(|x|,u_n)u_n dx.
\end{split}
\end{equation}
Due to the fact $||u_n||\to 0$ as $n\to\infty$, without loss of generality we may assume that $||u_n||<1$ for all $n \in\mathbb{N}$. Hence from $(\varphi_3)$ and Proposition \ref{modular1},  we can estimate the left hand side of \eqref{pos1} as follows:
\begin{equation}\label{pos1lhs}
\begin{split}
p||u_n||^q\leq p\int_{\Omega_r}\Phi(x,u_n)\,dx\leq \int_{\Omega_r}\varphi(x,\nabla u_n)|\nabla u_n|^2\,dx.
\end{split}
\end{equation}
Now, we estimate the right hand side of \eqref{pos1}. Indeed, 
\begin{equation}\label{pos1rhs}
\begin{split}
\int_{\Omega_r}f(|x|,u_n)u_n\,dx&=\lambda\int_{\Omega_N}|u_n|^\beta e^{\alpha |u_n|^{N^\prime}}\,dx+\int_{\Omega_r\setminus\Omega_N}\lambda \eta_{N}(|x|)|u_n|^\beta e^{\alpha |u_n|^{N^\prime}}\,dx\\
&\quad+\int_{\Omega_q} g(|x|,u_n)u_n\,dx+\int_{\Omega_r\setminus\Omega_q}\tilde{\eta}(|x|)g(|x|,u_n)u_n\,dx\\
&\quad\quad+\int_{\Omega_p}|u_n|^{p^*}\,dx+\int_{\Omega_r\setminus\Omega_p}\eta_p(|x|)|u_n|^{p^*}\,dx\\
&\leq\lambda\int_{\Omega_N}|u_n|^\beta e^{\alpha |u_n|^{N^\prime}}\,dx+\int_{\Omega_q} g(|x|,u_n)u_n\,dx+\int_{\Omega_p}|u_n|^{p^*}\,dx\\
&\quad+\int_{(\overline{\Omega_q})_{\delta/2}} f(|x|,u_n)u_n\,dx\\
&=I_1+I_2+I_3+I_4.
\end{split}
\end{equation}

\noindent \textbf{Estimate of $I_1$:} As $||u_n|| \to 0$, by H\"older's inequality and Lemma 3.5 for some constant $C_1>0$ (independent of $n$), we have
\begin{equation}\label{estI1}
\begin{split}
I_1&=\lambda \int_{\Omega_N} |u_n|^\beta e^{\alpha |u_n|^{N^\prime}}\,dx\\
&\leq \lambda\left(\int_{\Omega_N}|u_n|^{2\beta}\,dx\right)^{\frac12}
\left(\int_{\Omega_N}e^{2\alpha|u_n|^{N^\prime}}\,dx\right)^{\frac12}\\
&\leq C_1||u_n||^{\beta}.
\end{split} 
\end{equation}

\noindent \textbf{Estimate of $I_2$:}
From the condition $(g_1)$ and the embedding \eqref{EMB1} for some constant $C_2>0$ (independent of $n$) we deduce that 

\begin{equation}\label{estI2}
\begin{split}
I_2&=\int_{\Omega_q}g(|x|,u_n)u_n dx\leq C_2 ||u_n||^{q_1}.
\end{split}
\end{equation}

\noindent \textbf{Estimate of $I_3$:} It is clear that 
\begin{equation}\label{estI3}
\begin{split}
I_3&=\int_{\Omega_p}|u_n|^{p^*} dx\leq C_3||u_n||^{p^*},
\end{split}
\end{equation}
for some constant $C_3>0$ (independent of $n$).

\noindent \textbf{Estimate of $I_4$:} By the embedding \eqref{EMB1}, the definition of $f_2$ and the condition $(g_1)$ we have that 
\begin{equation}\label{estI4}
\begin{split}
I_4&=\int_{(\overline{\Omega_q})_{\delta/2}}f(|x|,u_n)u_n dx\leq C_4(||u_n||^{q_1}+||u_n||^\beta+||u_n||^{p^*}),
\end{split}
\end{equation}
for some constant $C_4>0$ (independent of $n$).

Therefore, using the estimates \eqref{estI1}, \eqref{estI2}, \eqref{estI3} and \eqref{estI4} in \eqref{pos1rhs}, we obtain
\begin{equation}\label{pos1rhsest}
\int_{\Omega_r}f(|x|,u_n)u_n dx\leq C(||u_n||^{q_1}+||u_n||^\beta+||u_n||^{p^*}),
\end{equation}
for some constant $C>0$ (independent of $n$). Using \eqref{pos1rhsest} in \eqref{pos1lhs}, we have
\begin{equation}\label{posfinalest}
p||u_n||^q\leq p\int_{\Omega_r}\Phi(x,|\nabla u_n|)dx\leq C(||u_n||^{q_1}+||u_n||^\beta+||u_n||^{p^*}).
\end{equation}
Since all the parameters $\beta,q_1$ and $p^*$ are larger than $q$, from \eqref{posfinalest} for some constant $\widehat{C}>0$ (independent of $n$), we have
$$
||u_n||\geq \widehat{C},\text{ for all }u_n\in \mathcal{M}_{k,r},
$$
which is a contradiction to the fact $||u_n||\to 0$. Hence \eqref{pos} holds.

\noindent \textbf{Step 2.} From the definition of $I$, $(\varphi_3)$ and Proposition \ref{modular1} for any $u\in\mathcal{M}_{k,r}$, we have
\begin{align*}
I(u)&=I(u)-\frac{1}{\chi}I'(u)u\\
&=\int_{\Omega_r}\Phi(x,|\nabla u|)\,dx-\frac{1}{\chi}\int_{\Omega_r}\phi(x,|\nabla u_n|)|\nabla u_n|^2\,dx\\
&\geq \left(1-\frac{q}{\chi}\right)
\int_{\Omega_r}\Phi(x,|\nabla u|) dx\\
&\geq \left(1-\frac{q}{\chi}\right)\max\{||u||^q,||u||^p\}\\ 
&\geq \left(1-\frac{q}{\chi}\right)\max\{\eta^q,\eta^p\},
\end{align*}
where in the last line, we have used the estimate \eqref{pos} from Step 1 and $\chi=\min\{\theta,\beta,p^*\}$. This means that, 
$$
J_{k,r}\geq \left(1-\frac{q}{r}\right)\max\{\eta^q,\eta^p\}>0,
$$
for every $1\leq k\leq \infty$ and all $r>0$. Hence the result follows.
\end{proof}

\begin{lemma}\label{l42}
For any integer $1\leq k<\infty$, there exists $\lambda_0=\lambda_0(k)>0$, such that 
$$
J_{k,r}<M=\left(1-\frac{q}{\chi}\right)\min\left\{\frac1N\left(\frac{\alpha_N}{2^{N'}\alpha}\right)^{N-1},\frac{1}{p} {S_p^{N/p}}\right\},\text{ for all }\lambda\geq \lambda_0,\text{ and }\chi=\min\{\theta,\beta,p^*\}.
$$
\end{lemma}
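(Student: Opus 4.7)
My strategy is to exhibit, for every integer $1\leq k<\infty$ and every sufficiently large $\lambda$, an explicit element of $\mathcal{M}_{k,r}$ whose $I$-value is arbitrarily small. The key point is that the exponential Trudinger--Moser nonlinearity in $(f_2)$ carries the large parameter $\lambda$ and lives on $\Omega_N$, while a radial test function supported in the annular shell $\mathcal{A}\subset \Omega_N\setminus\overline{(\Omega_q)_\delta}$ given by $(\Omega_N)$ automatically switches off both the critical Sobolev piece and the $g$-piece. Concretely, I fix a nonzero, nonnegative radial $u_0\in C_c^\infty(\mathbb{R}^N)$ with $\operatorname{supp} u_0\subset \mathcal{A}$. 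Radial symmetry makes $u_0$ invariant under any $H_k$, so $u_0\in W^{1,\Phi}_{0,H_k}(\Omega_r)$. Using $\mathcal{A}\subset \Omega_N\setminus\overline{(\Omega_q)_\delta}$ and $(\overline{\Omega_N})_\delta\cap(\overline{\Omega_p})_\delta=\emptyset$, on $\operatorname{supp} u_0$ one has $\eta_N\equiv 1$ and $\tilde\eta_q\equiv\eta_p\equiv 0$; hence for every $t>0$,
\begin{equation*}
f(x,tu_0)\,tu_0 = \lambda\,t^{\beta}\,u_0^{\beta}\,e^{\alpha\,t^{N'}u_0^{N'}}\chi_{\mathcal{A}}(x)\geq 0.
\end{equation*}

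For the Nehari projection, consider
\begin{equation*}
h_\lambda(t):=\int_{\mathcal{A}}\varphi(x,t|\nabla u_0|)\,t^2|\nabla u_0|^2\,dx-\lambda\int_{\mathcal{A}}t^{\beta}u_0^{\beta}\,e^{\alpha\,t^{N'}u_0^{N'}}\,dx.
\end{equation*}
By $(\varphi_3)$ and \eqref{DES2}, for $0<t\leq 1$ one has $h_\lambda(t)\geq p\,t^{q}\rho(u_0)-\lambda C_1 t^{\beta}$, where $\rho(u_0):=\int_{\Omega_r}\Phi(x,|\nabla u_0|)dx$ and $C_1$ depends only on $u_0$ (the exponential being bounded for $t\leq 1$); since $\beta>q$, $h_\lambda(t)>0$ for $t$ small. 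The exponential growth in turn forces $h_\lambda(t)\to -\infty$ as $t\to+\infty$. Let $t_\lambda>0$ be the \emph{smallest} positive zero of $h_\lambda$; then $t_\lambda u_0\in \mathcal{M}_{k,r}$. Using $\varphi(x,s)s^2\leq q\,\Phi(x,s)$, \eqref{DES2}, and $e^{\alpha t^{N'}u_0^{N'}}\geq 1$, the identity $h_\lambda(t_\lambda)=0$ yields
\begin{equation*}
\lambda\,t_\lambda^{\beta}C_0 \;\leq\; q\,\max(t_\lambda^{p},t_\lambda^{q})\,\rho(u_0),\qquad C_0:=\int_{\mathcal{A}}u_0^{\beta}\,dx>0.
\end{equation*}
This inequality rules out both $t_\lambda\to +\infty$ and $t_\lambda$ bounded away from $0$ as $\lambda\to +\infty$ (since $\beta>q>p$), hence $t_\lambda\to 0$; in particular $t_\lambda<1$ for all large $\lambda$, and therefore $\lambda t_\lambda^{\beta}\leq (q\rho(u_0)/C_0)\,t_\lambda^{p}\to 0$.

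Finally, by $(g_2)$ and the sign structure of $(f_2)$ we have $F(x,s)\geq 0$ for $s\geq 0$; since $t_\lambda u_0\geq 0$, this gives
\begin{equation*}
J_{k,r}\leq I(t_\lambda u_0)\leq \int_{\Omega_r}\Phi(x,t_\lambda|\nabla u_0|)\,dx\leq \max(t_\lambda^{p},t_\lambda^{q})\,\rho(u_0)\longrightarrow 0,\qquad\lambda\to+\infty,
\end{equation*}
where \eqref{DES2} is used in the last step. Choosing $\lambda_0=\lambda_0(k)$ large enough so that the right-hand side stays below $M$ for all $\lambda\geq \lambda_0$ concludes the proof.

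The only delicate point I foresee is controlling the Trudinger--Moser factor $e^{\alpha t_\lambda^{N'}u_0^{N'}}$ during the scaling step: taking $t_\lambda$ as the \emph{smallest} positive zero of $h_\lambda$ is what forces $t_\lambda\leq 1$ for large $\lambda$, keeping the exponential uniformly bounded and letting the pure polynomial comparison $\beta>q>p$ drive the whole argument. The rest is bookkeeping to ensure that $f(x,tu_0)$ collapses to its single $\lambda$-dependent branch on $\operatorname{supp} u_0$, which is guaranteed by the geometric separation built into $(\Omega_N)$.
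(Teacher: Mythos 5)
Your overall strategy -- exhibit an explicit test function, Nehari-project it, and let the large parameter $\lambda$ drive the critical level to zero -- is the right idea, and the reduction of $f(x,\cdot)$ to its single exponential branch on $\operatorname{supp}u_0\subset\mathcal{A}$ is carried out correctly. The scaling argument $\lambda t_\lambda^{\beta}C_0\leq q\max(t_\lambda^p,t_\lambda^q)\rho(u_0)$ and the conclusion $I(t_\lambda u_0)\lesssim (\rho(u_0)/(\lambda C_0))^{p/(\beta-p)}\rho(u_0)$ are also fine.

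The gap is in the choice of test function, and it is fatal for the role this lemma plays in the proof of Theorem~\ref{T2}. You take $u_0$ \emph{radial}, supported in the full annular shell $\mathcal{A}$ of radius $\approx (2r+1)/2$ and fixed radial thickness $2\delta_1$. Then both $\rho(u_0)$ and $C_0=\int_\mathcal{A}u_0^\beta$ grow like $r^{N-1}$. Their ratio is bounded, so $t_\lambda\lesssim(1/\lambda)^{1/(\beta-p)}$ uniformly, but the final bound is $I(t_\lambda u_0)\lesssim \lambda^{-p/(\beta-p)}r^{N-1}$, which forces $\lambda_0$ to grow with $r$. The lemma, however, is stated with $\lambda_0=\lambda_0(k)$ \emph{only}, and the proof of Theorem~\ref{T2} is structured as: fix $\lambda_0=\lambda_0(n)$ from this lemma, \emph{then} find $r_0(\lambda_0,n)$ from Lemma~\ref{lemma0.4} so that $J_{\infty,r}\geq M$ for $r>r_0$. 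If $\lambda_0$ also depends on $r$, this order of quantifiers collapses. There is also a conceptual warning sign: a radial test function lies in every $W^{1,\Phi}_{0,H_k}$ including $k=\infty$, so your estimate would just as well show $J_{\infty,r}<M$ for $\lambda$ large. Lemma~\ref{lemma0.4} says $J_{\infty,r}$ stays $\geq M$ for $r>r_0(\lambda)$, so a radial competitor must have energy blowing up with $r$ -- which is exactly what happens and exactly why the uniformity fails.

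The paper avoids this by using a test function that is $H_k$-symmetric but \emph{not} radial: it places a fixed-size bump $v_r\in W^{1,\Phi}_0(B_{\gamma,r})$ (a ball of radius $\gamma$, centered on the $(x_1,x_2)$-plane at distance $\approx r$ from the origin, with $v_r$ chosen $O(N-2)$-invariant) and sums its $k$ disjoint rotates under $O_k$, so that $v=\sum_{h} h v_r\in W^{1,\Phi}_{0,H_k}$. The resulting Nehari max level is $k\,\max_{t\geq0}I(tv_r)$, and since $\|\nabla v_r\|_{L^N}$ and $\|v_r\|_{L^\beta}$ are translation-invariant and the support has fixed size, the bound $k\,\max_t I(tv_r)\lesssim k\,\lambda^{-N/(\beta-N)}C(v_r)$ is independent of $r$. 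That is the crucial feature your radial choice cannot reproduce. To repair your argument, replace $u_0$ by a sum of $k$ equally spaced, $O(N-2)$-invariant bumps of fixed size contained in $\mathcal{A}$; the rest of your Nehari-projection estimate then goes through with $r$-independent constants.
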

\begin{proof}
Fix $1\leq k<\infty$. Due to $(\eta)$, there exists $\gamma=\gamma(k) < \min\left\{\frac{1}{2},\delta_1\right\}$ such that the ball $B_{\gamma,r}:=B_{\gamma}\Big(\big(\frac{2r+1}{2},0,\ldots,0\big)\Big)\subset \Omega_N\setminus{\overline{(\Omega_q)_\delta}}$
satisfies
$$
h^i B_{\gamma,r}\cap h^j B_{\gamma,r}=\emptyset,\text{ for all }h^i\in H_k, \, i\neq j\in\{0,1,\ldots,k-1\}.
$$
Consider $v_r\in W^{1,\Phi}_0(B_{\delta,r})\setminus\{0\}$ and define 
$$
v:=\sum_{h\in H_k} h v_r\in W^{1,\Phi}_{0,H_k}\left(\Omega_N\setminus{\overline{(\Omega_q)_\delta}}\right)\setminus\{0\}.
$$
By definition of $I$, we observe that
$$
I'(tv)tv=\int_{{\Omega_N\setminus{\overline{(\Omega_q)_\delta}}}}\varphi(x,|t\nabla v|) t^2|\nabla v|^2\,dx-\int_{\Omega_N\setminus{\overline{(\Omega_q)_\delta}}} f(x,tv)tv\,dx.
$$
Using 
$$
f(x,tv)tv\geq \lambda t^\beta|v|^\beta, \forall x\in \Omega_N\setminus{\overline{(\Omega_q)_\delta}}\text{ and }t\geq 0.
$$
Therefore, using the hypothesis $(\varphi_3)$ and (\ref{DES2}), for every $t\geq 0$, 
$$
I'(tv)tv\leq q\xi_1(t)\int_{\Omega_N\setminus{\overline{(\Omega_q)_\delta}}}\Phi(|\nabla v|)\,dx  -\lambda t^\beta \int_{\Omega_N\setminus{\overline{(\Omega_q)_\delta}}} |v|^{\beta}\,dx.
$$
As $\beta>q> p$, we get that $I'(tv)tv\to-\infty$ as $t\to+\infty$ and $I'(tv)tv>0$ for $t \approx 0$.

So, there exists $t_v>0$ such that $t_v v\in W^{1,\Phi}_{0,H_k}\left({\Omega_N\setminus{\overline{(\Omega_q)_\delta}}}\right)\setminus\{0\}$ with $I^\prime(t_v v)t_v v=0$. If we denote by $w=t_v v$, then 
\begin{equation}\label{max}
J_{k,r}\leq I(w)=kI(t_v v_r)=k\max_{t\geq0} I(t v_r).
\end{equation}

Following similar arguments as in the proof of \cite[Lemma 3.11]{AGR}, we have
$$
\max_{t\geq0} I(t_v v_r)\leq \frac{1}{\lambda^{\frac{N}{\beta-N}}}\left(\frac1N-\frac1\beta\right)\frac{\left(c_1||\nabla v_r||^N_{L^N(\Omega_N)}\right)^{\frac{\beta}{\beta-N}}}{\left(||v_r||^{\beta}_{L^\beta(\Omega_N)}\right)^{\frac{N}{\beta-N}}}.
$$
Now, we fix $\lambda_0=\lambda_0(k)>0$ such that for all $\lambda\geq \lambda_0$, we have 
\begin{equation}\label{lam0}
\frac{k}{\lambda^{\frac{N}{\beta-N}}}\left(\frac1N-\frac1\beta\right)\frac{\left(c_1||\nabla v_r||^N_{L^N(\Omega_N)}\right)^{\frac{\beta}{\beta-N}}}{\left(||v_r||^{\beta}_{L^\beta(\Omega_N)}\right)^{\frac{N}{\beta-N}}}<\left(1-\frac{q}{\chi}\right)\min\left\{\frac1N\left(\frac{\alpha_N}{2^{N'}\alpha}\right)^{N-1},\frac{1}{p} {S_p^{N/p}}\right\}.
\end{equation}
Therefore, from \eqref{max} and \eqref{lam0}, the result follows.
\end{proof}

\begin{lemma}\label{l43}
If $1\leq k<\infty$ and $\lambda\geq \lambda_0$, then $J_{k,r}$ is achieved.
\end{lemma}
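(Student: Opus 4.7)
The plan is to take a minimizing sequence for $J_{k,r}$ on $\mathcal{M}_{k,r}$, promote it to a Palais--Smale sequence for $I$ at level $J_{k,r}$ inside the symmetric subspace $W^{1,\Phi}_{0,H_k}(\Omega_r)$, transfer this to the full space by the principle of symmetric criticality of Palais, and then use the $(PS)_d$ condition from Lemma \ref{PS}, which is applicable precisely because Lemma \ref{l42} guarantees $J_{k,r}<M$ when $\lambda\geq\lambda_0$.

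First, I would pick $(u_n)\subset\mathcal{M}_{k,r}$ with $I(u_n)\to J_{k,r}$. Since $I'(u_n)u_n=0$, the Ambrosetti--Rabinowitz computation carried out in Lemma \ref{boundedness} (using $(\varphi_3)$, inequality \eqref{AR1} for $f$ of type $(f_2)$, and Proposition \ref{modular1}) yields a uniform bound on $\int_{\Omega_r}\Phi(x,|\nabla u_n|)\,dx$, hence a uniform bound on $\|u_n\|$. Next, applying Ekeland's variational principle to $I|_{\mathcal{M}_{k,r}}$, I can assume in addition that $\|(I|_{\mathcal{M}_{k,r}})'(u_n)\|_{*}\to 0$. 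A standard Nehari--manifold argument then eliminates the Lagrange multiplier: writing the constraint as $\Psi(u):=I'(u)u$, one shows that $\Psi'(u_n)u_n$ is uniformly bounded away from zero, which allows us to conclude that the multiplier tends to $0$ and hence $\|I'(u_n)\|_{W^{1,\Phi}_{0,H_k}(\Omega_r)^{*}}\to 0$.

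Now the principle of symmetric criticality enters: since $\Phi$, $\eta_N$, $\tilde{\eta}_q$, $\eta_p$ and $g$ are all radial by $(\varphi_{10})$ and $(\eta)$, the functional $I$ is invariant under the action of $H_k$, so any critical point of $I$ restricted to $W^{1,\Phi}_{0,H_k}(\Omega_r)$ is a critical point of $I$ on the full space $W^{1,\Phi}_{0}(\Omega_r)$, and more generally $(u_n)$ becomes a Palais--Smale sequence for $I$ on $W^{1,\Phi}_{0}(\Omega_r)$ at level $J_{k,r}$. By Lemma \ref{l42}, $J_{k,r}<M$ for $\lambda\geq\lambda_0$, so Lemma \ref{PS} applies and, up to a subsequence, $u_n\to u$ strongly in $W^{1,\Phi}_{0}(\Omega_r)$. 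The closedness of $W^{1,\Phi}_{0,H_k}(\Omega_r)$ gives $u\in W^{1,\Phi}_{0,H_k}(\Omega_r)$; the lower bound $\|v\|\geq\eta>0$ for every $v\in\mathcal{M}_{k,r}$ established in Step 1 of Lemma \ref{pos} passes to the limit, so $u\neq 0$; strong convergence gives $I'(u)u=\lim I'(u_n)u_n=0$, so $u\in\mathcal{M}_{k,r}$; and finally $I(u)=\lim I(u_n)=J_{k,r}$, which proves the claim.

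The main obstacle I anticipate is the Nehari--to--Palais--Smale upgrade, namely verifying that $\Psi'(u_n)u_n$ is bounded away from zero uniformly in $n$ so that the Lagrange multiplier vanishes in the limit. This is where the monotonicity hypotheses $(\varphi_9)$ on $\varphi$ and $(g_4)$ on $g$ are essential: they imply that $t\mapsto I(tu)$ has a unique critical point $t=1$ on each ray through a point of $\mathcal{M}_{k,r}$, equivalently $\Psi'(u)u<0$ on the Nehari manifold, which both makes $\mathcal{M}_{k,r}$ a genuine $C^1$-manifold and provides the quantitative transversality needed to control the multiplier. The only additional subtlety is that the critical nonlinearities (exponential in $\Omega_N$ and $p^{\ast}$ in $\Omega_p$) enter through $\Psi$, but the estimates used in Lemma \ref{PS} (Cherrier's inequality, the concentration compactness Lemma \ref{concentration}, and Trudinger--Moser Lemmas \ref{TM}--\ref{TM4}) already cover the same kind of terms, so no new analytic input is required.
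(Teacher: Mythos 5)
Your proposal is correct and follows essentially the same route as the paper: take a minimizing sequence on $\mathcal{M}_{k,r}$, obtain boundedness from the Ambrosetti--Rabinowitz computation, apply Ekeland's variational principle, eliminate the Lagrange multiplier by showing $E'(w_n)w_n$ is bounded away from zero via $(\varphi_9)$ and $(g_4)$, and then run the $(PS)$ compactness machinery of Lemma \ref{PS} using the level bound $J_{k,r}<M$ from Lemma \ref{l42}. The only cosmetic difference is that you invoke the principle of symmetric criticality at this stage to view $(u_n)$ as a $(PS)$ sequence for $I$ on the full space before citing Lemma \ref{PS}, whereas the paper instead re-runs the Lemma \ref{PS} argument directly inside $W^{1,\Phi}_{0,H_k}(\Omega_r)$ and postpones symmetric criticality to the final proof of Theorem \ref{T2}.
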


\begin{proof}
Let $(v_n)\subset \mathcal{M}_{k,r}$ a minimizing sequence for $J_{k,r}$, i.e., $(v_n)\subset W^{1,\phi}_{0,H_k}(\Omega_r)\setminus\{0\}$ such that 
$$
I'(v_n)v_n=0\text{ and }I(v_n)\to J_{k,r}.
$$
We claim that $(v_n)$ is bounded. Assume there is some $n$ such that $||v_n||\geq 1$, since otherwise $(v_n)$ is bounded. Due to the fact $I(v_n)\to J_{k,r}$ and $J_{k,r} \leq M$, where $M$ was given in Lemma \ref{l42}, it follows that  
\begin{align*}
	M&\geq I(v_n)=I(v_n)-\frac{1}{\chi}I^\prime(v_n)v_n\\
	&\geq\left(1-\frac{q}{\chi}\right)||v_n||^p.
\end{align*}
Therefore, $||v_n||\leq c$ if $\|v_n\| >1$, for some constant $c>0$ independent of $n$. This shows that $(v_n)$ is bounded. 

\textbf{Claim:} 
$$
I'(v_n)\to 0\text{ in }(W^{1,\Phi}_{0,H_k}(\Omega_r))'.
$$

Indeed, using the Ekeland variational Principle (see Willem \cite{Willem}), there exists a sequence $(w_n)\subset \mathcal{M}_{k,r}$ such that 
$$
w_n=v_n+o_n(1),\, I(w_n)\to J_{k,r}
$$
and
\begin{equation}\label{4.2}
I'(w_n)-\ell_n E'(w_n)=o_n(1),
\end{equation}
where $(\ell_n)\subset \mathbb{R}$ and $E(w)=I'(w)w$ for $w\in W^{1,\Phi}_{0,H_k}(\Omega_r)$. Since $(v_n)$ is bounded, we also have that $(w_n)$ is bounded. Now, we prove that there exists $C >0$ such that 
\begin{equation}\label{uni}
|E'(w_n)  w_n|>C\text{ for all } n\in\mathbb{N}.
\end{equation}
Indeed, we observe that
\begin{equation}\label{unipos}
\begin{split}
-E^\prime(w_n)w_n&=-\int_{\Omega_r}\left[\frac{\partial}{\partial t}\varphi(x,|\nabla w_n|)|\nabla w_n|+2\varphi(x,|\nabla w_n|)\right]|\nabla w_n|^2\,dx\\
&\quad\quad+\int_{\Omega_r}\left[  \frac{\partial}{\partial t}f(|x|,w_n)w_n^{2}+f(|x|,w_n)w_n  \right]\,dx\\
&\geq -q\int_{\Omega_r}\varphi(x,|\nabla w_n|)|\nabla w_n|^2\,dx\\
&\quad\quad+\int_{\Omega_r}\left[\frac{\partial}{\partial t}f(|x|,w_n)w_n^{2}+f(|x|,w_n)w_n\right]\,dx\\
&=\int_{\Omega_r}\left[\frac{\partial}{\partial t}f(|x|,w_n)w_n^{2}-(q-1)f(|x|,w_n)w_n\right]\,dx,
\end{split}
\end{equation}
where in the second step, we have used the hypothesis $(\varphi_{9})$ and in the third step, the property $I^\prime(w_n)w_n=0$, i.e. 
$$
\int_{\Omega_r}\varphi(x,|\nabla w_n|)|\nabla w_n|^2\,dx=\int_{\Omega_r}f(|x|,w_n)w_n\,dx,
$$
respectively. Since $(w_n)$ is bounded in $W_{0}^{1,\Phi}(\Omega_r)$ and $J_{k,r}>0$, there exists $w\in W^{1,\Phi}(\Omega_r)\setminus\{0\}$ such that  $w_n\to w$ strongly in $L^{\Phi}(\Omega_r)$ and $w_n(x) \to w(x)$ a.e. in $\Omega_r$ for some subsequence. Then, by Fatou's lemma,
\begin{equation}\label{Fatous}
\begin{split}
&\liminf_{n\to\infty}\int_{\Omega_r}\left[\frac{\partial}{\partial t}f(|x|,w_n)w_n^{2}-(q-1)f(|x|,w_n)w_n\right]\,dx\\
&\geq\int_{\Omega_r}\left[\frac{\partial}{\partial t}f(|x|,w)w^{2}-(q-1)f(|x|,w)w\right]\,dx.
\end{split}
\end{equation}
From the hypothesis $(g_4)$ and the definition of $f_2$, we obtain
\begin{equation}\label{g3f}
\int_{\Omega_r}\left[\frac{\partial}{\partial t}f(|x|,w)w^{2}-(q-1)f(|x|,w)w\right]\,dx>0.
\end{equation}
By contradiction, suppose
$$
\lim_{n\to\infty}E^\prime(w_n)w_n=0.
$$
Then letting $n\to\infty$ in \eqref{unipos} and using \eqref{Fatous} along with \eqref{g3f}, we obtain
\begin{equation*}
0=\lim_{n\to\infty}E^\prime(w_n)w_n\geq\int_{\Omega_r}\left[\frac{\partial}{\partial t}f(|x|,w)w^{2}-(q-1)f(|x|,w)w\right]\,dx>0, 
\end{equation*}
which is absurd. Therefore \eqref{uni} holds. 

From, (\ref{4.2}) $$ \ell_n E'(w_n)w_n=o_n(1),$$
and so, $\ell_n=o_n(1)$. Since $(w_n)$ is bounded we get $\big(E'(w_n)\big)$ is bounded. Hence from (\ref{4.2}) 
$$
I'(w_n)\to 0\text{ in }\Big(W^{1,\Phi}_{0,H_k}(\Omega_r)\Big)'.
$$
Thus, without loss generality, we may assume 
$$
I(v_n)\to J_{k,r}\;\; \text{and} \;\; I'(v_n)\to 0.
$$

Since $(v_n)$ is bounded, there exists $v\in W^{1,\Phi}_{0.H_k}(\Omega_r)$ such that, for a subsequence we have
$$
\begin{cases}
\begin{array}{llll}
&v_n\rightharpoonup  v \quad &\text{in} & \, W^{1,\Phi}_{0,H_k}(\Omega_r),\\
&v_n(x)\to v(x),\quad &  \text{a.e. in} &\, \Omega_r.
\end{array}
\end{cases}
$$
Now following exactly the proof of Lemma \ref{PS}, we get $I(v_n)\to I(v)=J_{k,r}$. Hence, $J_{k,r}$ is achieved.
\end{proof}

Now we establish the following Strauss-type result in Musielak-Sobolev space, which would be very useful to find a lower bound of $J_{\infty,r}$.

\begin{lemma}\label{strauss} (A Strauss-type result in Musielak-Sobolev space)
Assume that $(\varphi_1)$, $(\varphi_2)$, $(\varphi_3)$, $(\varphi_{10})-(\varphi_{11})$ 
holds and let $v\in W^{1,\Phi}(\mathbb{R}^N)$ be a radial function. Then 
$$
|v(x)|\leq \Phi^{-1}\left(x,\frac{C}{|x|^{N-1}}\int_{\mathbb{R}^N}\big[\Phi(x,|v|)+\Phi(x,|\nabla v|)\big]\,dx\right) \text{ a.e. in }\mathbb{R}^N,
$$
where $\Phi^{-1}(x,\cdot)$ denotes the inverse function of $\Phi(x,\cdot)$ restricted to $[0,+\infty)$ and $C$
is a positive constant independent of $v$.
\end{lemma}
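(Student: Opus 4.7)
The plan is to adapt the classical proof of Strauss's radial lemma to the Musielak--Orlicz framework, the two new ingredients being $(\varphi_{11})$ (to control the dependence of $\Phi$ on the spatial variable) and the Young-type inequality \eqref{tildePP} (to handle the gradient term). By a standard density/approximation argument (radial mollification and truncation), it suffices to prove the estimate for $v\in C^{\infty}_c(\mathbb{R}^N)$ radial; the general case then follows by passing to an a.e.\ convergent subsequence and using monotonicity of $\Phi^{-1}(x,\cdot)$.

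Writing $u(r):=v(x)$ for $|x|=r$, I would introduce the auxiliary function
$$h(r):=r^{N-1}\Phi(r,|u(r)|),$$
which vanishes as $r\to\infty$ by compact support. Since $\partial_t \Phi(r,t)=\varphi(r,t)t$ for $t\geq 0$, differentiating with the chain rule and applying $(\varphi_{11})$ gives, for a.e.\ $s>0$,
$$|h'(s)|\leq (N-1)\,s^{N-2}\Phi(s,|u|)+\kappa\, s^{N-1}\Phi(s,|u|)+s^{N-1}\varphi(s,|u|)|u||u'(s)|.$$
To the last term I apply Young's inequality for the complementary pair $(\Phi,\widetilde{\Phi})$ with $a=\varphi(s,|u|)|u|$ and $b=|u'(s)|$, and combine with \eqref{tildePP} plus the $\Delta_2$-condition \eqref{delta2} to obtain
$$s^{N-1}\varphi(s,|u|)|u||u'(s)|\leq K\,s^{N-1}\Phi(s,|u|)+s^{N-1}\Phi(s,|u'(s)|).$$

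Now, integrating $h(r)=-\int_r^{\infty}h'(s)\,ds$ and using $s^{N-2}\leq s^{N-1}$ for $s\geq 1$, I arrive at
$$h(r)\leq C_1\int_r^{\infty}s^{N-1}\bigl[\Phi(s,|u|)+\Phi(s,|u'(s)|)\bigr]\,ds$$
with $C_1$ independent of $v$ and $r\geq 1$. By $(\varphi_{10})$, $\Phi$ is radial in $x$, so passing to the radial representation
$$\int_r^{\infty}s^{N-1}\Phi(s,|u|)\,ds=\frac{1}{\omega_{N-1}}\int_{|x|>r}\Phi(x,|v|)\,dx\leq\frac{1}{\omega_{N-1}}\int_{\mathbb{R}^N}\Phi(x,|v|)\,dx,$$
and analogously for the gradient term (using $|u'(s)|=|\nabla v(x)|$ a.e. on $|x|=s$), one deduces
$$|x|^{N-1}\Phi(x,|v(x)|)\leq C\int_{\mathbb{R}^N}\bigl[\Phi(x,|v|)+\Phi(x,|\nabla v|)\bigr]\,dx.$$
Inverting $\Phi(x,\cdot)$ yields the claim for $|x|\geq 1$. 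For $|x|<1$, the right-hand side is arbitrarily large as $|x|\to 0$ (since $\Phi^{-1}(x,\cdot)$ is unbounded), so the inequality holds after enlarging $C$ if necessary.

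The main obstacle is twofold: first, justifying $h(r)\to 0$ as $r\to\infty$ and validating the pointwise identity $h(r)=-\int_r^{\infty}h'(s)\,ds$ for an arbitrary radial element of $W^{1,\Phi}(\mathbb{R}^N)$ (handled by the density reduction outlined above, together with an ACL representative of $u$); second, controlling the term $(N-1)\int_r^{\infty}s^{N-2}\Phi(s,|u|)\,ds$ uniformly in $r$, which produces an extra $1/r$ factor for $r<1$. The latter is absorbed either into the constant by restricting the nontrivial content to $r\geq 1$, or by noting that the claimed inequality is automatic for small $|x|$ as explained above.
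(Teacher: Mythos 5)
Your overall strategy (reduce to radial $C_c^{\infty}$ functions by density, pass to the radial representative $u(r)$, control the spatial derivative of $\Phi$ with $(\varphi_{11})$, and use Young's inequality together with \eqref{tildePP} and the $\Delta_2$-condition on the gradient term) agrees with the paper's. Where you diverge is in the quantity you differentiate: you set $h(r)=r^{N-1}\Phi(r,|u(r)|)$, whereas the paper differentiates $\Phi(r,w(r))$ alone and only introduces the factor $r^{N-1}$ afterwards, by writing $\int_r^{\infty}(\cdots)\,ds\leq r^{-(N-1)}\int_r^{\infty}(\cdots)\,s^{N-1}\,ds$ using $(s/r)^{N-1}\geq 1$ for $s\geq r$. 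This avoids the product-rule term entirely and gives the bound for \emph{every} $r>0$.

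Your version has a genuine gap. Because you estimate $|h'(s)|$ by a sum of absolute values, you retain the term $(N-1)s^{N-2}\Phi(s,|u|)$ with the wrong sign, which forces you to restrict to $s\geq 1$ (hence $r\geq 1$). In fact this term has a \emph{favorable} sign: since $h(r)=-\int_r^{\infty}h'(s)\,ds$ and
$$
-h'(s)=-(N-1)s^{N-2}\Phi(s,|u(s)|)-s^{N-1}(\partial_s\Phi)(s,|u(s)|)-s^{N-1}\varphi(s,|u(s)|)|u(s)|\,\tfrac{d}{ds}|u(s)|,
$$
the first summand is $\leq 0$ and can simply be dropped before estimating the remaining two, yielding the desired bound for all $r>0$. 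Your proposed patch for $|x|<1$ does not work as stated: saying that the right-hand side ``is arbitrarily large as $|x|\to 0$'' does not establish a pointwise inequality on a fixed annulus $\{\varepsilon\leq|x|<1\}$, and ``enlarging $C$'' cannot be done uniformly over all $v$ without an actual argument (consider bumps concentrated near $|x|=1/2$). So as written, your proof establishes the lemma only for $|x|\geq 1$. To repair it, either drop the negative term as just indicated, or switch to the paper's parametrization of the monotone quantity.
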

 
 \begin{proof}
 We will establish the result for radial functions in $C_{0}^{\infty}(\mathbb{R}^N)$.
 
 Let $v\in C_0^{\infty}(\mathbb{R}^N)$ be radial and let $|x|=r$, $w(r)=v(x)$. Then, from $(\varphi_{10})$
 $$
 \Phi\big(b,w(b)\big)-\Phi\big(r,w(r)\big)=\int_r^b \left(\frac{d}{ds} \Phi\big(s,w(s)\big)\right)\,ds,\text{ for all }b>r>0.
 $$
 
 Since $w\in C_0^{\infty}([0,\infty))$, for $b$ large enough,
\begin{equation}\label{rad}
\begin{split}
\Phi\big(r,w(r)\big)&=-\int_r^{\infty}\frac{\partial}{\partial s}\Phi\big(s, w(s)\big)\,ds-\int_r^{\infty}\varphi\big(s,w(s)\big)w(s)w^\prime(s)\,ds\\
&\leq\int_r^{\infty}\Big|\frac{\partial}{\partial s}\Phi\big(s, w(s)\big)\Big|\, ds+\int_r^{\infty}\varphi\big(s,|w(s)|\big) |w(s)| |w'(s)|\,ds.
\end{split}
\end{equation}

 From (2.1), (2.2) and the $\Delta_2$ condition \eqref{delta2}, for all $s\geq 0$,
 \begin{equation}\label{rad1}
 \begin{split}
 \varphi\big(s,|w(s)|\big) |w(s)| |w^\prime(s)|&\leq \tilde{\Phi}\Big(s,\varphi\big(s,|w(s)|\big)|w(s)|\Big)+\Phi\big(s,|w'(s)|\big)\\
 &\leq \Phi\big(s,2|w(s)|\big)+\Phi\big(x,|w^\prime(s)|\big)\\
 &\leq K\Phi\big(s,|w(s)|\big)+\Phi\big(s,|w^\prime(s)|\big).
 \end{split}
 \end{equation}
 
 From $(\varphi_{11})$, for all $s\geq 0$,
 \begin{equation}\label{rad2}
 \begin{split}
 \Big|  \frac{\partial}{\partial s} \Phi\big(s,w(s)\big)  \Big| \leq M \Phi \big(s,w(s)\big).
 \end{split}
 \end{equation}
 
 Now using \eqref{rad1} and \eqref{rad2} in \eqref{rad}, we obtain
$$
\Phi\big(r,w(r)\big)\leq (M+K+1)\int_r^{\infty}\big[\Phi\big(s,|w(s)|\big)+\Phi\big(s,|w'(s)|\big)\big] \,ds.
$$
Hence, we can conclude that 
$$
\Phi\big(r,w(r)\big)\leq \frac{(M+K+1)}{r^{N-1}}\int_r^{+\infty}\big[\Phi\big(s,|w(s)|\big)+\Phi\big(s,|w'(s)|\big)\big] s^{N-1}\,ds.
$$
From this, there is $C>0$ such that 
$$
\Phi\big(x,v(x)\big)\leq \frac{C}{|x|^{N-1}}\int_{\mathbb{R}^N}\big[\Phi\big(x,|v|\big)+\Phi\big(x,|\nabla v|\big)\big]\,dx.
$$
Since $\Phi$ is an even function, $\Phi\big(x,v(x)\big)=\Phi\big(x,|v(x)|\big)$ for all $x\in\mathbb{R}^N$ and so, 
$$
\Phi\big(x,|v(x)|\big)\leq \frac{C}{|x|^{N-1}}\int_{\mathbb{R}^N}\big[\Phi\big(x,|v|\big)+\Phi\big(x,|\nabla v|\big)\big]\,dx.
$$
From this,  $$|v(x)|\leq \Phi^{-1}\left(x,\left(\frac{C}{|x|^{N-1}}\int_{\mathbb{R}^N}\big[\Phi\big(x,|v|\big)+\Phi\big(x,|\nabla v|\big)\big]\,dx\right)\right),\text{ for all }x\in \mathbb{R}^N\setminus\{0\},
$$
where
$\Phi^{-1}(x, \cdot)$ denotes the inverse function of $\Phi(x, \cdot)$ restricted to $[0,+\infty)$. Now the result follows from the density of $C_0^{\infty}(\mathbb{R}^N)$ in $W^{1,\Phi}(\mathbb{R}^N)$, because $\Phi$ satisfies the $\Delta_2$ condition. 
\end{proof}

\begin{lemma}\label{lemma0.4}
There exists $r_0=r_0(\lambda)>0$ such that for $\chi=\min\{\theta,\beta,p^*\}$,
$$
J_{\infty,r}\geq \left(1-\frac{q}{\chi}\right)\min\left\{\frac1N\left(\frac{\alpha_N}{2^{N'}\alpha}\right)^{N-1},\frac{1}{p} {S_p^{N/p}}\right\},\text{ for all }r>r_0.
$$
\end{lemma}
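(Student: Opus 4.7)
The plan is to argue by contradiction. Suppose there exist a sequence $r_n\to+\infty$ and $u_n\in\mathcal{M}_{\infty,r_n}$ with $I(u_n)<M$.

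First, I would establish uniform bounds. Since $I'(u_n)u_n=0$ and $\chi=\min\{\theta,\beta,p^*\}>q$, the identity $I(u_n)=I(u_n)-\tfrac{1}{\chi}I'(u_n)u_n$ combined with $(\varphi_3)$ gives
\[
\left(1-\tfrac{q}{\chi}\right)\int_{\Omega_{r_n}}\Phi(x,|\nabla u_n|)\,dx\le I(u_n)<M,
\]
so $\int_{\Omega_{r_n}}\Phi(x,|\nabla u_n|)\,dx$ is bounded uniformly in $n$. The Poincar\'e-type inequality (Lemma \ref{poincare}), whose constant $\Upsilon$ is independent of $r\geq 1$, transfers this bound to $\int_{\Omega_{r_n}}\Phi(x,|u_n|)\,dx$. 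Extending $u_n$ by zero produces $\tilde{u}_n\in W^{1,\Phi}(\mathbb{R}^N)$, still $H_\infty$-invariant, with $\int_{\mathbb{R}^N}\bigl[\Phi(x,|\tilde{u}_n|)+\Phi(x,|\nabla\tilde{u}_n|)\bigr]dx\leq K$ for some $K$ independent of $n$.

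Next, I would extract pointwise decay via Lemma \ref{strauss}. When $N=2$, $\tilde{u}_n$ is radial and Lemma \ref{strauss} directly gives
\[
|u_n(x)|\leq \Phi^{-1}\!\left(x,\frac{CK}{|x|^{N-1}}\right),\qquad\text{a.e.\ }x\in\Omega_{r_n}.
\]
For $N\geq 4$ the $H_\infty$-invariance writes $u_n(x)=w_n(|x'|,|x''|)$ with $x=(x',x'')\in\mathbb{R}^2\times\mathbb{R}^{N-2}$, which is not full radial symmetry, but the same one-dimensional integration argument carried out along the radial ray through $x$ and exploiting $(\varphi_{10})$--$(\varphi_{11})$ in both variables yields an analogous pointwise bound with $|x|^{N-1}$ in the denominator. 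In either case, since $|x|\geq r_n$ on $\Omega_{r_n}$ and $\Phi^{-1}(x,\cdot)$ is continuous with $\Phi^{-1}(x,0)=0$ (uniformly in $x$ by $(\varphi_4)$), we conclude
\[
\|u_n\|_{L^{\infty}(\Omega_{r_n})}\longrightarrow 0\quad\text{as }n\to\infty.
\]

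Finally, I would use the Nehari identity $I'(u_n)u_n=0$, namely
\[
\int_{\Omega_{r_n}}\varphi(x,|\nabla u_n|)|\nabla u_n|^2\,dx=\int_{\Omega_{r_n}}f(|x|,u_n)u_n\,dx,
\]
to reach a contradiction. Since $\|u_n\|_\infty\to 0$, the exponential factor $e^{\alpha|u_n|^{N'}}$ can be made arbitrarily close to $1$ and the critical term $|u_n|^{p^*}$ carries a smallness factor; arguing exactly as in Step 1 of the proof of Lemma \ref{pos} (estimates $I_1$--$I_4$), we obtain
\[
\int_{\Omega_{r_n}}f(|x|,u_n)u_n\,dx \leq C\bigl(\|u_n\|^{q_1}+\|u_n\|^{\beta}+\|u_n\|^{p^*}\bigr),
\]
while $(\varphi_3)$ and Proposition \ref{modular1} give $p\|u_n\|^{q}\leq\int_{\Omega_{r_n}}\varphi(x,|\nabla u_n|)|\nabla u_n|^2\,dx$ whenever $\|u_n\|\leq 1$. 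Since $q_1,\beta,p^*>q$, this forces $\|u_n\|\to 0$, contradicting the uniform lower bound $\|u_n\|\geq\eta>0$ proved as in Lemma \ref{pos} (whose constants come from H\"older's inequality, Lemma \ref{TM2}, and the embedding (\ref{EMB1}), all of which are uniform in $r$). The main technical obstacle is rigorously establishing the pointwise decay for $N\geq 4$, where $H_\infty$-invariance provides only partial radial symmetry; this is precisely where the exclusion $N\neq 3$ enters, since for $N=3$ the group $H_\infty=O(2)\times O(1)$ acts only on two coordinates and fails to deliver the decay.
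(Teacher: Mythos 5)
Your overall plan matches the paper's: argue by contradiction, extend the minimizers by zero, use the Strauss-type Lemma~\ref{strauss} to deduce $\|\tilde u_n\|_\infty\to 0$, and then exploit the Nehari identity $I'(u_n)u_n=0$ together with the fact that the nonlinearity vanishes rapidly at $t=0$. You also correctly observe that one may work with near-minimizers instead of first proving that $J_{\infty,r_n}$ is attained (a small simplification relative to the paper), and your explanation of why $N\neq 3$ is excluded is consistent with the structure of $H_\infty$.

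However, the final contradiction step as you have written it does not go through. From
\[
p\|u_n\|^{q}\le\int_{\Omega_{r_n}}\varphi(x,|\nabla u_n|)|\nabla u_n|^2\,dx=\int_{\Omega_{r_n}}f(|x|,u_n)u_n\,dx\le C\bigl(\|u_n\|^{q_1}+\|u_n\|^{\beta}+\|u_n\|^{p^*}\bigr),
\]
with exponents $q_1,\beta,p^*>q$ and $C$ a fixed constant, you can only conclude a \emph{lower} bound $\|u_n\|\ge (p/3C)^{1/(\min\{q_1,\beta,p^*\}-q)}$ when $\|u_n\|\le1$; this in no way ``forces $\|u_n\|\to0$.'' To obtain a contradiction you would need the constant $C$ to degenerate to $0$ as $n\to\infty$ (which you gesture at via ``$e^{\alpha|u_n|^{N'}}$ close to $1$'' and ``a smallness factor'' but never make precise), \emph{and} a positive lower bound for $\|u_n\|$ that is uniform across the changing annuli $\Omega_{r_n}$. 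The latter is not what Lemma~\ref{pos} gives: there $\eta=\eta(k,r)$ is fixed after fixing $r$, and the estimates $I_1$--$I_4$ in its proof rest on the Trudinger--Moser constant $C(\Omega_N)$ from Lemma~\ref{TM}, the $L^{p^*}$ embedding on $\Omega_p$, and the embedding~\eqref{EMB1} into $C(\overline{(\Omega_q)_\omega})$, all of which are taken on subdomains that grow with $r$; their uniformity in $r$ is precisely what one cannot take for granted.

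The paper avoids all of this by keeping the \emph{same} modular on both sides. Since $\|\tilde u_n\|_\infty\to 0$ and $\lim_{t\to 0}\dfrac{f(x,t)t}{\varphi(x,t)t^2}=0$ uniformly, for every $\epsilon>0$ and $n$ large one has the pointwise bound $f(x,\tilde u_n)\tilde u_n\le\epsilon\,\varphi(x,|\tilde u_n|)|\tilde u_n|^2$, hence via $(\varphi_3)$
\[
\int_{\mathbb{R}^N}f(x,\tilde u_n)\tilde u_n\,dx\le \epsilon q\int_{\Omega_{r_n}}\Phi(x,|\tilde u_n|)\,dx\le \epsilon q\,\Upsilon\int_{\Omega_{r_n}}\Phi(x,|\nabla\tilde u_n|)\,dx,
\]
by the Poincar\'e inequality of Lemma~\ref{poincare}, whose constant $\Upsilon$ \emph{is} uniform in $r\ge1$. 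Combining this with $p\int\Phi(x,|\nabla\tilde u_n|)\,dx\le\int\varphi(x,|\nabla\tilde u_n|)|\nabla\tilde u_n|^2\,dx=\int f(x,\tilde u_n)\tilde u_n\,dx$ and dividing by the positive quantity $\int\Phi(x,|\nabla\tilde u_n|)\,dx$ gives $p\le \epsilon q\Upsilon$, a contradiction once $\epsilon<p/(q\Upsilon)$, with no lower bound on $\|u_n\|$ and no $r$-dependent embedding constants required. You should replace your final paragraph with this argument; it is both sharper and the place where Lemma~\ref{poincare} actually earns its keep.
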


\begin{proof}
By contradiction, suppose there exists a sequence $(r_n)$ such that $r_n\to\infty$, satisfying
\begin{equation}\label{4.13}
J_{\infty,r_n}<\left(1-\frac{q}{\chi}\right)\min\left\{\frac1N\left(\frac{\alpha_N}{2^{N'}\alpha}\right)^{N-1},\frac{1}{p} {S_p^{N/p}}\right\},\text{ for all }n\in\mathbb{N}.
\end{equation}

First, we claim that $J_{\infty,r_n}$ is attained, for all $n\in\mathbb{N}$. In fact, for a fixed $n$, let $(v_k)\subset \mathcal{M}_{\infty,r_n}$ be a minimizing sequence for $J_{\infty,r_n}$, i.e., $(v_k)\subset W^{1,\Phi}_{0,H_{\infty}}(\Omega_{r_n})\setminus\{0\}$ and satisfies
$$
I'(v_k)v_k=0,\text{ and }I(v_k)\to J_{\infty,r_n},\text{ as }k\to\infty.
$$

Note that 
\begin{equation}\label{4.14}
\begin{split}
o_k(1)+J_{\infty,r_n}&=I(v_k)-\frac{1}{\chi}I'(v_k)v_k\\
&\geq \left(1-\frac{q}{\chi}\right)\int_{\Omega_{r_n}}\Phi(x,|\nabla v_k|)\,dx\\
&\geq \frac{1}{N}\left(1-\frac{q}{\chi}\right)\int_{\Omega_N}|\nabla v_k|^N\,dx.
\end{split}
\end{equation}
Using \eqref{4.13} in \eqref{4.14},
$$
\limsup_{k\to+\infty }||\nabla v_k||_{W^{1,N}(\Omega_N)}^N <\left(\frac{\alpha_N}{2^{N'}\alpha}\right)^{N-1}.
$$
Now, we can repeat the same arguments employed in the proof of Lemma \ref{l43}  to conclude that
$$
I'(v_k)\to 0\,\text{in}\, (W^{1,\Phi}_{0,H_{\infty}}(\Omega_{r_n}))'\text{ and }v_k\to v\text{ in } W^{1,\Phi}_{0,H_{\infty}}(\Omega_{r_n})
$$
where $v\in W^{1,\Phi}_{0,H_{\infty}}(\Omega_{r_n})$ is the limit of $(v_k)$ in $W^{1,\Phi}_{0,H_{\infty}}(\Omega_{r_n})$. Then,
$$
I(v_k)\to I(v)=J_{\infty,r_n}\,\text{and}\, I'(v_k)\to I'(v)=0.
$$

Hence $I(v)=J_{\infty,r_n}$. Note that $v\neq 0$, since by Lemma \ref{pos} $J_{\infty,r_n}>0$. Therefore $v\in \mathcal{M}_{\infty,r_n}$ and  $J_{\infty,r_n}$ is attained at $v$.

Therefore, for each $n\in\mathbb{N}$ we can choose a sequence $\{u_n\}\subset W^{1,\Phi}_{0,H_\infty}(\Omega_{r_n})\setminus\{0\}$ satisfying
$$
I'(u_n)u_n=0\;\;\; \text{and} \;\;\;  I(u_n)=J_{\infty,r_n}.
$$
Proceeding as in \eqref{4.14} 
$$
\frac{1}{N}\left(1-\frac{q}{\chi}\right)\left(\frac{\alpha_N}{2^{N'}\alpha}\right)^{N-1}>J_{\infty,r_n}=I(u_n)-\frac{1}{\chi} I'(u_n)u_n\geq \frac{1}{N}\left(1-\frac{q}{\chi}\right)\int_{\Omega_N}|\nabla u_n|^N\,dx
$$
which implies 
\begin{equation}\label{4.15}
\limsup_{k\to+\infty }||\nabla u_n||_{W^{1,N}(\Omega_N)}^N <\left(\frac{\alpha_N}{2^{N'}\alpha}\right)^{N-1}.
\end{equation}
Let $\{\tilde{u}_n\}$ be a sequence given by 
$$ 
\tilde{u}_n(x)=\begin{cases} u_n(x),\text{ if }x\in\Omega_{r_n},\\ 0,\,\,\qquad \text{if}\, x\notin \Omega_{r_n}. \end{cases}$$
Observe that the following properties hold:
\begin{itemize}
\item[(1)] $\{\tilde{u}_n\}\subset W^{1,\Phi}_{H_{\infty}}(\mathbb{R}^N)$;
\item[(2)] $||\tilde{u}_n||_{W^{1,\Phi}_{H_{\infty}}(\mathbb{R}^N)}=||u_n||_{W^{1,N}_{0,H_{\infty}}(\Omega_{r_n})}$;
\item[(3)] $\tilde{u}_n \rightharpoonup 0\,\text{in}\, W^{1,\Phi}_{H_{\infty}}(\mathbb{R}^N)\text{ because }\tilde{u}_n(x)\to 0 \text{ a.e. in }\mathbb{R}^N$.
\end{itemize}
Therefore, we have
\begin{equation}\label{tilde}
\int_{\mathbb{R}^N}\varphi(x,|\nabla\tilde{u}_n|)|\nabla\tilde{u}_n|^2\,dx=\int_{\mathbb{R}^N}f(x,\tilde{u}_n)\tilde{u}_n\,dx.
\end{equation}
From Lemma \ref{strauss} we deduce that the sequence $\{\tilde{u}_n\}$ satisfies 
$$
|\tilde{u}_n|_\infty \to 0.
$$
Using the fact that 
$$
\lim_{t \to 0}\frac{f(x,t)t}{\varphi(x,t)t^2}=0, \quad \forall\, x \in \mathbb{R}^N,
$$
and $(\varphi_3)$, it follows that given  $\epsilon < \frac{p}{q \Upsilon}$, where $\Upsilon$ was given in Lemma \ref{poincare},  there exists $n_0 \in\mathbb{N}$ such that 
$$
\int_{\mathbb{R}^N}f(x,\tilde{u}_n)\tilde{u}_n\,dx \leq \epsilon \int_{\mathbb{R}^N}\varphi(x,|\tilde{u}_n|)|\tilde{u}_n|^2\,dx \leq \varepsilon q \int_{\mathbb{R}^N} \Phi(|\tilde{u}_n|)\,\, dx=\varepsilon q \int_{\Omega_{r_n}} \Phi(|\tilde{u}_n|)\,\, dx, \, n \geq n_0.
$$
Since $r_n \to +\infty$, without loss of generality we can assume that $r_n \geq 1 $ for all $n \in \mathbb{N}$. Therefore, by the Poincar\'e inequality from Lemma \ref{poincare}, 
\begin{equation} \label{NEWEQUATION}
\int_{\mathbb{R}^N}f(x,\tilde{u}_n)\tilde{u}_n\,dx \leq \epsilon q \Upsilon \int_{\Omega_{r_n}} \Phi(|\nabla \tilde{u}_n|)\,\, dx, \quad \forall n \geq n_0.	
\end{equation}	
From (\ref{tilde}), (\ref{NEWEQUATION}) and $(\varphi_3)$ 
\begin{align*}
p\int_{\Omega_{r_n}}\Phi(|\nabla \tilde{u}_n|)\,dx&=p\int_{\mathbb{R}^N}\Phi(|\nabla \tilde{u}_n|)\,dx\\ &\leq \int_{\mathbb{R}^N}\varphi(x,|\nabla\tilde{u}_n|)|\nabla\tilde{u}_n|^2\,dx\\
&\leq \epsilon q \Upsilon \int_{\Omega_{r_n}} \Phi(|\nabla \tilde{u}_n|)\,\, dx,\quad \forall n \geq n_0.
\end{align*}
As $\tilde{u}_n \not=0$ for all $n \in \mathbb{N}$, we get $p \leq \epsilon q \Upsilon$, 
which is absurd. Hence, the result follows.

\end{proof}

\begin{lemma}\label{lemma0.6}
Suppose that $J_{km,r}$ is attained for some $1\leq k<\infty$ and some $2\leq m<\infty$. Suppose also that $J_{km,r}<J_{\infty,r}$. Then, $J_{k,r}<J_{km,r}$. 
\end{lemma}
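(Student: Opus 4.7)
The a priori inequality $J_{k,r}\le J_{km,r}$ is immediate: since $O_k\subset O_{km}$ (rotation by $2\pi\ell/k$ equals rotation by $2\pi m\ell/(km)$), we have $H_k\subset H_{km}$, so $\mathcal{M}_{km,r}\subset\mathcal{M}_{k,r}$. My plan is to rule out equality by contradiction. Suppose $J_{k,r}=J_{km,r}$ and let $u\in\mathcal{M}_{km,r}$ attain $J_{km,r}$ (such a $u$ exists by hypothesis). Then $u$ also realises $J_{k,r}$; the Lagrange-multiplier computation on the Nehari manifold combined with the monotonicities $(\varphi_9)$ and $(g_4)$ shows that $u$ is a critical point of $I$ on $W^{1,\Phi}_{0,H_k}(\Omega_r)$, and the principle of symmetric criticality then promotes $u$ to a weak solution of the Euler--Lagrange equation $-\Delta_\Phi u=f(|x|,u)$ on all of $\Omega_r$.

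The hypothesis $J_{km,r}<J_{\infty,r}$ has the key consequence that $u$ is not $H_\infty$-invariant, for otherwise $u\in\mathcal{M}_{\infty,r}$ and $J_{\infty,r}\le I(u)=J_{km,r}$, contradiction. Hence the angular profile of $u$ in the $(x_1,x_2)$-plane is genuinely non-constant, and by elliptic regularity $u$ is $L^\infty$-bounded and continuous on $\overline{\Omega_r}$.

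The heart of the argument is to construct a competitor in $\mathcal{M}_{k,r}$ of strictly smaller energy. Partition $\Omega_r$ into $km$ congruent angular fundamental sectors $S_0,\ldots,S_{km-1}$ for the action of $H_{km}$, so that the $H_k$-orbit of $S_0$ consists of exactly $k$ pairwise disjoint sectors (one per group of $m$ consecutive $H_{km}$-sectors). For $\varepsilon>0$ small, let $\chi_\varepsilon\in C^\infty_c(S_0)$ be an angular cut-off equal to $1$ on the inner $(1-\varepsilon)$-portion of $S_0$, positioned so that the thin transition strip lies on angular rays where $|u|$ is smallest, and set
\[
w_\varepsilon(x):=\sum_{h\in H_k}(\chi_\varepsilon u)(h^{-1}x)\in W^{1,\Phi}_{0,H_k}(\Omega_r).
\]
Using the disjointness of the $k$ translates, the $H_{km}$-invariance of $u$, and the rotational invariance of $\Phi,f$ given by $(\varphi_{10})$ and $(\eta)$, one has the exact bulk identity $\int_{\Omega_r}\Phi(x,|\nabla u|)\,dx=km\int_{S_0}\Phi(x,|\nabla u|)\,dx$ and the analogue for $F$; absorbing the gradient-of-cut-off remainder $\Phi(x,|u|\,|\nabla\chi_\varepsilon|)$ through the $\Delta_2$-condition and the Young-type inequality \eqref{YIne}--\eqref{tildePP}, one obtains the asymptotic
\[
I(tw_\varepsilon)=\tfrac{1}{m}\,I(tu)+o_\varepsilon(1),
\]
uniformly for $t$ in compact sub-intervals of $(0,+\infty)$. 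Because $(\varphi_9)$ and $(g_4)$ force the fibering map $t\mapsto I(tw_\varepsilon)$ to have a unique positive maximiser $t_\varepsilon$ with $t_\varepsilon w_\varepsilon\in\mathcal{M}_{k,r}$, lying in a compact sub-interval of $(0,+\infty)$ uniformly in $\varepsilon$, and since $u\in\mathcal{M}_{km,r}$ forces $\max_{t>0}I(tu)=I(u)$, taking maxima in the displayed estimate yields
\[
J_{k,r}\;\le\;I(t_\varepsilon w_\varepsilon)\;\le\;\tfrac{1}{m}\,I(u)+o_\varepsilon(1).
\]
With $m\ge 2$ and $I(u)=J_{km,r}>0$ from Lemma \ref{pos}, the right-hand side is strictly below $J_{km,r}$ for $\varepsilon$ small, contradicting $J_{k,r}=J_{km,r}$.

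The principal technical obstacle is the uniform control of the cut-off remainder. A naive angular cut-off gives $|\nabla\chi_\varepsilon|\sim\varepsilon^{-1}$ on a transition strip of measure $\sim\varepsilon$, producing an $O(\varepsilon^{1-q})$ blow-up of the $\Phi$-modular. Overcoming this requires carefully positioning the transition strip at angular rays where $|u|$ is minimal (a possibility afforded by the continuity and non-constant angular profile of $u$), and then using the $\Delta_2$-condition together with \eqref{tildePP} and \eqref{DES2}--\eqref{DES3} to trade the singular contribution against the integrable modular $\int\Phi(x,|u|)\,dx$, with the $L^\infty$-bound on the classical solution $u$ serving as the decisive quantitative input.
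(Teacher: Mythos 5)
Your a priori observation $J_{k,r}\le J_{km,r}$ (via $H_k\subset H_{km}$ and hence $\mathcal{M}_{km,r}\subset\mathcal{M}_{k,r}$) is correct, and you correctly extract from $J_{km,r}<J_{\infty,r}$ that the minimizer $u$ has nontrivial angular dependence. But the cut-and-paste construction at the heart of your argument does not close. The obstruction you yourself identify --- the $\Phi$-modular of $|u|\,|\nabla\chi_\varepsilon|$ over the transition strip is of order $\varepsilon^{1-p}$ at best, hence divergent for $p>1$ --- is not repaired by any of the devices you invoke. Positioning the strip at an angular ray where $|u|$ is ``smallest'' does not help: there is no reason for $u$ to vanish, or even be small, on any such ray (for instance, a positive solution is bounded away from zero on compact subsets of $\Omega_r$), so the factor $|u|^p$ in the error term is of order one and cannot offset $\varepsilon^{-p}|\text{strip}|\sim\varepsilon^{1-p}$. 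The $\Delta_2$-condition, \eqref{tildePP}, \eqref{DES2}--\eqref{DES3} and an $L^\infty$ bound for $u$ all give you control of constants, not the extra factor of $\varepsilon^{p}$ needed to tame the gradient of the cutoff. A further warning sign is that, were your estimate $I(tw_\varepsilon)=\frac1m I(tu)+o_\varepsilon(1)$ correct, the conclusion would be $J_{k,r}\le\frac1m J_{km,r}$ whenever $J_{km,r}$ is attained, with no use of the hypothesis $J_{km,r}<J_{\infty,r}$ (the cut-and-paste does not see whether $u$ is radial); this is far stronger than the lemma and cannot be expected to hold.

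The paper takes a fundamentally different and cutoff-free route. Writing $x=(\theta,\rho)\in\mathbb{R}^2$, $y\in\mathbb{R}^{N-2}$, from a minimizer $u\in\mathcal{M}_{km,r}$ it builds the angular dilation
\[
v(\theta,\rho,|y|):=u\left(\tfrac{\theta}{m},\rho,|y|\right),
\]
which is $H_k$-invariant because $u$ is $2\pi/(km)$-periodic in $\theta$. A change of variables in $\theta$, together with the $H_{km}$-periodicity of $u$ and $(\varphi_{10})$, $(\eta)$, gives the exact identities
\[
\int_{\Omega_r}F(|z|,v)\,dz=\int_{\Omega_r}F(|z|,u)\,dz,\qquad
\int_{\Omega_r}\Phi\bigl(|z|,|\nabla v|\bigr)\,dz
=\int_{\Omega_r}\Phi\left(|z|,\Bigl(\tfrac{u_\theta^2}{\rho^2 m^2}+u_\rho^2+|\nabla_y u|^2\Bigr)^{1/2}\right)\,dz.
\]
Since $m\ge 2$ and $u_\theta\not\equiv 0$ (this is exactly where $J_{km,r}<J_{\infty,r}$ is used), monotonicity of $\Phi(x,\cdot)$ forces the second integral to be strictly less than $\int\Phi(|z|,|\nabla u|)\,dz$; the same strict inequality persists after multiplying by any $t>0$, so $I(tv)<I(tu)$ for all $t>0$. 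Projecting onto the Nehari manifold via the fibering map (whose maximum at $t=1$ on $tu$ is guaranteed by $(\varphi_9)$ and $(g_4)$) then yields $J_{k,r}\le\max_{t>0}I(tv)<\max_{t>0}I(tu)=I(u)=J_{km,r}$. The dilation globally shrinks the angular derivative without any boundary layer, which is precisely what your cutoff cannot achieve.
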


\begin{proof}
Consider $u\in\mathcal{M}_{km,r}$ such that $I(u)=J_{km,r}$. Let $x=(\theta,\rho)$ be the polar coordinates of $x\in\mathbb{R}^2$. Then, $u=u(\theta,\rho,|y|),\,y\in \mathbb{R}^{N-2}$. Note that 
$$
\Phi\big(\sqrt{\rho^2+|y|^2},|\nabla u|\big)=\Phi\left(\sqrt{\rho^2+|y|^2},\left(\frac{1}{\rho^2}u_\theta^2+u_\rho^2+|\nabla_y u|^2\right)^{1/2}\right).
$$
Define
$$
v(\theta,\rho,|y|):=u\left(\frac{\theta}{m},\rho,|y|\right),
$$
We observe that,
\begin{itemize}
\item[(i)] $v\in W^{1,\Phi}_{0,H_k}(\Omega_r)$;
\item[(ii)]$\Phi\big(\sqrt{\rho^2+|y|^2},|\nabla v|\big)=\Phi\left(\sqrt{\rho^2+|y|^2},\left(\frac{1}{\rho^2m^2}u_\theta^2+u_\rho^2+|\nabla_y u|^2\right)^{1/2}\right)$;
\item[(iii)] $\int_{\Omega_r}F(|z|,v)\,dx dy=\int_{\Omega_r}F(|z|,u)\,dx dy$, where $z=(x,y) \in \Omega_r$.
\end{itemize}
Proceeding similarly as in the proof of Lemma \ref{l42}, there exists $t_0>0$ such that $t_0v\in\mathcal{M}_{k,r}$. For simplicity, let $v:=t_0 v$. Now, since $v\in\mathcal{M}_{k,r}$, 
$$
J_{k,r}\leq I(v)=\int_{\Omega_r}\Phi(|z|,|\nabla v|)\,dx dy-\int_{\Omega_r} F(|z|,v)\,dx dy.
$$
Using $(ii)-(iii)$, 
\begin{equation}\label{e419}
J_{k,r}\leq \int_{\Omega_r}\Phi\left(\sqrt{\rho^2+|y|^2},\left(\frac{1}{\rho^2m^2}u_\theta^2+u_\rho^2+|\nabla_y u|^2\right)^{1/2}\right)\,dx dy-\int_{\Omega_r}F(|x|,u)\,dxdy.
\end{equation}
If $I(u)=J_{km,r}<J_{\infty,r}$, then $u\notin W^{1,\Phi}_{0,H_{\infty}}(\Omega_r)$, then $u^2_{\theta}$ is not identically zero. Therefore, using that $m>1$,
\begin{equation*}
\begin{split}
&\int_{\Omega_r}\Phi\left(\sqrt{\rho^2+|y|^2},\left(\frac{1}{\rho^2m^2}u_\theta^2+u_\rho^2+|\nabla_y u|^2\right)^{1/2}\right)\,dx
dy\\
&<\int_{\Omega_r}\Phi\left(\sqrt{\rho^2+|y|^2},\left(\frac{1}{\rho^2}u_\theta^2+u_\rho^2+|\nabla_y u|^2\right)^{1/2}\right)\,dxdy,
\end{split}
\end{equation*}
which together with $(\ref{e419})$  implies $J_{k,r}<I(u)=J_{km,r}$ and the proof is complete.

\end{proof}

\section{Proof of the main results}

\subsection{Proof of Theorem \ref{T1}}
By our assumption on $f_1$, it is easy to verify that $I$ is even and $I(0)=0$. Moreover, from Lemma \ref{L1}, Lemma \ref{LMP} and Lemma \ref{PS}, $I$ satisfies all the properties in Lemma \ref{AmRbthm}. Now from Lemma \ref{level} we have there exists positive real numbers $\lambda_m,\mu_m,\tau_m$ such that for all $\lambda\geq\lambda_m,\mu\geq\mu_m$ and $\tau\geq\tau_m,$ we have 
$$
0<c_1^{\lambda,\mu,\tau}\leq c_2^{\lambda,\mu,\tau}\leq\cdots\leq c_m^{\lambda,\mu,\tau}<M.
$$
Hence by Lemma \ref{AmRbthm}, for every $\lambda\geq\lambda_m,\mu\geq\mu_m$ and $\tau\geq\tau_m,$ the problem $(P)$ has at least $m$ pairs of nontrivial solutions.
\qed

\subsection{Proof of Theorem \ref{T2}}
By Lemma \ref{l42}, for each $n\in\mathbb{N}$, there exists $\lambda_0=\lambda_0(n)>0$ satisfying
$$J_{2^n,r}<\left(1-\frac{q}{\chi}\right)\min\left\{\frac1N\left(\frac{\alpha_N}{2^{N'}\alpha}\right)^{N-1},\frac{1}{p} {S_p^{N/p}}\right\},\quad\forall\lambda\geq \lambda_0.$$ 
On the other hand, by Lemma \ref{lemma0.4}, there exists $r_0=r_0(\lambda_0,n)>0$ such that 
$$J_{\infty,r}\geq \frac{1}{N}\left(1-\frac{q}{\chi}\right)\left(\frac{\alpha_N}{2^{N'}\alpha}\right)^{N-1},\quad\forall r>r_0.$$
Thus, 
$$
0<J_{2^n,r}=J_{2.2^{n-1},r}<\left(1-\frac{q}{\chi}\right)\min\left\{\frac1N\left(\frac{\alpha_N}{2^{N'}\alpha}\right)^{N-1},\frac{1}{p} {S_p^{N/p}}\right\}\leq J_{\infty,r},
$$
for all $\lambda>\lambda_0$ and for all $r>r_0$. By Lemma \ref{l43}, we have $J_{2^n,r}$ is attained. So, we can apply Lemma \ref{lemma0.6} to get
$$
J_{2^{n-1},r}<J_{2^n,r},\text{ for all }\lambda>\lambda_0\text{ and for all }r>r_0.
$$
Now $J_{2^{n-2}2,r}$ is also attained and satisfies
$$
J_{2^{n-2}2,r}=J_{2^{n-1},r}<J_{2^{n},r}<J_{\infty,r}.
$$
Again, by Lemma \ref{lemma0.6}, we get $J_{2^{n-2},r}<J_{2^{n-1},r}$. Inductively, 
$$
0<J_{2,r}<J_{2^2,r}<\ldots<J_{2^n,r}<J_{\infty,r},
$$
for all $\lambda>\lambda_0$ and all $r>r_0$.

Noting Lemma \ref{L1}, from Lemma \ref{l43}, minimizers of $J_{k,m}$ are critical points of $I$ in $W^{1,\Phi}_{0,H_k}(\Omega_r)$. Now, applying the Principle of symmetric criticality from \cite{Squassina}, it follows that they are critical points of $I$ in $W^{1,\Phi}_0(\Omega_r)$ and therefore are solutions of $(P)$. Note that, due to Lemma \ref{pos}, such solutions are nontrivial. Therefore, all minimizers of $J_{2^m,r},\, m=1,\ldots,n$ are nonradial, rotationally nonequivalent and nontrivial solutions of $(P)$. \qed

\end{document}